\newtheorem{Thm}{Theorem}[section]
\newtheorem{Cor}[Thm]{Corollary}
\newtheorem{Lem}[Thm]{Lemma}
\newtheorem{Pro}[Thm]{Proposition}
\newcommand{\N}{\mathbb{N}}
\newcommand{\R}{\mathbb{R}}
\numberwithin{equation}{section}
\begin{document}
	
	\title{Existence and concentration of ground state solutions for an exponentially critical Choquard equation involving mixed local-nonlocal operators}
	
	\author{Shaoxiong Chen \quad Min Yang\quad Zhipeng Yang\thanks{Corresponding author: yangzhipeng326@163.com} \\[2pt]
    \small Yunnan Key Laboratory of Modern Analytical Mathematics and Applications, Kunming, China\\
		\small Department of Mathematics, Yunnan Normal University, Kunming, China
	}
	\date{}
	\maketitle
	
	\begin{abstract}
		We study the Choquard equation involving mixed local and nonlocal operators
		\begin{equation*}
			-\varepsilon^{2}\Delta u+\varepsilon^{2s}(-\Delta)^{s}u+V(x)u
		=\varepsilon^{\mu-2}\left(\frac{1}{|x|^{\mu}}*F(u)\right)f(u)
		\quad \text{in }\R^{2},
		\end{equation*}
		where \(\varepsilon>0\), \(s\in(0,1)\), \(0<\mu<2\), \(f\) has Trudinger--Moser critical exponential growth, and \(F(t)=\int_{0}^{t}f(\tau)\,d\tau\).
		By variational methods, combined with the Trudinger--Moser inequality and compactness arguments adapted to the critical growth and the nonlocal interaction term, we prove the existence of ground state solutions and describe their concentration behavior as \(\varepsilon\to0^{+}\).
		
		\par
		\smallskip
		\noindent {\bf Keywords}: Choquard equation; Mixed local-nonlocal operators; Variational methods.
		
		\smallskip
		\noindent {\bf MSC2020}: 35R11, 35B40, 35J20.
	\end{abstract}

%\tableofcontents

\section{Introduction and the main results}

In this paper, we consider the existence and concentration of positive ground state solutions to the mixed local and nonlocal Choquard equation
\begin{equation}\label{eq1.1}
	-\varepsilon^{2} \Delta u+\varepsilon^{2s}(-\Delta)^{s} u+V(x) u
	=\varepsilon^{\mu-2}\left(\frac{1}{|x|^{\mu}} * F(u)\right) f(u)
	\quad \text{in } \R^{2},
\end{equation}
where $\varepsilon>0$, $s\in(0,1)$, $0<\mu<2$, $V:\R^{2}\to\R$ is continuous, $f:\R^{2}\to\R$ is a continuous function, and
\[
F(t)=\int_{0}^{t} f(\tau)\,d\tau.
\]
Here $\Delta$ denotes the Laplacian and $(-\Delta)^{s}$ is the fractional Laplacian defined, up to a positive normalization constant, by
\[
(-\Delta)^{s}u(x)=\operatorname{P.V.}\int_{\R^{2}} \frac{u(x)-u(y)}{|x-y|^{2+2s}}\,dy,
\]
where $\operatorname{P.V.}$ stands for the Cauchy principal value.

The operator in \eqref{eq1.1} combines a second-order local diffusion and a nonlocal diffusion of order $2s$. It is convenient to introduce the unscaled mixed operator
\[
\mathcal{L}=-\Delta+(-\Delta)^{s},
\]
as well as its semiclassical scaling
\[
\mathcal{L}_{\varepsilon}u=-\varepsilon^{2}\Delta u+\varepsilon^{2s}(-\Delta)^{s}u.
\]
The aim of this work is to study semiclassical ground states for a two-dimensional Choquard equation in which the mixed operator interacts with a critical exponential nonlinearity and a nonlocal convolution term.

In recent years, equations involving mixed local and nonlocal operators have received increasing attention. Such models arise in different applied contexts and have stimulated the development of new tools in PDE theory; see, for instance, \cite{2023BiagiJAM,2024BiagiCCM,2024DipierroAMS,2024SuMZ} and the references therein. On bounded domains, Li et al.~\cite{2022LiAM} investigated elliptic problems driven by mixed operators of the form
\[
\left\{\begin{aligned}
	-\Delta u+(-\Delta)^{s} u & =\mu g(x, u)+b(x), && x \in \Omega, \\
	u & \geq 0, && x \in \Omega, \\
	u & =0, && x \in \R^{N} \setminus \Omega,
\end{aligned}\right.
\]
where $\Omega\subset \R^{N}$ is bounded. Using the nonsmooth variational approach developed they obtained existence results under suitable assumptions on $g$ and $b$. Biagi et al.~\cite{2022BiagiPDE} developed a general framework for mixed-order elliptic operators, including existence, maximum principles, and interior and boundary regularity, and further regularity properties were derived in \cite{2022LaMaoAM}. In the whole space, Dipierro et al.~\cite{2025DipierroDCDS} studied the subcritical problem
\[
-\Delta u+(-\Delta)^{s} u+u = u^{r-1} \quad \text{in } \R^{N}, \qquad
u>0 \ \text{in } \R^{N}, \qquad u\in H^{1}(\R^{N}),
\]
with $r\in(1,2^{*})$, where $2^{*}=\frac{2N}{N-2}$ for $N\ge 3$. They proved existence and then characterized qualitative properties such as power-type decay and radial symmetry. Related results also exist for mixed models with nonsingular kernels, motivated in part by applications in animal foraging; see \cite{2019DelPezzoFCAA,2021DipierroPA}.

In parallel, Choquard-type equations have been deeply investigated. These equations originate from Hartree--Fock theory and arise in nonlinear optics and population dynamics, among other areas. In the semiclassical regime, Gao et al.~\cite{2018GaoZAMP} proved the existence and concentration of positive ground states for the fractional Schr\"odinger--Choquard equation
\[
\varepsilon^{2s}(-\Delta)^{s} u+V(x)u=\bigl(I_{\alpha}*|u|^{p}\bigr)|u|^{p-2} u
\quad \text{in } \R^{N},
\]
and Ambrosio \cite{2019AmbrosioPA} studied existence, multiplicity, and concentration phenomena for fractional Choquard equations. For the local Choquard case, Yang and Ding \cite{2013YangCPAA} considered
\[
-\varepsilon^{2} \Delta u+V(x) u=\left(\frac{1}{|x|^{\mu}} * u^{p}\right) u^{p-1}
\quad \text{in } \R^{3},
\]
with $0<\mu<3$ and $\frac{6-\mu}{3}<p<6-\mu$, and obtained solutions for small $\varepsilon$ via the Mountain Pass theorem under appropriate assumptions on $V$.

Choquard equations involving mixed operators have only recently begun to be studied systematically. Anthal \cite{2023AnthalJMAA} investigated a mixed operator Choquard problem on bounded domains with a Hardy--Littlewood--Sobolev critical exponent,
\[
\left\{\begin{array}{l}
	\mathcal{L} u=\left(\int_{\Omega} \frac{|u(y)|^{2_\mu^*}}{|x-y|^\mu}\,dy\right)|u|^{2_\mu^*-2} u+\lambda u^p \quad \text{in } \Omega, \\
	u \equiv 0 \quad \text{in } \mathbb{R}^n \backslash \Omega,\qquad u \geq 0 \quad \text{in } \Omega,
\end{array}\right.
\]
where $\Omega\subset \mathbb{R}^n$ has $C^{1,1}$ boundary, $n\ge 3$, $0<\mu<n$, $p \in\left[1,2^*-1\right)$, $2_\mu^*=\frac{2n-\mu}{n-2}$ and $2^*=\frac{2n}{n-2}$. By variational methods, the author established a mixed Hardy--Littlewood--Sobolev inequality and showed that its best constant coincides with the classical one but is not attained. Using refined energy estimates and the Pohozaev identity, the work provided existence and nonexistence results depending on the range of the parameter $\lambda$.
Kirane \cite{2025KiraneFCAA} investigated the mass decay behavior for a semilinear heat equation driven by a mixed local--nonlocal operator,
\[
\left\{\begin{array}{l}
	\partial_t u+t^\beta \mathcal{L} u=-h(t) u^p,\\
	\mathcal{L}=-\Delta+(-\Delta)^{\alpha / 2},\qquad \alpha \in(0,2),
\end{array}\right.
\]
and identified a critical exponent separating different asymptotic regimes.
Giacomoni \cite{2025GiacomoniNNDE} studied normalized solutions to a Choquard equation involving mixed operators under an $L^{2}$-constraint,
\[
\left\{\begin{array}{l}
	\mathcal{L} u+u =\mu\left(I_\alpha *|u|^p\right)|u|^{p-2} u \quad \text{in } \mathbb{R}^n, \\
	\|u\|_2^2=\tau,
\end{array}\right.
\]
where $\mathcal{L}=-\Delta+\lambda(-\Delta)^s$ with $s \in(0,1)$ and $\lambda>0$, and obtained existence, regularity, and equivalence results between normalized solutions and ground states in suitable parameter ranges. Constantin \cite{2026ConstantinJMAA} studied a doubly degenerate parabolic equation involving the mixed local--nonlocal nonlinear operator
\[
\mathcal{A}_\mu u=-\Delta_p u+\mu(-\Delta)_q^s u,
\]
and established existence, uniqueness and qualitative behavior for weak-mild solutions, including stabilization, extinction and blow-up in finite time under appropriate conditions on the nonlinearities.

More broadly, current research on mixed operators has been focusing on interior regularity and maximum principles (see, for example, \cite{2025BiswasJAM,2022XavierARMA,2024DeMA}), boundary Harnack principles \cite{2012ChenAMS}, boundary regularity and overdetermined problems \cite{2023BiswasAMPA}, qualitative properties of solutions \cite{2021BiagiESA}, existence and asymptotics (see, for example, \cite{2022SalortDIE,2022MugnaiMRCP,2022GarainNA,2023DipierroAIAN,2022DipierroAA}), and shape optimization problems \cite{2023BiagiJAM,2019GoelAMS}.

Motivated by these developments, we investigate in this paper a two-dimensional Choquard equation involving mixed operators and critical exponential growth. The central question is whether ground state solutions to \eqref{eq1.1} exist and concentrate as $\varepsilon\to0$ when both the local and nonlocal diffusions are present. The main difficulties come from the critical Trudinger--Moser regime in dimension two, the nonlocal convolution term, and the lack of compactness produced by translations in $\R^{2}$.

For the purpose of looking for positive solution, we always suppose that $f(t)=0$ for $t\leq 0$. In addition, we assume that the nonlinearity $f$ satisfies:
\begin{itemize}
	\item[($f_1$)] $f(t)=o\bigl(t^{\frac{2-\mu}{2}}\bigr)$ as $t\to0$;
	
	\item[($f_2$)] $f(t)$ has critical exponential growth at $+\infty$ in the Trudinger--Moser sense:
	\begin{equation*}
		\lim_{t\to+\infty}\frac{f(t)}{\mathrm{e}^{\alpha t^{2}}}
		=
		\begin{cases}
			0, & \forall\,\alpha>4\pi,\\[4pt]
			+\infty, & \forall\,\alpha<4\pi;
		\end{cases}
	\end{equation*}
	
	\item[($f_3$)] there exists $\theta>1$ such that
	\[
	f(t)\,t \ge \theta F(t) \ge 0
	\quad \text{for all } t>0;
	\]
	
	\item[($f_4$)] the map $t\mapsto f(t)$ is nondecreasing on $(0,+\infty)$;
	
	\item[($f_5$)]
	\[
	\lim_{t\to+\infty}\frac{t f(t) F(t)}{\mathrm{e}^{8\pi t^{2}}}
	\ge \beta,
	\quad \text{with }
	\beta>\frac{(2-\mu)(3-\mu)(4-\mu)^{2}(1+C_{s})}{16\pi^{2}\rho^{4-\mu}}
	\,\mathrm{e}^{\frac{4-\mu}{4}(a+C_{s})\rho^{2}};
	\]
	
	\item[($f_6$)] there exist constants $M_{0}>0$ and $t_{0}>0$ such that
	\[
	F(t)\le M_{0}|f(t)| \quad \text{for all } t\ge t_{0}.
	\]
\end{itemize}
Here $a$, $C_{s}$ and $\rho$ are positive constants that will be fixed later in the variational construction.

For the potential $V\in C(\R^2)$ we assume that
\begin{itemize}
	\item [$(V)$] \(0<V_{0}=\inf\limits_{x\in\R^{2}} V(x)<V_{\infty}=\liminf\limits_{|x|\to+\infty} V(x)<+\infty.\)
\end{itemize}
This type of condition was first introduced by Rabinowitz \cite{1992RabinowitzAM} and is widely used to recover compactness and to describe concentration near the global minima of $V$.

Under these assumptions, we obtain the following result.

\begin{Thm}\label{thm1.1}
	Assume that $(f_1)$--$(f_6)$ and $(V)$ hold. Then there exists $\varepsilon_{0}>0$ such that, for every $\varepsilon\in(0,\varepsilon_{0})$, problem \eqref{eq1.1} admits at least one positive ground state solution $u_{\varepsilon}$. Moreover, if $\eta_{\varepsilon}\in\R^{2}$ is a global maximum point of $u_{\varepsilon}$, then
	\[
	\lim_{\varepsilon\to0} V(\eta_{\varepsilon})=V_{0}.
	\]
\end{Thm}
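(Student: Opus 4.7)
The plan is to combine a standard semiclassical variational scheme with a Trudinger--Moser analysis adapted to the Choquard structure and to the mixed operator $\mathcal{L}_{\varepsilon}$. Setting $v(x)=u(\varepsilon x)$ transforms \eqref{eq1.1} into the $\varepsilon$-independent form
\[
-\Delta v+(-\Delta)^{s}v+V(\varepsilon x)v=\left(\tfrac{1}{|x|^{\mu}}*F(v)\right)f(v)\quad\text{in }\R^{2},
\]
whose natural energy functional is
\[
I_{\varepsilon}(v)=\frac{1}{2}\int_{\R^{2}}|\nabla v|^{2}\,dx+\frac{1}{2}[v]_{s}^{2}+\frac{1}{2}\int_{\R^{2}}V(\varepsilon x)v^{2}\,dx-\frac{1}{2}\iint_{\R^{2}\times\R^{2}}\frac{F(v(x))F(v(y))}{|x-y|^{\mu}}\,dx\,dy,
\]
defined on the mixed Sobolev space $E=H^{1}(\R^{2})\cap H^{s}(\R^{2})$. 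First I would verify the mountain-pass geometry: hypothesis $(f_{1})$ combined with Hardy--Littlewood--Sobolev controls the Choquard term near the origin, while $(f_{2})$ and $(f_{3})$ produce a ray along which $I_{\varepsilon}$ becomes negative. Let $c_{\varepsilon}$ denote the resulting mountain-pass level and $c_{V_{0}}$ the analogous level for the autonomous limit problem in which $V(\varepsilon x)$ is replaced by the constant $V_{0}$.

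The decisive analytical step is the strict bound $c_{\varepsilon}<c_{*}$, where $c_{*}$ is the Trudinger--Moser compactness threshold associated with the doubled exponent $8\pi$ inherent to the self-convolution $(F*|\cdot|^{-\mu})F$. I would plug a renormalized Moser sequence of radius $\rho$ into $I_{\varepsilon}$ along a Nehari ray and use $(f_{5})$ to show that the Choquard contribution is large enough to push the maximal energy strictly below $c_{*}$; the constants $a$, $C_{s}$, $\rho$ in $(f_{5})$ are precisely those needed to absorb the fractional correction coming from $(-\Delta)^{s}$ in the Dirichlet integral of the Moser function. Granting this bound, any Palais--Smale sequence at level $c_{\varepsilon}$ is bounded thanks to $(f_{3})$; a nonlocal Brezis--Lieb splitting for the Riesz potential combined with uniform exponential integrability below the threshold (using $(f_{6})$ to control $F$ by $f$) yields strong convergence in $E$ after a suitable translation, producing a nontrivial critical point $v_{\varepsilon}$ of $I_{\varepsilon}$.

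To reach a positive ground state, I would use the Nehari manifold $\mathcal{N}_{\varepsilon}=\{v\ne0:\langle I_{\varepsilon}'(v),v\rangle=0\}$: by $(f_{3})$ it is a $C^{1}$ submanifold, by $(f_{4})$ every nontrivial ray crosses it exactly once, and $\inf_{\mathcal{N}_{\varepsilon}}I_{\varepsilon}=c_{\varepsilon}$. The convention $f(t)=0$ for $t\le 0$ together with the strong maximum principle for $\mathcal{L}$ on the positive cone gives $v_{\varepsilon}>0$, and undoing the scaling yields $u_{\varepsilon}$. For the concentration statement, let $\eta_{\varepsilon}$ be a global maximum of $u_{\varepsilon}$ and set $w_{\varepsilon}(x)=u_{\varepsilon}(\varepsilon x+\eta_{\varepsilon})$. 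Uniform $L^{\infty}$ and local regularity estimates for the mixed operator give $w_{\varepsilon}\to w_{*}$ in $C^{0}_{\mathrm{loc}}$ along a subsequence, where $w_{*}$ is a nontrivial ground state of the autonomous problem with potential $V_{*}=\lim V(\eta_{\varepsilon})\in[V_{0},V_{\infty}]$. Comparing $I_{\varepsilon}(u_{\varepsilon})\to c_{V_{*}}$ with $c_{\varepsilon}\to c_{V_{0}}$ and using the strict monotonicity $V\mapsto c_{V}$ (a consequence of $(f_{3})$--$(f_{4})$ via the Nehari characterization) forces $V_{*}=V_{0}$, as required.

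The step I expect to be the main obstacle is the critical-level estimate $c_{\varepsilon}<c_{*}$: the fractional perturbation $(-\Delta)^{s}$ shifts the Dirichlet energy of the standard Moser cut-off by a quantitative amount encoded by the constant $C_{s}$, so the calibration between $(f_{5})$ and the Trudinger--Moser threshold must be carried out with explicit constants. A secondary delicate point is the loss of translation compactness in $\R^{2}$ under an exponentially critical nonlinearity, which requires a concentration-compactness analysis adapted to the mixed-order space $E$ and to the nonlocal Choquard convolution in order to rule out vanishing and dichotomy along Palais--Smale sequences at level $c_{\varepsilon}$.
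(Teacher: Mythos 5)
Your proposal follows essentially the same route as the paper: mountain-pass/Nehari setup after rescaling, the Moser-sequence estimate $c<\frac{4-\mu}{8}(1+C_s)$ driven by $(f_5)$, concentration--compactness to produce a ground state, and comparison of autonomous levels $a\mapsto c_a$ to locate the concentration points near $\{V=V_0\}$. The one point to treat with care is your phrase ``strong convergence in $E$ after a suitable translation'' for the perturbed problem: since $V(\varepsilon x)$ destroys translation invariance, one cannot translate and still solve the same equation, and the paper's actual compactness mechanism is the Rabinowitz-type comparison --- a profile escaping to infinity would force $c_\varepsilon\ge c_{V_\infty}$, contradicting $c_{V_0}<c_{V_\infty}$ from $(V)$ --- which your sketch only implicitly invokes.
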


The rest of the paper is organized as follows. In Section 2 we introduce the variational framework and collect the main analytical tools. In Section 3 we derive quantitative estimates for the minimax level. Section 4 is devoted to the autonomous problem with constant potential $V_0$, where we prove the existence of a ground state solution. In Section 5 we treat the singularly perturbed problem and establish the existence of ground state solutions for $\varepsilon>0$ small. Finally, in Section~6 we analyze the concentration behavior as $\varepsilon \rightarrow 0$, proving the compactness of translated sequences and locating the concentration points near the set $M=\left\{x \in \mathbb{R}^2: V(x)=V_0\right\}$.
\vspace{10pt}

\textbf{Notation.}
Throughout the paper we use the following notation.
\begin{itemize}
	\item[$\bullet$] $B_{R}(x)$ denotes the open ball with radius $R>0$ centered at $x\in\mathbb{R}^2$.
	\item[$\bullet$] The symbols $C$ and $C_i$ ($i\in\mathbb{N}^+$) denote positive constants whose value may change from line to line.
	\item[$\bullet$] The arrows ``$\rightarrow$'' and ``$\rightharpoonup$'' stand for strong convergence and weak convergence, respectively.
	\item[$\bullet$] $o_n(1)$ denotes a quantity that tends to $0$ as $n\to \infty$.
	\item[$\bullet$] For $r\geq 1$, $\|u\|_r=\big(\int_{\mathbb{R}^2}|u|^r\,dx\big)^{1/r}$ is the norm of $u$ in $L^r(\mathbb{R}^2)$.
	\item[$\bullet$] $\|u\|_\infty=\mathrm{ess}\sup\limits_{x\in\mathbb{R}^2}|u(x)|$ is the norm of $u$ in $L^\infty(\mathbb{R}^2)$.
\end{itemize}

\section{Preliminary results}

Throughout this section we assume that the potential $V$ and the nonlinearity $f$ satisfy assumptions $(V)$ and $(f_1)$--$(f_6)$. The Sobolev space $H^{1}(\R^{2})$ is defined by
\[
H^{1}(\R^{2})
=\bigl\{u\in L^{2}(\R^{2}) : \nabla u\in L^{2}(\R^{2};\R^{2})\bigr\},
\]
where $\nabla u$ denotes the weak gradient of $u$. Equipped with the norm
\[
\|u\|_{H^{1}(\R^{2})}
=\Bigl(\int_{\R^{2}}\bigl(|u|^{2}+|\nabla u|^{2}\bigr)\,dx\Bigr)^{\frac12},
\]
$H^{1}(\R^{2})$ is a Hilbert space.

For $s\in(0,1)$, the fractional Sobolev space $H^{s}(\R^{2})$ is defined by
\[
H^{s}(\R^{2})
=\Bigl\{u\in L^{2}(\R^{2}) :
\int_{\R^{2}}\int_{\R^{2}}\frac{|u(x)-u(y)|^{2}}{|x-y|^{2+2s}}\,dx\,dy<\infty\Bigr\},
\]
endowed with the norm
\[
\|u\|_{H^{s}(\R^{2})}
=\Bigl(\int_{\R^{2}}|u|^{2}\,dx
+\frac{C(n,s)}{2}\int_{\R^{2}}\int_{\R^{2}}\frac{|u(x)-u(y)|^{2}}{|x-y|^{2+2s}}\,dx\,dy\Bigr)^{\frac12},
\]
and the Gagliardo seminorm
\[
[u]_{s}
=\Bigl(\frac{C(n,s)}{2}\int_{\R^{2}}\int_{\R^{2}}\frac{|u(x)-u(y)|^{2}}{|x-y|^{2+2s}}\,dx\,dy\Bigr)^{\frac12}.
\]
Moreover, for $u\in C_c^\infty(\R^{2})$ the fractional Laplacian can be written, with the normalization used in this paper, as
\[
(-\Delta)^{s}u(x)=\operatorname{P.V.}\int_{\R^{2}} \frac{u(x)-u(y)}{|x-y|^{2+2s}}\,dy
=-\frac12\int_{\R^{2}}\frac{u(x+y)+u(x-y)-2u(x)}{|y|^{2+2s}}\,dy,
\]
see for instance~\cite{2012DiNezzaBSM}. In particular, for $u,v\in H^{s}(\R^{2})$ one has the identity
\[
\int_{\R^{2}} (-\Delta)^{s}u\,v\,dx
=\frac12\int_{\R^{2}}\int_{\R^{2}}\frac{(u(x)-u(y))(v(x)-v(y))}{|x-y|^{2+2s}}\,dx\,dy.
\]

The following lemma can be found in~\cite{2023AnthalJMAA}.

\begin{Lem}\label{Lem2.1}
	Let $0<s<1$. Then $H^{1}(\R^{2})$ is continuously embedded into $H^{s}(\R^{2})$, that is, there exists a constant $C_{s}>0$ such that, for every $u\in H^{1}(\R^{2})$,
	\[
	\frac12[u]_{s}^{2}\le C_{s}\,\|u\|_{H^{1}(\R^{2})}^{2}
	=C_{s}\bigl(\|u\|_{L^{2}(\R^{2})}^{2}+\|\nabla u\|_{L^{2}(\R^{2})}^{2}\bigr).
	\]
\end{Lem}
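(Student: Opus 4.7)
The plan is to bound the Gagliardo seminorm by splitting the double integral into a near-diagonal part $\{|x-y|\le 1\}$ and a far-diagonal part $\{|x-y|>1\}$, and estimating each by either $\|\nabla u\|_{L^2}^2$ or $\|u\|_{L^2}^2$ against a kernel that is integrable in the complementary region. By density it suffices to prove the inequality for $u\in C_c^{\infty}(\R^2)$ and then pass to the limit.

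For the far-diagonal part I would use the elementary estimate $|u(x)-u(y)|^2\le 2|u(x)|^2+2|u(y)|^2$ and Fubini:
\[
\int\!\!\int_{|x-y|>1}\frac{|u(x)-u(y)|^2}{|x-y|^{2+2s}}\,dx\,dy
\le 4\,\|u\|_{L^2}^2\int_{|h|>1}\frac{dh}{|h|^{2+2s}}=C_1(s)\|u\|_{L^2}^2,
\]
since $2+2s>2$ makes the tail integral finite. For the near-diagonal part I would write, for smooth $u$,
\[
u(x)-u(y)=\int_{0}^{1}\nabla u\bigl(y+t(x-y)\bigr)\cdot(x-y)\,dt,
\]
apply Cauchy--Schwarz in $t$, and obtain $|u(x)-u(y)|^2\le |x-y|^2\int_0^1|\nabla u(y+t(x-y))|^2\,dt$. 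Setting $h=x-y$ and using Fubini with the change of variables $z=y+th$ (whose Jacobian is $1$ for each fixed $t,h$) gives
\[
\int\!\!\int_{|x-y|\le 1}\frac{|u(x)-u(y)|^2}{|x-y|^{2+2s}}\,dx\,dy
\le \int_{|h|\le 1}\frac{dh}{|h|^{2s}}\int_0^1\!\!\int_{\R^2}|\nabla u(z)|^2\,dz\,dt
=C_2(s)\|\nabla u\|_{L^2}^2,
\]
where the integral $\int_{|h|\le 1}|h|^{-2s}\,dh$ converges because $2s<2$.

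Combining the two estimates yields
\[
[u]_s^2=\frac{C(n,s)}{2}\int\!\!\int_{\R^2\times\R^2}\frac{|u(x)-u(y)|^2}{|x-y|^{2+2s}}\,dx\,dy
\le C(s)\bigl(\|u\|_{L^2}^2+\|\nabla u\|_{L^2}^2\bigr)=C(s)\|u\|_{H^1}^2,
\]
for $u\in C_c^{\infty}(\R^2)$. Choosing $C_s=C(s)$ and extending by the density of $C_c^{\infty}(\R^2)$ in $H^1(\R^2)$, together with Fatou's lemma applied to the double integral, finishes the proof.

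The only mildly subtle point is the Fubini/change-of-variables step in the near-diagonal estimate: one has to be careful to keep the inner integral in $z$ free of the parameter $t$ so that it reduces cleanly to $\|\nabla u\|_{L^2}^2$. All remaining steps are routine once this reduction is performed, and the constant $C_s$ depends only on $s$ (through the two finite kernel integrals and the normalization constant $C(n,s)$).
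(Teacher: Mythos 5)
Your proof is correct. Note that the paper itself does not prove Lemma~\ref{Lem2.1} at all: it simply cites \cite{2023AnthalJMAA} (the statement is the classical embedding $H^{1}\hookrightarrow H^{s}$, cf.\ also Proposition~2.2 in \cite{2012DiNezzaBSM}). Your self-contained argument is the standard one and all steps check out: the far-diagonal piece is controlled by $\|u\|_{L^2}^2$ times $\int_{|h|>1}|h|^{-2-2s}\,dh<\infty$ (since $2s>0$), the near-diagonal piece by $\|\nabla u\|_{L^2}^2$ times $\int_{|h|\le 1}|h|^{-2s}\,dh<\infty$ (since $2s<2$), the Tonelli/change-of-variables step is legitimate because the integrand is nonnegative and $z=y+th$ has unit Jacobian in $y$ for each fixed $(t,h)$, and the density-plus-Fatou passage to general $u\in H^1(\R^2)$ is sound after extracting an a.e.\ convergent subsequence. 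The resulting constant depends only on $s$ (and the normalization $C(n,s)$), exactly as required; since the lemma only asks for $\tfrac12[u]_s^2\le C_s\|u\|_{H^1}^2$, your bound on $[u]_s^2$ is even slightly stronger. The only cosmetic remark is that your final display should carry the factor $\frac{C(n,s)}{2}$ from the paper's definition of $[u]_s$ into the constant $C_s$, which you do implicitly.
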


The presence of both local and nonlocal terms in \eqref{eq1.1} naturally leads us to consider the space
\[
W_{\varepsilon}
=\Bigl\{u\in H^{1}(\R^{2}) : \int_{\R^{2}} V(\varepsilon x)\,u^{2}(x)\,dx<\infty\Bigr\},
\]
endowed with the inner product
\[
(u,v)_{\varepsilon}
=\int_{\R^{2}}\nabla u\cdot\nabla v\,dx
+\frac12\int_{\R^{2}}\int_{\R^{2}}\frac{(u(x)-u(y))(v(x)-v(y))}{|x-y|^{2+2s}}\,dx\,dy
+\int_{\R^{2}}V(\varepsilon x)\,u(x)v(x)\,dx,
\]
and the associated norm $\|u\|_{\varepsilon}=(u,u)_{\varepsilon}^{1/2}$, namely
\[
\|u\|_{\varepsilon}^{2}
=\int_{\R^{2}}|\nabla u|^{2}\,dx
+\frac12\int_{\R^{2}}\int_{\R^{2}}\frac{|u(x)-u(y)|^{2}}{|x-y|^{2+2s}}\,dx\,dy
+\int_{\R^{2}}V(\varepsilon x)\,u^{2}(x)\,dx.
\]
By Lemma~\ref{Lem2.1} and assumption $(V)$, the norm $\|\cdot\|_{\varepsilon}$ is equivalent on $W_{\varepsilon}$ to
\[
\|u\|_{\varepsilon,0}^{2}
=\int_{\R^{2}}|\nabla u|^{2}\,dx
+\int_{\R^{2}}V(\varepsilon x)\,u^{2}(x)\,dx.
\]
In particular, since $V(\varepsilon x)\ge V_{0}>0$, one has
\[
\|u\|_{H^{1}(\R^{2})}\le C\,\|u\|_{\varepsilon}
\qquad \text{for all }u\in W_{\varepsilon}.
\]

Making the change of variables $x\mapsto \varepsilon x$ in \eqref{eq1.1}, we obtain the equivalent problem
\begin{equation}\label{eq2.1}
	-\Delta u+(-\Delta)^{s} u+V(\varepsilon x) u
	=\left(\frac{1}{|x|^{\mu}}*F(u)\right) f(u)
	\quad \text{in } \R^{2}.
\end{equation}
If $u$ is a solution of \eqref{eq2.1}, then $v(x)=u(x/\varepsilon)$ is a solution of \eqref{eq1.1}.

Problem \eqref{eq2.1} has a variational structure: its weak solutions correspond to the critical points of the functional
\[
\mathcal{J}_{\varepsilon}(u)
=\frac{1}{2}\|u\|_{\varepsilon}^{2}
-\frac{1}{2}\int_{\R^{2}}\left(\frac{1}{|x|^{\mu}}*F(u)\right)F(u)\,dx,
\]
where $F(t)=\int_{0}^{t}f(\tau)\,d\tau$. Moreover, $\mathcal{J}_{\varepsilon}\in C^{1}(W_{\varepsilon},\R)$. We define the associated Nehari manifold by
\[
\mathcal{N}_{\varepsilon}
=\bigl\{u\in W_{\varepsilon}\setminus\{0\} : \mathcal{G}(u)=0\bigr\},
\]
where
\[
\mathcal{G}(u)
=\langle \mathcal{J}_{\varepsilon}'(u),u\rangle
=\|u\|_{\varepsilon}^{2}
-\int_{\R^{2}}\left(\frac{1}{|x|^{\mu}}*F(u)\right)f(u)\,u\,dx.
\]

The first version of the Trudinger--Moser inequality in $\R^{2}$ was established by Cao, see~\cite{1992CaoPDF}; see also~\cite{2000AdachiAMS,2014CassaniJFA,1996MarcosDIE} and the references therein. It can be stated as follows.

\begin{Pro}\label{pro2.2}
	If $\alpha>0$ and $u\in H^{1}(\R^{2})$, then
	\[
	\int_{\R^{2}}\bigl(\mathrm{e}^{\alpha u^{2}}-1\bigr)\,dx<\infty.
	\]
	Moreover, if $\alpha<4\pi$ and $\|u\|_{2}\le M<\infty$, then there exists a constant $C_{1}=C_{1}(M,\alpha)>0$ such that
	\[
	\sup_{\|\nabla u\|_{2}\le1,\;\|u\|_{2}\le M}
	\int_{\R^{2}}\bigl(\mathrm{e}^{\alpha u^{2}}-1\bigr)\,dx
	\le C_{1}.
	\]
\end{Pro}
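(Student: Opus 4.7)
My plan is to reduce both assertions to the classical Moser-Trudinger inequality on domains of finite measure, namely: if $\Omega\subset\R^{2}$ has finite Lebesgue measure and $v\in H^{1}(\R^{2})$ vanishes a.e.\ outside $\Omega$ with $\|\nabla v\|_{2}\le 1$, then
\[
\int_{\Omega}\bigl(\mathrm{e}^{\gamma v^{2}}-1\bigr)\,dx \le C\,|\Omega|
\quad\text{for every } \gamma\le 4\pi,
\]
which follows from Moser's original bounded-domain result after Schwarz symmetrization. The key auxiliary tool is the elementary inequality $t^{2}\le(1+\eta)(t-K)^{2}+(1+\eta^{-1})K^{2}$, valid for $t\ge K>0$ and $\eta>0$, which separates the contribution of a fixed truncation height $K$ from the tail of $u$.

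Replacing $u$ by $|u|$ (which leaves both the norm and the integrand unchanged), I would split
\[
\int_{\R^{2}}\bigl(\mathrm{e}^{\alpha u^{2}}-1\bigr)\,dx
= \int_{\{u\le K\}}\bigl(\mathrm{e}^{\alpha u^{2}}-1\bigr)\,dx
+\int_{\{u>K\}}\bigl(\mathrm{e}^{\alpha u^{2}}-1\bigr)\,dx.
\]
Since $s\mapsto(\mathrm{e}^{\alpha s}-1)/s$ is strictly increasing on $(0,\infty)$, the first integral is controlled by $\frac{\mathrm{e}^{\alpha K^{2}}-1}{K^{2}}\|u\|_{2}^{2}$. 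For the tail, the truncation inequality gives $\mathrm{e}^{\alpha u^{2}}\le \mathrm{e}^{\alpha(1+\eta^{-1})K^{2}}\,\mathrm{e}^{\alpha(1+\eta)(u-K)^{2}}$, so matters reduce to estimating $\int_{\{u>K\}}\mathrm{e}^{\alpha(1+\eta)(u-K)^{2}}\,dx$. The function $v:=(u-K)_{+}$ lies in $H^{1}(\R^{2})$, vanishes outside $\{u>K\}$, which has measure at most $\|u\|_{2}^{2}/K^{2}$ by Chebyshev, and satisfies $\|\nabla v\|_{2}^{2}=\int_{\{u>K\}}|\nabla u|^{2}\,dx\le\|\nabla u\|_{2}^{2}$, so the reduced inequality above applies provided $\alpha(1+\eta)\|\nabla v\|_{2}^{2}\le 4\pi$.

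For the first assertion $\alpha$ is arbitrary, but $\int_{\{u>K\}}|\nabla u|^{2}\,dx\to 0$ as $K\to\infty$ by dominated convergence, so I can fix $\eta=1$ and then choose $K=K(u,\alpha)$ large enough to validate the hypothesis. For the uniform bound, the quantitative data $\|\nabla u\|_{2}\le 1$ and $\|u\|_{2}\le M$ fix everything: because $\alpha<4\pi$, one chooses $\eta>0$ once and for all with $\alpha(1+\eta)<4\pi$ together with an arbitrary $K>0$, and Moser-Trudinger then yields $\int_{\{u>K\}}\mathrm{e}^{\alpha(1+\eta)(u-K)^{2}}\,dx\le CM^{2}/K^{2}$, while the contribution from $\{u\le K\}$ is $\le\frac{\mathrm{e}^{\alpha K^{2}}-1}{K^{2}}M^{2}$. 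Combining the two gives the desired constant $C_{1}=C_{1}(M,\alpha)$. The main obstacle is the sharpness of the threshold $4\pi$: a soft Taylor expansion combined with the $\R^{2}$ Gagliardo-Nirenberg estimate $\|u\|_{2k}^{2k}\le C^{2k}(2k)^{k}\|u\|_{2}^{2}\|\nabla u\|_{2}^{2k-2}$ produces a series that diverges for large $\|\nabla u\|_{2}$, so the sharp bounded-domain Moser-Trudinger inequality is indispensable; the delicate quantitative point is absorbing the multiplicative loss $1+\eta$ in the exponent while keeping $\alpha(1+\eta)\le 4\pi$.
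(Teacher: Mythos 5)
Your argument is correct. Note, however, that the paper does not prove Proposition 2.2 at all: it is stated as a known result, attributed to Cao \cite{1992CaoPDF} (see also \cite{2000AdachiAMS,2014CassaniJFA,1996MarcosDIE}), so there is no internal proof to compare against. What you have written is essentially the standard derivation of the whole-plane Trudinger--Moser inequality from Moser's bounded-domain theorem: split at a truncation level $K$, control $\{u\le K\}$ by the monotonicity of $s\mapsto(\mathrm{e}^{\alpha s}-1)/s$ together with $\|u\|_2$, and control the tail by applying the finite-measure inequality (obtained by Schwarz symmetrization) to $(u-K)_+$ on the Chebyshev set $\{u>K\}$, absorbing the loss $u^2\le(1+\eta)(u-K)^2+(1+\eta^{-1})K^2$ into the strict gap $\alpha<4\pi$ for the uniform bound, and into the smallness of $\int_{\{u>K\}}|\nabla u|^2\,dx$ for the qualitative finiteness statement. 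Both choices of $(\eta,K)$ depend only on $(\alpha,M)$ in the uniform case, so the constant $C_1=C_1(M,\alpha)$ comes out as required; the identity $\nabla(u-K)_+=\chi_{\{u>K\}}\nabla u$ and the reduction to $|u|$ are standard and correctly invoked. I see no gap.
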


\begin{Lem}\cite{2001LiebAMS}\label{Lem2.3}
	Let $t,r>1$ and $0<\mu<N$ be such that
	\[
	\frac{1}{t}+\frac{\mu}{N}+\frac{1}{r}=2.
	\]
	If $\varphi\in L^{t}(\R^{N})$ and $\psi\in L^{r}(\R^{N})$, then there exists a constant $C(t,N,\mu,r)>0$, independent of $\varphi$ and $\psi$, such that
	\[
	\int_{\R^{N}}\left(\frac{1}{|x|^{\mu}}*\varphi\right)(x)\psi(x)\,dx
	\le C(t,N,\mu,r)\,\|\varphi\|_{t}\,\|\psi\|_{r}.
	\]
	In particular, when $N=2$ and $t=r=\frac{4}{4-\mu}$, one has
	\[
	\int_{\R^{2}}\left(\frac{1}{|x|^{\mu}}*F(u)\right)F(u)\,dx
	\le C_{\mu}\,\|F(u)\|_{\frac{4}{4-\mu}}^{2},
	\]
	where $C_{\mu}>0$ depends only on $\mu$.
\end{Lem}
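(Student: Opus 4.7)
The plan is to deduce Lemma~\ref{Lem2.3} from the classical mapping properties of the Riesz potential $I_\mu\varphi(x):=\int_{\R^N}|x-y|^{-\mu}\varphi(y)\,dy$. By Hölder's inequality and duality, the stated bilinear inequality is equivalent to the operator bound $I_\mu:L^t(\R^N)\to L^{r'}(\R^N)$, where $r'$ is the Hölder conjugate of $r$. The hypothesis $\tfrac{1}{t}+\tfrac{\mu}{N}+\tfrac{1}{r}=2$ rewrites exactly as $\tfrac{1}{t}-\tfrac{1}{r'}=\tfrac{N-\mu}{N}$, which is the scaling-invariant condition needed for such a bound. In particular it forces $t<\tfrac{N}{N-\mu}$, so $\mu t'>N$ and the tail $|y|^{-\mu}$ is integrable in $L^{t'}$ outside any ball---this integrability is the engine of the estimate.

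The central step is Hedberg's pointwise inequality. For arbitrary $R>0$ I would split
$$
I_\mu\varphi(x)=\int_{|y-x|<R}\frac{\varphi(y)}{|y-x|^\mu}\,dy+\int_{|y-x|\ge R}\frac{\varphi(y)}{|y-x|^\mu}\,dy.
$$
A dyadic annular decomposition of the near region controls the first piece by $CR^{N-\mu}M\varphi(x)$, where $M$ denotes the Hardy--Littlewood maximal operator, while Hölder's inequality on the far region, using the tail integrability above, controls the second piece by $CR^{N/t'-\mu}\|\varphi\|_t$. Balancing the two contributions by optimizing $R$ yields Hedberg's estimate
$$
I_\mu\varphi(x)\le C\,(M\varphi(x))^{t/r'}\,\|\varphi\|_t^{1-t/r'}.
$$

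Raising this inequality to the $r'$-th power, integrating over $\R^N$, and invoking the $L^t$-boundedness of $M$ (valid because $t>1$) gives $\|I_\mu\varphi\|_{r'}\le C\|\varphi\|_t$ with $C=C(t,N,\mu,r)$. A final Hölder step on $\int_{\R^N}(I_\mu\varphi)\psi\,dx$ produces the claimed bilinear bound, with the constant manifestly independent of $\varphi$ and $\psi$. The particular case is then a bookkeeping check: for $N=2$ and $t=r=\tfrac{4}{4-\mu}$ one has $\tfrac{1}{t}+\tfrac{\mu}{2}+\tfrac{1}{r}=\tfrac{4-\mu}{4}+\tfrac{\mu}{2}+\tfrac{4-\mu}{4}=2$, and applying the general inequality with $\varphi=\psi=F(u)$ delivers the Choquard form of the estimate.

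The delicate part is really the Hedberg estimate together with the maximal inequality; both are classical but require the strict inequalities $t>1$ and $0<\mu<N$ to avoid the weak-type endpoints. An alternative, due to Lieb, bypasses the maximal-function route by symmetrization: layer-cake plus the Riesz rearrangement inequality reduces matters to radially decreasing functions, on which a conformal change of variables identifies the sharp constant. Since Lemma~\ref{Lem2.3} is merely cited from \cite{2001LiebAMS} for later use, either route is acceptable; the Hedberg--maximal argument is the shortest self-contained proof and is the one I would present.
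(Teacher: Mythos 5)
Your proof is correct. Note, however, that the paper itself offers no proof of this lemma: it is the classical Hardy--Littlewood--Sobolev inequality, quoted verbatim from \cite{2001LiebAMS}, and the special case is just the substitution $N=2$, $t=r=\frac{4}{4-\mu}$, $\varphi=\psi=F(u)$, exactly the bookkeeping you perform at the end. So there is no ``paper's approach'' to compare against, only a citation. Your Hedberg route is sound in every step I can check: the duality reduction to $I_\mu:L^t\to L^{r'}$ is the standard equivalent formulation, the exponent arithmetic ($\frac1t-\frac1{r'}=\frac{N-\mu}{N}$, hence $t<\frac{N}{N-\mu}$ and $\mu t'>N$) is right, the dyadic near-region bound $CR^{N-\mu}M\varphi(x)$ uses only $\mu<N$, the far-region H\"older bound $CR^{N/t'-\mu}\|\varphi\|_t$ uses $\mu t'>N$, the optimization in $R$ gives the interpolation exponent $t/r'$ correctly, and the maximal theorem needs exactly $t>1$. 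This yields the inequality with a non-sharp constant, which is all the paper ever uses; Lieb's rearrangement proof is only needed if one wants the sharp constant. Either presenting your Hedberg argument or simply retaining the citation would be acceptable here.
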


\begin{Lem}\cite{1997MattnerTAMS}\label{Lem2.4}
	For $\varphi,\psi\in L_{\mathrm{loc}}^{1}(\R^{2})$ such that the integrals below are finite, one has
	\[
	\int_{\R^{2}}\left(\frac{1}{|x|^{\mu}}*\varphi\right)(x)\psi(x)\,dx
	\le
	\left(\int_{\R^{2}}\left(\frac{1}{|x|^{\mu}}*\varphi\right)(x)\varphi(x)\,dx\right)^{\frac12}
	\left(\int_{\R^{2}}\left(\frac{1}{|x|^{\mu}}*\psi\right)(x)\psi(x)\,dx\right)^{\frac12}.
	\]
\end{Lem}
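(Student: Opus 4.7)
The plan is to recognize the bilinear form
\[
B(\varphi,\psi) := \int_{\R^{2}}\left(\frac{1}{|x|^{\mu}}*\varphi\right)(x)\,\psi(x)\,dx
\]
as arising from a \emph{positive semidefinite} kernel on $\R^{2}$, so that the stated inequality becomes nothing other than the standard Cauchy--Schwarz inequality
\(
|B(\varphi,\psi)|^{2}\le B(\varphi,\varphi)\,B(\psi,\psi)
\)
for semidefinite symmetric forms. All the real work then amounts to verifying positivity of $B$ and extending the inequality from a regular class of test functions to the $L^{1}_{\mathrm{loc}}$ setting of the statement.

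To establish positivity I would use the subordination identity
\[
\frac{1}{|x|^{\mu}} \;=\; \frac{1}{\Gamma(\mu/2)}\int_{0}^{\infty} t^{\mu/2-1}\,\mathrm{e}^{-t|x|^{2}}\,dt, \qquad 0<\mu<2,
\]
which expresses the Riesz kernel as a superposition, against the positive measure $t^{\mu/2-1}dt$, of Gaussians $G_{t}(x)=\mathrm{e}^{-t|x|^{2}}$. Since each $G_{t}$ has positive Fourier transform $\widehat{G_{t}}(\xi)=(\pi/t)\,\mathrm{e}^{-\pi^{2}|\xi|^{2}/t}$, Parseval gives
\[
\int_{\R^{2}}\!\!\int_{\R^{2}} \varphi(y)\,\psi(x)\,G_{t}(x-y)\,dx\,dy
= \int_{\R^{2}} \widehat{G_{t}}(\xi)\,\hat{\varphi}(\xi)\,\overline{\hat{\psi}(\xi)}\,d\xi,
\]
and the right-hand side is a genuine positive semidefinite Hermitian form in $(\varphi,\psi)$.

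With positivity in hand, I would apply Cauchy--Schwarz twice. First, for each fixed $t>0$,
\[
\int_{\R^{2}}\!\!\int_{\R^{2}}\varphi(y)\psi(x)\,G_{t}(x-y)\,dx\,dy
\;\le\; Q_{t}(\varphi)^{1/2}\,Q_{t}(\psi)^{1/2},
\]
where $Q_{t}(\varphi):=\iint \varphi(x)\varphi(y)G_{t}(x-y)\,dx\,dy$. Then, multiplying by $t^{\mu/2-1}$, splitting the weight as $t^{(\mu/2-1)/2}\cdot t^{(\mu/2-1)/2}$, integrating in $t$, and applying Cauchy--Schwarz on $(0,\infty)$ with the Lebesgue measure $dt$, Fubini re-identifies $\int_{0}^{\infty}t^{\mu/2-1}Q_{t}(\varphi)\,dt = \Gamma(\mu/2)\,B(\varphi,\varphi)$. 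This chain yields the desired inequality for, say, nonnegative Schwartz functions.

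The main obstacle will be the extension from such a regular class to the general hypothesis $\varphi,\psi\in L^{1}_{\mathrm{loc}}(\R^{2})$ under which only the three integrals in the statement are assumed finite. I would handle this by first splitting $\varphi=\varphi^{+}-\varphi^{-}$ and $\psi=\psi^{+}-\psi^{-}$, reducing to the nonnegative case, and then approximating by the truncations $\varphi_{n} = \min(\varphi, n)\,\mathbf{1}_{B_{n}(0)}$ and $\psi_{n}$ defined analogously. Since all integrands involve the pointwise nonnegative kernel $|x-y|^{-\mu}$, the monotone convergence theorem propagates the Cauchy--Schwarz inequality from $(\varphi_{n},\psi_{n})$ to $(\varphi,\psi)$ in the limit $n\to\infty$. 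This is precisely the classical result of Mattner~\cite{1997MattnerTAMS}, which may alternatively be invoked directly.
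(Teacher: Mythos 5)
The paper does not prove this lemma at all: it is quoted verbatim from Mattner's paper \cite{1997MattnerTAMS}, so there is no ``paper proof'' to compare against. Your sketch supplies a genuine self-contained argument, and its core is sound: the subordination identity $|x|^{-\mu}=\Gamma(\mu/2)^{-1}\int_0^\infty t^{\mu/2-1}e^{-t|x|^2}\,dt$ is correct for $0<\mu<2$, the Plancherel computation shows each Gaussian form $Q_t$ is positive semidefinite, and the two applications of Cauchy--Schwarz (first in $(\varphi,\psi)$ for fixed $t$, then in $t$ against the measure $t^{\mu/2-1}\,dt$) combine via Tonelli to give exactly the stated inequality. This is the standard route to positive definiteness of the Riesz kernel, and for the way the lemma is actually used in this paper --- always with nonnegative inputs such as $F(u_n)$, $|g_n|$, $u_n^{(4-\mu)/2}$ --- your truncation-plus-monotone-convergence extension closes the argument cleanly (note only that the truncations $\min(\varphi,n)\mathbf{1}_{B_n}$ are not Schwartz, but they lie in $L^1\cap L^2$, where the Plancherel step applies directly, so no mollification is needed).

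The one step that does not work as written is the reduction of the general signed case to the nonnegative case via $\varphi=\varphi^+-\varphi^-$, $\psi=\psi^+-\psi^-$. Knowing the inequality for each pair of nonnegative parts does not let you reassemble it for the differences: the bilinear form expands into four cross terms, the quadratic terms $B(\varphi,\varphi)$ and $B(|\varphi|,|\varphi|)$ are not comparable in the useful direction, and under the stated hypotheses (only the three displayed integrals finite) the cross integrals $B(\varphi^{\pm},\varphi^{\mp})$ need not even be finite. Handling signed $L^1_{\mathrm{loc}}$ inputs with minimal integrability assumptions is precisely the delicate content of Mattner's theorem. Since every application in this paper involves nonnegative functions, you could simply state and prove the lemma for $\varphi,\psi\ge 0$ and drop the splitting; as it stands, that splitting is the only genuinely flawed step in an otherwise correct proof.
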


\begin{Lem}\label{Lem2.5}
	Let $u\in W_{\varepsilon}$, $k>0$, $q>0$, and assume that
	\[
	\|u\|_{H^{1}(\R^{2})}\le M
	\quad\text{and}\quad
	\frac{4}{4-\mu}\,k\,M^{2}<4\pi.
	\]
	Then there exists a constant $C=C(k,M,q)>0$ such that
	\[
	\int_{\R^{2}}\bigl(\bigl(\mathrm{e}^{k u^{2}}-1\bigr)|u|^{q}\bigr)^{\frac{4}{4-\mu}}\,dx
	\le C\,\|u\|_{\varepsilon}^{\frac{4q}{4-\mu}}.
	\]
\end{Lem}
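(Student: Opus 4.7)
The plan is to split the integrand via Hölder's inequality into an exponential factor, to be handled by the Trudinger--Moser inequality (Proposition~\ref{pro2.2}), and a polynomial factor, to be handled by the Sobolev embedding $H^{1}(\R^{2})\hookrightarrow L^{r}(\R^{2})$ for $r\in[2,\infty)$. I would first record the elementary pointwise inequality
\[
(e^{a}-1)^{r}\le e^{ra}-1 \qquad \text{for all } a\ge 0,\ r\ge 1,
\]
which, via the substitution $x=e^{-a}\in(0,1]$, reduces to $(1-x)^{r}\le 1-x^{r}$ on $[0,1]$, an immediate consequence of elementary calculus.

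Next I would fix an auxiliary Hölder exponent $p>1$, with conjugate $p'=p/(p-1)$, chosen so that
\[
\frac{4kp}{4-\mu}\,M^{2}<4\pi
\qquad\text{and}\qquad
\frac{4qp'}{4-\mu}\ge 2.
\]
The first is an open condition at $p=1$ by the strict inequality in the hypothesis; the second is automatic when $q\ge(4-\mu)/2$ and otherwise can always be arranged by taking $p$ close enough to $1$, since $p'\to\infty$ as $p\to 1^{+}$. Applying Hölder and the pointwise bound above with exponent $r=4p/(4-\mu)\ge 1$ to the resulting power,
\[
\int_{\R^{2}}\bigl(e^{ku^{2}}-1\bigr)^{\frac{4}{4-\mu}}|u|^{\frac{4q}{4-\mu}}\,dx
\le\Bigl(\int_{\R^{2}}\bigl(e^{\frac{4kp}{4-\mu}u^{2}}-1\bigr)\,dx\Bigr)^{1/p}\Bigl(\int_{\R^{2}}|u|^{\frac{4qp'}{4-\mu}}\,dx\Bigr)^{1/p'}.
\]

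For the first factor I would normalize by setting $v=u/\|u\|_{H^{1}(\R^{2})}$, so that $\|\nabla v\|_{2}\le 1$ and $\|v\|_{2}\le 1$, and observe that
\[
\frac{4kp}{4-\mu}\,u^{2}=\frac{4kp}{4-\mu}\,\|u\|_{H^{1}(\R^{2})}^{2}\,v^{2}\le\alpha\,v^{2},
\qquad \alpha:=\frac{4kp M^{2}}{4-\mu}<4\pi,
\]
so Proposition~\ref{pro2.2} bounds the integral by a constant depending only on $k$, $M$ and $p$. For the second factor, the Sobolev embedding $H^{1}(\R^{2})\hookrightarrow L^{4qp'/(4-\mu)}(\R^{2})$ (valid because $4qp'/(4-\mu)\ge 2$) combined with the estimate $\|u\|_{H^{1}(\R^{2})}\le C\|u\|_{\varepsilon}$ recorded after the definition of $W_{\varepsilon}$ gives $\bigl(\int|u|^{4qp'/(4-\mu)}\,dx\bigr)^{1/p'}\le C\|u\|_{\varepsilon}^{4q/(4-\mu)}$. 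Multiplying the two factors yields the claim, with a constant $C$ depending only on $k$, $M$ and $q$ through the fixed choice of $p$. The only mildly delicate step is this coordinated choice of $p$: it must be strictly larger than $1$ to serve as a Hölder exponent, yet close enough to $1$ to preserve both the Trudinger--Moser condition and the Sobolev threshold; since all three are open conditions at $p=1^{+}$, a valid $p$ always exists and no genuine obstacle arises.
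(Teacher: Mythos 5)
Your proof is correct and follows essentially the same route as the paper's: a Hölder splitting with an auxiliary exponent close to $1$, the pointwise bound $(\mathrm{e}^{a}-1)^{r}\le \mathrm{e}^{ra}-1$, Proposition~\ref{pro2.2} for the exponential factor after normalizing in $H^{1}$, and the Sobolev embedding plus $\|u\|_{H^{1}(\R^{2})}\le C\|u\|_{\varepsilon}$ for the polynomial factor. The only cosmetic difference is that you fold the two applications of the pointwise inequality into a single exponent $r=4p/(4-\mu)$, whereas the paper applies it once with $4/(4-\mu)$ and once with the auxiliary exponent.
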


\begin{proof}
	Let $p=\frac{4}{4-\mu}$. Choose $r>1$ and set $r'=\frac{r}{r-1}$ so that
	\[
	\frac{p q r'}{1}\ge 2
	\quad\text{and}\quad
	r\,p\,k\,M^{2}<4\pi.
	\]
	Using $(\mathrm{e}^{\tau}-1)^{p}\le \mathrm{e}^{p\tau}-1$ and $(\mathrm{e}^{\tau}-1)^{r}\le \mathrm{e}^{r\tau}-1$ for $\tau\ge0$, H\"older's inequality and Proposition~\ref{pro2.2}, we obtain
	\[
	\begin{aligned}
		\int_{\R^{2}}\bigl(\bigl(\mathrm{e}^{k u^{2}}-1\bigr)|u|^{q}\bigr)^{p}\,dx
		&\le \int_{\R^{2}}\bigl(\mathrm{e}^{p k u^{2}}-1\bigr)\,|u|^{p q}\,dx\\
		&\le \left(\int_{\R^{2}}\bigl(\mathrm{e}^{p k u^{2}}-1\bigr)^{r}\,dx\right)^{\frac{1}{r}}
		\left(\int_{\R^{2}}|u|^{p q r'}\,dx\right)^{\frac{1}{r'}}.
	\end{aligned}
	\]
	Write $u = A v$ with $A=\|u\|_{H^{1}(\R^{2})}\le M$ and $\|v\|_{H^{1}(\R^{2})}=1$. Then
	\[
	\bigl(\mathrm{e}^{p k u^{2}}-1\bigr)^{r}
	\le \mathrm{e}^{r p k A^{2} v^{2}}-1,
	\]
	and by the choice of $r$ the parameter $r p k M^{2}$ is strictly less than $4\pi$. Hence Proposition~\ref{pro2.2} yields
	\[
	\int_{\R^{2}}\bigl(\mathrm{e}^{r p k A^{2} v^{2}}-1\bigr)\,dx
	\le\sup_{\|\nabla v\|_{2}\le1,\;\|v\|_{2}\le 1}
	\int_{\R^{2}}\bigl(\mathrm{e}^{r p k M^{2} v^{2}}-1\bigr)\,dx
	\leq  C(k,M,r)
	\]
	for some constant $C(k,M,r)>0$ independent of $u$.
	
	On the other hand, since $p q r'\ge2$ and $u\in H^{1}(\R^{2})$, the continuous embedding $H^{1}(\R^{2})\hookrightarrow L^{m}(\R^{2})$ for all $m\ge2$ gives
	\[
	\left(\int_{\R^{2}}|u|^{p q r'}\,dx\right)^{\frac{1}{r'}}
	\le C\,\|u\|_{H^{1}(\R^{2})}^{p q}.
	\]
	Combining the last two estimates and using $\|u\|_{H^{1}(\R^{2})}\le C\,\|u\|_{\varepsilon}$ on $W_{\varepsilon}$, we conclude that
	\[
	\int_{\R^{2}}\bigl(\bigl(\mathrm{e}^{k u^{2}}-1\bigr)|u|^{q}\bigr)^{p}\,dx
	\le C(k,M,q)\,\|u\|_{\varepsilon}^{p q}.
	\]
	This completes the proof.
\end{proof}

\begin{Lem}\label{Lem2.6}
	For any $\varepsilon>0$, the functional $\mathcal{J}_{\varepsilon}$ satisfies:
	\begin{itemize}
		\item[(i)] There exist $\rho>0$ and $\alpha_{1}>0$ such that $\mathcal{J}_{\varepsilon}(u)\ge \alpha_{1}$ for all $u\in W_{\varepsilon}$ with $\|u\|_{\varepsilon}=\rho$.
		\item[(ii)] There exists $e\in W_{\varepsilon}$ with $\|e\|_{\varepsilon}>\rho$ such that $\mathcal{J}_{\varepsilon}(e)<0$.
	\end{itemize}
\end{Lem}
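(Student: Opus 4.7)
For part (i), the strategy is to combine the pointwise bound coming from $(f_1)$ and $(f_2)$ with Hardy--Littlewood--Sobolev and the Moser-type estimate in Lemma \ref{Lem2.5}. From $(f_1)$ and $(f_2)$ one first derives that, for every $\delta>0$, every $\alpha>4\pi$, and every $q>1$, there exists $C_\delta>0$ with
\[
|F(t)|\le \delta\,|t|^{(4-\mu)/2}+C_\delta\,|t|^{q}\bigl(\mathrm{e}^{\alpha t^{2}}-1\bigr)
\qquad \text{for all }t\in\R.
\]
Applying Lemma \ref{Lem2.3} with $t=r=\tfrac{4}{4-\mu}$ bounds the nonlocal term by $\|F(u)\|_{4/(4-\mu)}^{2}$. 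Raising the pointwise bound to the power $4/(4-\mu)$, the polynomial part is estimated by the $L^{2}$-embedding and the exponential part by Lemma \ref{Lem2.5} with $k=\alpha$, giving
\[
\|F(u)\|_{4/(4-\mu)}^{2}\le C\bigl(\|u\|_\varepsilon^{4-\mu}+\|u\|_\varepsilon^{2q}\bigr),
\]
provided that the admissibility condition $\tfrac{4\alpha}{4-\mu}\|u\|_{H^{1}(\R^{2})}^{2}<4\pi$ of Lemma \ref{Lem2.5} holds; since $\|u\|_{H^{1}(\R^{2})}\le C\|u\|_\varepsilon$, this is secured by taking $\|u\|_\varepsilon$ sufficiently small. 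Combining everything,
\[
\mathcal{J}_\varepsilon(u)\ge \tfrac{1}{2}\|u\|_\varepsilon^{2}-C\bigl(\|u\|_\varepsilon^{4-\mu}+\|u\|_\varepsilon^{2q}\bigr),
\]
and since $4-\mu>2$ and $2q>2$, the right-hand side is strictly positive on a sufficiently small sphere $\|u\|_\varepsilon=\rho$, which yields the desired $\alpha_{1}>0$.

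For part (ii), I would use $(f_3)$ to extract a polynomial minorant for $F$. The inequality $f(t)t\ge\theta F(t)$ is equivalent to the monotonicity of $t\mapsto F(t)/t^{\theta}$ on $(0,+\infty)$, and $(f_2)$ together with $(f_4)$ forces $F(t_{1})>0$ for some $t_{1}>0$; hence $F(t)\ge c_{0}t^{\theta}$ for all $t\ge t_{1}$, with $c_{0}=F(t_{1})/t_{1}^{\theta}>0$. Fix $\phi\in C_{c}^{\infty}(\R^{2})\setminus\{0\}$ with $\phi\ge 0$, set $\Omega=\{\phi\ge 1\}$, and note that $t\phi\ge t_{1}$ on $\Omega$ once $t\ge t_{1}$. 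Then $F(t\phi)\ge c_{0}t^{\theta}\phi^{\theta}$ on $\Omega$, and restricting the convolution integral to $\Omega\times\Omega$ gives
\[
\int_{\R^{2}}\left(\frac{1}{|x|^{\mu}}*F(t\phi)\right)F(t\phi)\,dx
\ge c_{0}^{2}\,t^{2\theta}\int_{\Omega}\int_{\Omega}\frac{\phi^{\theta}(x)\phi^{\theta}(y)}{|x-y|^{\mu}}\,dx\,dy=K_{\phi}\,t^{2\theta},
\]
with $K_{\phi}>0$. Since $\|t\phi\|_\varepsilon^{2}=t^{2}\|\phi\|_\varepsilon^{2}$ and $2\theta>2$, $\mathcal{J}_\varepsilon(t\phi)\to-\infty$ as $t\to+\infty$, and $e:=t_\star\phi$ for $t_\star$ large enough is the required point with $\|e\|_\varepsilon>\rho$ and $\mathcal{J}_\varepsilon(e)<0$.

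The main technical knot is in (i): the simultaneous use of Hardy--Littlewood--Sobolev and the Trudinger--Moser bound through Lemma \ref{Lem2.5} imposes a smallness constraint on $\|u\|_{H^{1}(\R^{2})}$ (and hence on $\rho$) via the threshold $\tfrac{4\alpha}{4-\mu}\|u\|_{H^{1}(\R^{2})}^{2}<4\pi$, so $\alpha>4\pi$ must be chosen compatibly with $\rho$. Part (ii) is comparatively routine once the polynomial minorant $F(t)\ge c_{0}t^{\theta}$ is in place; the only conceptually noteworthy point is that the product structure of the Choquard nonlinearity produces the gain $t^{2\theta}$, so the condition $\theta>1$ already suffices to beat the quadratic kinetic term, without needing the usual $\theta>2$ of the local case.
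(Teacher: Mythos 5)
Part (i) of your proposal is essentially identical to the paper's argument: the same decomposition of $F$ via $(f_1)$--$(f_2)$, Hardy--Littlewood--Sobolev with $t=r=\tfrac{4}{4-\mu}$, Lemma \ref{Lem2.5} for the exponential piece under the smallness constraint on $\rho$, and the conclusion from $4-\mu>2$, $2q>2$. Nothing to add there.

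Part (ii) is correct but takes a genuinely different route. The paper works with an arbitrary $u_0\ge0$, $u_0\not\equiv0$, sets $A(t)=\Psi(tu_0/\|u_0\|_\varepsilon)$, derives the differential inequality $A'(t)/A(t)\ge 2\theta/t$ from $(f_3)$ and the symmetry of the convolution form, and integrates to get $\Psi(tu_0)\ge c\,t^{2\theta}$. You instead convert $(f_3)$ into the pointwise minorant $F(t)\ge c_0t^{\theta}$ for $t\ge t_1$ (via the monotonicity of $F(t)/t^{\theta}$, which is indeed equivalent to $(f_3)$), and insert it into the double integral restricted to a plateau of a fixed test function. Your version is more elementary, and it has a small technical advantage: the paper's integration step tacitly needs $A(1)=\Psi(u_0/\|u_0\|_\varepsilon)>0$, which is not automatic if $f$ vanishes on a neighborhood of the origin, whereas your construction guarantees positivity of the Choquard term by forcing $t\phi\ge t_1$ on a set of positive measure. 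The only price is that the conclusion is obtained for a specific $e$ rather than along an arbitrary ray, which is all the mountain-pass geometry requires. Two minor points: you should normalize $\phi$ so that $\{\phi\ge1\}$ has positive measure (otherwise $\Omega$ could be empty), and you should record that $f\ge0$ on $(0,\infty)$ (from $(f_3)$) together with the critical growth in $(f_2)$ is what produces $F(t_1)>0$; $(f_4)$ is not actually needed for that step. Your closing observation that the quadratic structure of the Choquard term yields the gain $t^{2\theta}$, so that $\theta>1$ suffices, is exactly the point the paper exploits as well.
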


\begin{proof}
	(i) By $(f_{1})$--$(f_{2})$, there exist $q>1$ and $k>0$ such that for every $\eta>0$ there is $C_{\eta}>0$ with
	\begin{equation}\label{eq2.2}
		|F(t)|
		\le \eta\,|t|^{\frac{4-\mu}{2}}
		+C_{\eta}\,|t|^{q}\bigl(\mathrm{e}^{k t^{2}}-1\bigr)
		\quad\text{for all }t\in\R.
	\end{equation}
	Using Lemma~\ref{Lem2.3} with $N=2$ and $t=r=\frac{4}{4-\mu}$, we obtain
	\[
	\int_{\R^{2}}\left(\frac{1}{|x|^{\mu}}*F(u)\right)F(u)\,dx
	\le C_{\mu}\,\|F(u)\|_{\frac{4}{4-\mu}}^{2}.
	\]
	Fix $\eta>0$. By \eqref{eq2.2} and $(a+b)^{p}\le 2^{p-1}(a^{p}+b^{p})$ for $a,b\ge0$, $p\ge1$, we deduce
	\[
	\begin{aligned}
		\|F(u)\|_{\frac{4}{4-\mu}}^{2}
		&\le C\left(\int_{\R^{2}}\Bigl(\eta\,|u|^{\frac{4-\mu}{2}}
		+C_{\eta}\,|u|^{q}\bigl(\mathrm{e}^{k u^{2}}-1\bigr)\Bigr)^{\frac{4}{4-\mu}}\,dx\right)^{\frac{4-\mu}{2}}\\
		&\le C_{1}\,\|u\|_{2}^{4-\mu}
		+C_{2}\left(\int_{\R^{2}}\bigl(|u|^{q}\bigl(\mathrm{e}^{k u^{2}}-1\bigr)\bigr)^{\frac{4}{4-\mu}}\,dx\right)^{\frac{4-\mu}{2}}.
	\end{aligned}
	\]
	Let $C_H>0$ be such that $\|u\|_{H^{1}(\R^{2})}\le C_H\|u\|_{\varepsilon}$ for all $u\in W_{\varepsilon}$. Choose $\rho>0$ so small that, whenever $\|u\|_{\varepsilon}=\rho$, one has
	\[
	\|u\|_{H^{1}(\R^{2})}\le M
	\quad\text{with}\quad
	M=C_H\rho
	\quad\text{and}\quad
	\frac{4}{4-\mu}\,k\,M^{2}<4\pi,
	\]
	so that Lemma~\ref{Lem2.5} applies. Using $\|u\|_{2}\le V_{0}^{-\frac12}\|u\|_{\varepsilon}$ and Lemma~\ref{Lem2.5}, for $\|u\|_{\varepsilon}=\rho$ we obtain
	\[
	\int_{\R^{2}}\left(\frac{1}{|x|^{\mu}}*F(u)\right)F(u)\,dx
	\le C_{3}\,\|u\|_{\varepsilon}^{4-\mu}
	+C_{4}\,\|u\|_{\varepsilon}^{2q}.
	\]
	Therefore, for $\|u\|_{\varepsilon}=\rho$,
	\[
	\mathcal{J}_{\varepsilon}(u)
	\ge \frac{1}{2}\rho^{2}
	-\frac{C_{3}}{2}\rho^{4-\mu}
	-\frac{C_{4}}{2}\rho^{2q}.
	\]
	Since $4-\mu>2$ and $2q>2$, choosing $\rho$ smaller if necessary we get
	\[
	\mathcal{J}_{\varepsilon}(u)\ge \alpha_{1}>0,
	\]
	which yields (i).
	
	(ii) Let $u_{0}\in W_{\varepsilon}$ satisfy $u_{0}\ge0$ and $u_{0}\not\equiv0$. Set
	\[
	\Psi(u)=\int_{\R^{2}}\left(\frac{1}{|x|^{\mu}}*F(u)\right)F(u)\,dx.
	\]
	For $t>0$ define
	\[
	A(t)=\Psi\left(\frac{t u_{0}}{\|u_{0}\|_{\varepsilon}}\right).
	\]
	Then $A(t)\ge0$ for $t>0$. Moreover, using the symmetry of the convolution form one computes
	\[
	A'(t)
	=\frac{2}{\|u_{0}\|_{\varepsilon}}\int_{\R^{2}}
	\left(\frac{1}{|x|^{\mu}}*F\left(\frac{t u_{0}}{\|u_{0}\|_{\varepsilon}}\right)\right)
	f\left(\frac{t u_{0}}{\|u_{0}\|_{\varepsilon}}\right)u_{0}\,dx.
	\]
	Rewriting,
	\[
	\begin{aligned}
		A'(t)
		&=\frac{2}{t}\int_{\R^{2}}
		\left(\frac{1}{|x|^{\mu}}*F\left(\frac{t u_{0}}{\|u_{0}\|_{\varepsilon}}\right)\right)
		f\left(\frac{t u_{0}}{\|u_{0}\|_{\varepsilon}}\right)
		\frac{t u_{0}}{\|u_{0}\|_{\varepsilon}}\,dx\\
		&\ge \frac{2\theta}{t}\int_{\R^{2}}
		\left(\frac{1}{|x|^{\mu}}*F\left(\frac{t u_{0}}{\|u_{0}\|_{\varepsilon}}\right)\right)
		F\left(\frac{t u_{0}}{\|u_{0}\|_{\varepsilon}}\right)\,dx
		=\frac{2\theta}{t}\,A(t),
	\end{aligned}
	\]
	where we used $(f_{3})$. Thus for $t>0$,
	\[
	\frac{A'(t)}{A(t)}\ge \frac{2\theta}{t}
	\quad\text{whenever }A(t)>0.
	\]
	Integrating from $1$ to $\sigma>1$ gives
	\[
	A(\sigma)\ge A(1)\,\sigma^{2\theta}\quad\text{for all }\sigma\ge1.
	\]
	Taking $\sigma=t\|u_{0}\|_{\varepsilon}$ with $t\ge \frac{1}{\|u_{0}\|_{\varepsilon}}$, we obtain
	\[
	\Psi(tu_{0})
	=A\bigl(t\|u_{0}\|_{\varepsilon}\bigr)
	\ge \Psi\Bigl(\frac{u_{0}}{\|u_{0}\|_{\varepsilon}}\Bigr)\,\|u_{0}\|_{\varepsilon}^{2\theta}\,t^{2\theta}.
	\]
	Therefore,
	\[
	\mathcal{J}_{\varepsilon}(t u_{0})
	\le \frac{t^{2}}{2}\,\|u_{0}\|_{\varepsilon}^{2}
	-\frac{1}{2}\Psi\Bigl(\frac{u_{0}}{\|u_{0}\|_{\varepsilon}}\Bigr)\,\|u_{0}\|_{\varepsilon}^{2\theta}\,t^{2\theta}.
	\]
	Since $\theta>1$, the right-hand side tends to $-\infty$ as $t\to+\infty$. Hence we can choose $t_{0}>0$ large enough such that, setting $e=t_{0}u_{0}$, we have $\|e\|_{\varepsilon}>\rho$ and $\mathcal{J}_{\varepsilon}(e)<0$. This proves (ii).
\end{proof}

Combining Lemma~\ref{Lem2.6} with the mountain pass theorem, we obtain a $(PS)$ sequence $\{u_{n}\}\subset W_{\varepsilon}$ such that
\[
\mathcal{J}_{\varepsilon}(u_{n})\to c_{\varepsilon}
\quad\text{and}\quad
\mathcal{J}_{\varepsilon}'(u_{n})\to0 \ \text{ in } W_{\varepsilon}^{*},
\]
where the minimax level is given by
\[
c_{\varepsilon}
=\inf_{g\in\Gamma}\,\sup_{t\in[0,1]}\mathcal{J}_{\varepsilon}(g(t))>0,
\]
and
\[
\Gamma
=\bigl\{g\in C([0,1],W_{\varepsilon}) : g(0)=0,\ \mathcal{J}_{\varepsilon}(g(1))<0\bigr\}.
\]

\begin{Lem}\label{Lem2.7}
Assume that $f(t)=0$ for all $t\le 0$.
For every $u\in W_{\varepsilon}\setminus\{0\}$ with $u^{+}\not\equiv 0$ there exists a unique $t(u)>0$ such that $t(u)u\in\mathcal{N}_{\varepsilon}$. Moreover,
\[
\mathcal{J}_{\varepsilon}(t(u)u)
=\max_{t\ge0}\mathcal{J}_{\varepsilon}(tu).
\]
\end{Lem}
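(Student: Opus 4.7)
My plan is to study the function $\phi_u(t) := \mathcal{J}_\varepsilon(tu)$ for $t\ge 0$. A direct computation gives
\[
\phi_u'(t) = t\,\|u\|_\varepsilon^2 - \int_{\R^2}\Bigl(\frac{1}{|x|^\mu}*F(tu)\Bigr)f(tu)\,u\,dx = \frac{1}{t}\,\mathcal{G}(tu),\qquad t>0,
\]
so that $tu\in\mathcal{N}_\varepsilon$ is equivalent to $\phi_u'(t)=0$ with $t>0$. The lemma therefore reduces to showing that $\phi_u$ admits a unique positive critical point, which is also its global maximum.

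For existence, $\phi_u(0)=0$, and the estimate in the proof of Lemma~\ref{Lem2.6}(i), applied along the ray $t\mapsto tu$, gives $\phi_u(t)>0$ for all sufficiently small $t>0$. Since $u^+\not\equiv 0$, assumption $(f_2)$ ensures $F(\lambda u)\not\equiv 0$ for some $\lambda>0$, so the Choquard quantity $\Psi(u/\|u\|_\varepsilon)$ appearing in the proof of Lemma~\ref{Lem2.6}(ii) is strictly positive after suitable rescaling, and the Gronwall-type inequality deduced there from $(f_3)$ yields $\phi_u(t)\le \tfrac{t^2}{2}\|u\|_\varepsilon^2 - C\,t^{2\theta}$ for $t\ge t_1$, with $C,t_1>0$ depending on $u$. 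Since $\theta>1$, $\phi_u(t)\to -\infty$ as $t\to+\infty$, so $\phi_u$ attains its positive maximum at some $t(u)>0$; there $\phi_u'(t(u))=0$, hence $t(u)u\in\mathcal{N}_\varepsilon$ and $\mathcal{J}_\varepsilon(t(u)u)=\max_{t\ge 0}\mathcal{J}_\varepsilon(tu)$.

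For uniqueness, I rewrite $\phi_u'(t) = t\bigl(\|u\|_\varepsilon^2 - \Phi(t)\bigr)$ with
\[
\Phi(t) := \int_{\R^2}\!\int_{\R^2}\frac{F(tu(x))}{t}\,f(tu(y))\,u(y)\,\frac{dx\,dy}{|x-y|^\mu},
\]
where the use of Fubini is justified by Lemma~\ref{Lem2.3} and Lemma~\ref{Lem2.5}. Since $f\equiv 0$ on $(-\infty,0]$, the integrand is supported on $\{u(x)>0\}\times\{u(y)>0\}$. On that set, $(f_3)$ yields
\[
\frac{d}{dt}\!\left(\frac{F(tu(x))}{t}\right) = \frac{f(tu(x))\,tu(x)-F(tu(x))}{t^2} \ge \frac{(\theta-1)F(tu(x))}{t^2}\ge 0,
\]
with strict inequality whenever $F(tu(x))>0$, while $(f_4)$ makes $t\mapsto f(tu(y))$ nondecreasing. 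Hence $\Phi$ is nondecreasing. If $\Phi(t_1)=\Phi(t_2)>0$ for some $0<t_1<t_2$, constancy of $\Phi$ on $[t_1,t_2]$ combined with the product structure of the integrand and the strict increase provided by $(f_3)$ forces $F(tu(x))=0$ almost everywhere on $\{u>0\}$ for every $t\in[t_1,t_2]$, contradicting $\Phi(t_1)>0$. Therefore $\Phi^{-1}(\|u\|_\varepsilon^2)$ is a singleton, and $t(u)$ is unique.

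The main obstacle is precisely this last step: upgrading the pointwise monotonicity of the integrand to strict monotonicity of the double integral $\Phi$. Assumption $(f_4)$ provides only nondecreasing behavior of $f$, and $F$ may vanish near $0$; the strictness must therefore be extracted from $(f_3)$, combined with $(f_2)$ and the hypothesis $u^+\not\equiv 0$ to guarantee that the set on which $F(tu(x))>0$ has positive measure at the relevant energy level.
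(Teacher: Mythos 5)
Your proposal is correct and follows essentially the same route as the paper: existence of $t(u)$ from the mountain-pass shape of $t\mapsto\mathcal{J}_{\varepsilon}(tu)$, and uniqueness by writing the Nehari condition as $\|u\|_{\varepsilon}^{2}=\Phi(t)$ and showing $\Phi$ is nondecreasing via $(f_{3})$–$(f_{4})$, with strictness extracted on a positive-measure set where the integrand is positive. The only (harmless) imprecision is in the final contradiction: constancy of $\Phi$ on $[t_{1},t_{2}]$ forces one of the two factors $F(t_{1}u(x))$ or $f(t_{1}u(y))u(y)$ to vanish a.e.\ — not necessarily the first — but either alternative yields $\Phi(t_{1})=0$, which is the contradiction you need.
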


\begin{proof}
Fix $u\in W_{\varepsilon}\setminus\{0\}$ with $u^{+}\not\equiv 0$ and define $h:[0,\infty)\to\mathbb{R}$ by
\[
h(t)=\mathcal{J}_{\varepsilon}(tu),\qquad t\ge0.
\]
By $(f_{1})$ and Lemma~\ref{Lem2.3}, one has $h(t)>0$ for all $t>0$ sufficiently small.
By $(f_{3})$ and the argument in Lemma~\ref{Lem2.6}(ii), one has $h(t)\to-\infty$ as $t\to+\infty$.
Hence $h$ attains a global maximum at some $t(u)>0$, and at such a point $h'(t(u))=0$.
Since
\[
h'(t)=\bigl\langle \mathcal{J}_{\varepsilon}'(tu),u\bigr\rangle,
\]
we obtain
\[
\bigl\langle \mathcal{J}_{\varepsilon}'(t(u)u),u\bigr\rangle=0.
\]
Because $t(u)>0$,
\[
\bigl\langle \mathcal{J}_{\varepsilon}'(t(u)u),t(u)u\bigr\rangle
=t(u)\,\bigl\langle \mathcal{J}_{\varepsilon}'(t(u)u),u\bigr\rangle=0,
\]
that is, $t(u)u\in\mathcal{N}_{\varepsilon}$. The maximality of $t(u)$ gives
\[
\mathcal{J}_{\varepsilon}(t(u)u)=\max_{t\ge0}\mathcal{J}_{\varepsilon}(tu).
\]

Now we prove the uniqueness. Since $f(t)=0$ for $t\le 0$, we have $F(t)=0$ for $t\le 0$, and thus
\[
F(tu)=F(tu^{+}),\qquad f(tu)=f(tu^{+})
\quad\text{a.e. in }\mathbb{R}^{2},\ \forall t\ge 0.
\]
Writing $h'(t)=0$ in the symmetric double-integral form, we have that $h'(t)=0$ is equivalent to
\begin{equation}\label{eq2.3}
\|u\|_{\varepsilon}^{2}
=
\iint_{\mathbb{R}^{2}\times\mathbb{R}^{2}}
\left(\frac{F(tu^{+}(y))}{t\,u^{+}(y)}\right)\,
f(tu^{+}(x))\,
\frac{u^{+}(x)\,u^{+}(y)}{|x-y|^{\mu}}\,dx\,dy,
\end{equation}
where we set $\frac{F(tu^{+}(y))}{t\,u^{+}(y)}=0$ whenever $u^{+}(y)=0$.
Denote the right-hand side of \eqref{eq2.3} by $R(t)$.

Using $(f_{3})$, there exists $\theta>1$ such that $t f(t)\ge \theta F(t)\ge 0$ for all $t>0$. Hence,
for every $a>0$ the function
\[
t\mapsto \frac{F(ta)}{ta}
\]
is nondecreasing on $(0,\infty)$, and it is strictly increasing on any interval where $F(ta)>0$.
Moreover, by $(f_{4})$, for every $b\ge 0$ the function $t\mapsto f(tb)$ is nondecreasing on $(0,\infty)$.
Therefore, for a.e. $(x,y)$ the integrand in \eqref{eq2.3} is nondecreasing in $t$, and consequently $R(t)$ is nondecreasing on $(0,\infty)$.

Assume by contradiction that there exist $0<t_{1}<t_{2}$ such that $h'(t_{1})=h'(t_{2})=0$.
Then $R(t_{1})=R(t_{2})=\|u\|_{\varepsilon}^{2}>0$.
In particular, the set
\[
E:=\Bigl\{(x,y)\in\mathbb{R}^{2}\times\mathbb{R}^{2}:\ 
u^{+}(x)u^{+}(y)>0,\ f(t_{1}u^{+}(x))>0,\ F(t_{1}u^{+}(y))>0\Bigr\}
\]
has positive measure, otherwise the integrand in \eqref{eq2.3} would vanish a.e. and $R(t_{1})=0$, a contradiction.
For every $(x,y)\in E$, we have $u^{+}(y)>0$ and $F(t_{1}u^{+}(y))>0$, hence
\[
\frac{F(t_{2}u^{+}(y))}{t_{2}u^{+}(y)}>\frac{F(t_{1}u^{+}(y))}{t_{1}u^{+}(y)}.
\]
Also $f(tu^{+}(x))$ is nondecreasing and $f(t_{1}u^{+}(x))>0$ on $E$, hence
\[
f(t_{2}u^{+}(x))\ge f(t_{1}u^{+}(x))>0.
\]
It follows that the integrand in \eqref{eq2.3} is strictly larger at $t_{2}$ than at $t_{1}$ on $E$.
Integrating over $\mathbb{R}^{2}\times\mathbb{R}^{2}$ yields $R(t_{2})>R(t_{1})$, contradicting $R(t_{2})=R(t_{1})$.
Therefore the equation $h'(t)=0$ admits at most one solution $t>0$, and the corresponding $t(u)$ is unique.
\end{proof}

Next, we define the numbers
\[
c_{\varepsilon}^{*}
=\inf_{u\in \mathcal{N}_{\varepsilon}}\mathcal{J}_{\varepsilon}(u),
\qquad
c_{\varepsilon}^{**}
=\inf_{u\in W_{\varepsilon}\setminus\{0\}}\ \max_{t\ge0}\mathcal{J}_{\varepsilon}(tu).
\]

\begin{Lem}\label{Lem2.8}
	For any fixed $\varepsilon>0$ one has
	\[
	c_{\varepsilon}=c_{\varepsilon}^{*}=c_{\varepsilon}^{**}.
	\]
\end{Lem}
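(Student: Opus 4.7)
The plan is to prove the chain $c_\varepsilon \le c_\varepsilon^{**} \le c_\varepsilon^{*} \le c_\varepsilon$. The first two inequalities follow from Lemma~\ref{Lem2.7} (the Nehari projection along rays) combined with the mountain-pass geometry of Lemma~\ref{Lem2.6}; the final one, which forces every path in $\Gamma$ to cross the Nehari manifold, is the delicate step.

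For $c_\varepsilon^{**} \le c_\varepsilon^{*}$: every $u \in \mathcal{N}_\varepsilon$ satisfies $u^+ \not\equiv 0$, since otherwise $f(u)\equiv 0$ a.e.\ and $\mathcal{G}(u)=\|u\|_\varepsilon^2>0$, contradicting $u\in\mathcal{N}_\varepsilon$. Hence Lemma~\ref{Lem2.7} applies with $t(u)=1$ and $\max_{t\ge 0}\mathcal{J}_\varepsilon(tu)=\mathcal{J}_\varepsilon(u)$, and taking the infimum yields $c_\varepsilon^{**}\le c_\varepsilon^{*}$. Conversely, for any $u\in W_\varepsilon\setminus\{0\}$, either $u^+\not\equiv 0$ and Lemma~\ref{Lem2.7} produces $t(u)u\in\mathcal{N}_\varepsilon$ with $\max_{t\ge 0}\mathcal{J}_\varepsilon(tu)=\mathcal{J}_\varepsilon(t(u)u)\ge c_\varepsilon^{*}$, or $u^+\equiv 0$ and $\mathcal{J}_\varepsilon(tu)=\tfrac{t^{2}}{2}\|u\|_\varepsilon^{2}\to+\infty$, so the maximum trivially exceeds $c_\varepsilon^{*}$. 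For $c_\varepsilon\le c_\varepsilon^{**}$, given any $u$ with $u^+\not\equiv 0$ I pick $T>t(u)$ so large that $\mathcal{J}_\varepsilon(Tu)<0$ (possible by the argument of Lemma~\ref{Lem2.6}(ii), since $F(tu)=F(tu^+)$ drives the functional to $-\infty$ like $t^{2\theta}$ with $\theta>1$). Then $g(\sigma)=\sigma Tu$ lies in $\Gamma$ and $\sup_{\sigma\in[0,1]}\mathcal{J}_\varepsilon(g(\sigma))=\max_{t\ge 0}\mathcal{J}_\varepsilon(tu)$, so $c_\varepsilon\le c_\varepsilon^{**}$.

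The main obstacle is $c_\varepsilon^{*}\le c_\varepsilon$: I would show that each $g\in\Gamma$ intersects $\mathcal{N}_\varepsilon$. The same Hardy--Littlewood--Sobolev and Trudinger--Moser estimates used for Lemma~\ref{Lem2.6}(i), applied now to $\int(|x|^{-\mu}*F(u))f(u)u\,dx$ (and using that $(f_1)$--$(f_2)$ bound $|f(t)t|$ in the same way as $|F(t)|$), yield $\rho'>0$ such that $\mathcal{G}(u)\ge\tfrac{1}{2}\|u\|_\varepsilon^{2}>0$ whenever $0<\|u\|_\varepsilon\le\rho'$. Moreover, by $(f_3)$, non-negativity of $I_\mu * F(u)$, and $F(u)\ge 0$,
\[
\mathcal{G}(u)-2\mathcal{J}_\varepsilon(u)=\int_{\R^{2}}\left(\tfrac{1}{|x|^{\mu}}*F(u)\right)\bigl[F(u)-f(u)u\bigr]\,dx\le (1-\theta)\int_{\R^{2}}\left(\tfrac{1}{|x|^{\mu}}*F(u)\right)F(u)\,dx\le 0,
\]
so $\mathcal{J}_\varepsilon(g(1))<0$ forces $\mathcal{G}(g(1))<0$. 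Set $\tau_{\max}=\max\{\tau\in[0,1]:g(\tau)=0\}<1$; by continuity of $g$ there exists $\tau_{1}>\tau_{\max}$ with $0<\|g(\tau_{1})\|_\varepsilon\le\rho'$, hence $\mathcal{G}(g(\tau_{1}))>0$. The intermediate value theorem applied to the continuous map $\tau\mapsto\mathcal{G}(g(\tau))$ on $[\tau_{1},1]$ then produces $\tau^{*}\in(\tau_{1},1)$ with $\mathcal{G}(g(\tau^{*}))=0$ and $g(\tau^{*})\neq 0$ (since $\tau^{*}>\tau_{\max}$), so $g(\tau^{*})\in\mathcal{N}_\varepsilon$. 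Therefore $\sup_{\sigma\in[0,1]}\mathcal{J}_\varepsilon(g(\sigma))\ge\mathcal{J}_\varepsilon(g(\tau^{*}))\ge c_\varepsilon^{*}$, and taking the infimum over $\Gamma$ closes the chain.
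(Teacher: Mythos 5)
Your proposal is correct and follows essentially the same route as the paper: the identification $c_{\varepsilon}^{*}=c_{\varepsilon}^{**}$ via the unique Nehari projection of Lemma~\ref{Lem2.7}, the inequality $c_{\varepsilon}\le c_{\varepsilon}^{**}$ via the ray path $\sigma\mapsto\sigma T u$, and $c_{\varepsilon}^{*}\le c_{\varepsilon}$ by showing every $g\in\Gamma$ crosses $\mathcal{N}_{\varepsilon}$ through the sign change of $\mathcal{G}\circ g$ and the intermediate value theorem. Your write-up is in fact a bit more careful than the paper's on two minor points — the edge case $u^{+}\equiv0$ in the definition of $c_{\varepsilon}^{**}$, and ensuring the crossing point $g(\tau^{*})$ is nonzero via $\tau_{\max}$ — but the underlying argument is the same.
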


\begin{proof}
	First, Lemma~\ref{Lem2.7} implies that for each $u\in W_{\varepsilon}\setminus\{0\}$ one has $t(u)u\in\mathcal{N}_{\varepsilon}$ and
	\[
	\max_{t\ge0}\mathcal{J}_{\varepsilon}(tu)=\mathcal{J}_{\varepsilon}(t(u)u).
	\]
	Hence
	\[
	c_{\varepsilon}^{**}
	=\inf_{u\in W_{\varepsilon}\setminus\{0\}} \mathcal{J}_{\varepsilon}(t(u)u)
	\ge \inf_{w\in\mathcal{N}_{\varepsilon}} \mathcal{J}_{\varepsilon}(w)
	=c_{\varepsilon}^{*}.
	\]
	Conversely, for every $w\in\mathcal{N}_{\varepsilon}$, using Lemma~\ref{Lem2.7} again,we have $\max_{t\ge0}\mathcal{J}_{\varepsilon}(tu)= \mathcal{J}_{\varepsilon}(w)$, hence $$c_{\varepsilon}^{**}\le \max_{t\ge0}\mathcal{J}_{\varepsilon}(tu)= \mathcal{J}_{\varepsilon}(w) \leq c_{\varepsilon}^{*}$$ Therefore $c_{\varepsilon}^{**}=c_{\varepsilon}^{*}$.
	
	To compare with $c_{\varepsilon}$, let $g\in\Gamma$. Since $\mathcal{G}(g(0))=0$ and $\mathcal{J}_{\varepsilon}(g(1))<0$, one has
	\[
	\Psi(g(1))>\|g(1)\|_{\varepsilon}^{2}.
	\]
	Using $(f_{3})$ we obtain
	\[
	\int_{\R^{2}}\left(\frac{1}{|x|^{\mu}}*F(g(1))\right)f(g(1))g(1)\,dx
	\ge \theta\,\Psi(g(1)),
	\]
	hence
	\[
	\mathcal{G}(g(1))
	\le \|g(1)\|_{\varepsilon}^{2}-\theta\,\Psi(g(1))
	< (1-\theta)\|g(1)\|_{\varepsilon}^{2}<0.
	\]
	Similar to lemma~\ref{Lem2.6}$(i)$, there exists $g(\sigma)>0$ sufficiently small, such that $\mathcal{G}(g(\sigma))>0$ , By continuity of $\mathcal{G}\circ g$, there exists $t_{0}\in(0,1)$ such that $g(t_{0})\in\mathcal{N}_{\varepsilon}$. Then
	\[
	\sup_{t\in[0,1]}\mathcal{J}_{\varepsilon}(g(t))\ge \mathcal{J}_{\varepsilon}(g(t_{0}))\ge c_{\varepsilon}^{*}.
	\]
	Taking the infimum over $g\in\Gamma$ yields $c_{\varepsilon}\ge c_{\varepsilon}^{*}$.
	
	On the other hand, fix $u\in W_{\varepsilon}\setminus\{0\}$ and let $t(u)>0$ be given by Lemma~\ref{Lem2.7}. Since $\mathcal{J}_{\varepsilon}(tu)\to-\infty$ as $t\to+\infty$, we can choose $T(u)>t(u)$ such that $\mathcal{J}_{\varepsilon}(T(u)u)<0$. Define $g_u(t)=t\,T(u)u$. Then $g_u\in\Gamma$ and
	\[
	\sup_{t\in[0,1]}\mathcal{J}_{\varepsilon}(g_u(t))
	=\max_{s\in[0,T(u)]}\mathcal{J}_{\varepsilon}(su)
	=\mathcal{J}_{\varepsilon}(t(u)u)
	=\max_{s\ge0}\mathcal{J}_{\varepsilon}(su).
	\]
	Taking the infimum over $u\ne0$ gives
	\[
	c_{\varepsilon}\le \inf_{u\in W_{\varepsilon}\setminus\{0\}}\max_{s\ge0}\mathcal{J}_{\varepsilon}(su)
	=c_{\varepsilon}^{**}.
	\]
	Therefore $c_{\varepsilon}=c_{\varepsilon}^{*}=c_{\varepsilon}^{**}$.
\end{proof}

\section{Estimates for the minimax level}

In this section we introduce an autonomous limit problem and its variational structure, which will be used to compare the minimax level $c_{\varepsilon}$ with a reference level in the semiclassical regime.

Let $a>0$ be a constant. We consider the autonomous Choquard problem
\begin{equation}\label{eq3.1}
	\begin{cases}
		-\Delta u+(-\Delta)^{s}u+a\,u
		=\left(\dfrac{1}{|x|^{\mu}}*F(u)\right)f(u)
		& \text{in } \R^{2},\\[4pt]
		u\in H^{1}(\R^{2}),\quad u>0
		& \text{in } \R^{2}.
	\end{cases}
\end{equation}
Since $H^{1}(\R^{2})$ is continuously embedded into $H^{s}(\R^{2})$, the Gagliardo term is finite for every $u\in H^{1}(\R^{2})$ and the natural energy space is $W_{a}=H^{1}(\R^{2})$ endowed with the norm
\[
\|u\|_{a}^{2}
=\int_{\R^{2}}|\nabla u|^{2}\,dx
+\frac12\int_{\R^{2}}\int_{\R^{2}}
\frac{|u(x)-u(y)|^{2}}{|x-y|^{2+2s}}\,dx\,dy
+\int_{\R^{2}}a\,u^{2}\,dx.
\]

The variational functional associated with \eqref{eq3.1} is
\[
\mathcal{I}_{a}(u)
=\frac{1}{2}\|u\|_{a}^{2}
-\frac{1}{2}\int_{\R^{2}}\left(\frac{1}{|x|^{\mu}}*F(u)\right)F(u)\,dx,
\]
where $F(t)=\int_{0}^{t}f(\tau)\,d\tau$. Then $\mathcal{I}_{a}\in C^{1}(W_{a},\R)$ and its derivative satisfies
\[
\bigl\langle \mathcal{I}_{a}'(u),u\bigr\rangle
=\|u\|_{a}^{2}
-\int_{\R^{2}}\left(\frac{1}{|x|^{\mu}}*F(u)\right)f(u)\,u\,dx.
\]

We define the Nehari manifold associated with $\mathcal{I}_{a}$ by
\[
\mathcal{N}_{a}
=\bigl\{u\in W_{a}\setminus\{0\} :
\bigl\langle \mathcal{I}_{a}'(u),u\bigr\rangle=0\bigr\},
\]
and the corresponding level by
\begin{equation}\label{eq3.2}
	c_{a}
	=\inf_{u\in\mathcal{N}_{a}}\mathcal{I}_{a}(u).
\end{equation}
The basic properties of $c_{a}$ and $\mathcal{N}_{a}$ are analogous to those of $c_{\varepsilon}$ and $\mathcal{N}_{\varepsilon}$.

\begin{Lem}\label{Lem3.1}
	Assume that $(V)$ and $(f_1)$--$(f_6)$ hold. Then the level $c_{a}$ satisfies
	\begin{equation}\label{eq3.3}
		c_{a}<\frac{4-\mu}{8}\bigl(1+C_{s}\bigr).
	\end{equation}
\end{Lem}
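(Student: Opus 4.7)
The plan is to estimate $c_a$ from above by evaluating $\max_{t\ge 0}\mathcal{I}_a(t\cdot)$ along the classical Moser sequence concentrated at the origin in $B_\rho$, and to check that assumption $(f_5)$ is calibrated so that this maximum falls strictly below $(4-\mu)(1+C_s)/8$. Concretely, for $n\ge 2$ I would introduce
\[
\tilde w_n(x)=\frac{1}{\sqrt{2\pi}}
\begin{cases}
(\log n)^{1/2}, & |x|\le \rho/n,\\
\log(\rho/|x|)/(\log n)^{1/2}, & \rho/n<|x|<\rho,\\
0, & |x|\ge \rho.
\end{cases}
\]
A polar-coordinate computation gives $\|\nabla\tilde w_n\|_2^2=1$ and $\|\tilde w_n\|_2^2=(\rho^2/(4\log n))(1+o(1))$. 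Combined with Lemma~\ref{Lem2.1}, which yields $\tfrac12[\tilde w_n]_s^2\le C_s(1+\|\tilde w_n\|_2^2)$, this gives $\|\tilde w_n\|_a^2\le 1+C_s+(a+C_s)\|\tilde w_n\|_2^2=1+C_s+o(1)$. By the analogue of Lemma~\ref{Lem2.7} for $\mathcal{I}_a$, there is a unique $T_n>0$ with $T_n\tilde w_n\in\mathcal{N}_a$ and $\max_{t\ge 0}\mathcal{I}_a(t\tilde w_n)=\mathcal{I}_a(T_n\tilde w_n)$, so the task reduces to proving $\mathcal{I}_a(T_n\tilde w_n)<(4-\mu)(1+C_s)/8$ for some $n$.

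I argue by contradiction, assuming $\mathcal{I}_a(T_n\tilde w_n)\ge(4-\mu)(1+C_s)/8$ for all large $n$. Since the Choquard term is nonnegative, $\mathcal{I}_a(T_n\tilde w_n)\le\tfrac12 T_n^2\|\tilde w_n\|_a^2$, so $T_n^2\|\tilde w_n\|_a^2\ge(4-\mu)(1+C_s)/4$, and dividing by the upper bound on $\|\tilde w_n\|_a^2$ gives
\[
T_n^2\ge\frac{(4-\mu)(1+C_s)}{4\bigl(1+C_s+(a+C_s)\|\tilde w_n\|_2^2\bigr)}=\frac{4-\mu}{4}+o(1).
\]
Since $T_n\tilde w_n$ equals $c_n:=T_n(\log n/(2\pi))^{1/2}$ on $B_{\rho/n}$ and $c_n\to\infty$, for any $\delta\in(0,\beta)$ assumption $(f_5)$ yields $c_nf(c_n)F(c_n)\ge(\beta-\delta)e^{8\pi c_n^2}=(\beta-\delta)n^{4T_n^2}$ once $n$ is large. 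Restricting the Nehari identity
\[
T_n^2\|\tilde w_n\|_a^2=\iint_{\R^{2}\times\R^{2}}\frac{F(T_n\tilde w_n(y))\,f(T_n\tilde w_n(x))\,T_n\tilde w_n(x)}{|x-y|^\mu}\,dx\,dy
\]
to $B_{\rho/n}\times B_{\rho/n}$ and using the rescaling $\iint_{B_{\rho/n}\times B_{\rho/n}}|x-y|^{-\mu}\,dxdy=I_\mu(\rho/n)^{4-\mu}$, where $I_\mu:=\iint_{B_1\times B_1}|\xi-\eta|^{-\mu}\,d\xi d\eta$, together with the elementary bound
\[
I_\mu\ge\int_{B_1}\int_{B_{1-|\xi|}(0)}|z|^{-\mu}\,dz\,d\xi=\frac{4\pi^2}{(2-\mu)(3-\mu)(4-\mu)},
\]
I obtain $T_n^2\|\tilde w_n\|_a^2\ge(\beta-\delta)I_\mu\rho^{4-\mu}n^{4T_n^2-(4-\mu)}$.

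Two observations then close the argument. First, this polynomial lower bound forces $T_n$ to be bounded and $T_n^2\le(4-\mu)/4+o(1)$, since otherwise the right-hand side diverges polynomially in $n$ while the left-hand side is $O(T_n^2)$; combined with the earlier lower bound, $T_n^2\to(4-\mu)/4$ and $T_n^2\|\tilde w_n\|_a^2\to(4-\mu)(1+C_s)/4$. Second, using $1+C_s+(a+C_s)\|\tilde w_n\|_2^2\ge 1$ in the explicit lower bound on $T_n^2$ gives $4T_n^2-(4-\mu)\ge-(4-\mu)(a+C_s)\|\tilde w_n\|_2^2$, and then the asymptotic $\|\tilde w_n\|_2^2\log n\to\rho^2/4$ produces $\liminf_{n\to\infty}n^{4T_n^2-(4-\mu)}\ge e^{-(4-\mu)(a+C_s)\rho^2/4}$. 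Passing to the limit first in $n$ and then in $\delta$ in the Nehari lower bound yields
\[
\frac{(4-\mu)(1+C_s)}{4}\ge \beta\,I_\mu\,\rho^{4-\mu}\,e^{-(4-\mu)(a+C_s)\rho^2/4},
\]
so that, by the lower bound on $I_\mu$,
\[
\beta\le\frac{(2-\mu)(3-\mu)(4-\mu)^2(1+C_s)}{16\pi^2\rho^{4-\mu}}\,e^{(4-\mu)(a+C_s)\rho^2/4},
\]
contradicting $(f_5)$. The delicate point is this final chain of inequalities: because $4T_n^2-(4-\mu)=o(1)$ only at rate $1/\log n$, the factor $n^{4T_n^2-(4-\mu)}$ produces a nontrivial finite limit of exactly the exponential form appearing in $(f_5)$, and the constant in $(f_5)$ is engineered so as to be contradicted by this limit.
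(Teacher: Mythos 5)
Your proposal is correct and follows essentially the same route as the paper: the same Moser sequence on $B_\rho$, the same use of Lemma~\ref{Lem2.1} to bound the Gagliardo term, the same contradiction argument pinning $T_n^2$ between $\frac{4-\mu}{4}(1+o(1))$ and $\frac{4-\mu}{4}(1+C_s)/(1+C_s+o(1))$, the same restriction of the Nehari identity to $B_{\rho/n}\times B_{\rho/n}$ with the identical lower bound $\frac{4\pi^2}{(2-\mu)(3-\mu)(4-\mu)}R^{4-\mu}$ on the double integral, and the same limit $\|\tilde w_n\|_2^2\log n\to\rho^2/4$ producing the exponential constant that contradicts $(f_5)$. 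The only differences are cosmetic (you keep the Moser functions unnormalized and carry the factor $\|\tilde w_n\|_a^2$ explicitly, whereas the paper normalizes first), so no further comment is needed.
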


\begin{proof}
	Let $\rho>0$ be the constant appearing in $(f_5)$. We introduce the following Moser-type functions $\bar w_{n}$ supported in $B_{\rho}(0)$ (see \cite{2016AlvesJDF}):
	\[
	\bar w_{n}(x)
	=\frac{1}{\sqrt{2\pi}}
	\begin{cases}
		\sqrt{\log n}, & 0\le |x|\le \dfrac{\rho}{n},\\[3pt]
		\dfrac{\log(\rho/|x|)}{\sqrt{\log n}}, & \dfrac{\rho}{n}\le |x|\le \rho,\\[3pt]
		0, & |x|\ge \rho.
	\end{cases}
	\]
	A direct computation gives
	\[
	\int_{\R^{2}}|\nabla\bar w_{n}|^{2}\,dx
	=\int_{\rho/n}^{\rho}\frac{1}{r\log n}\,dr
	=1
	\]
	and, using polar coordinates,
	\[
	\begin{aligned}
		\int_{\R^{2}}|\bar w_{n}|^{2}\,dx
		&=\int_{0}^{\rho/n} r\log n\,dr
		+\int_{\rho/n}^{\rho}\frac{r\,\log^{2}(\rho/r)}{\log n}\,dr\\
		&=\rho^{2}\left(\frac{1}{4\log n}
		-\frac{1}{4n^{2}\log n}
		-\frac{1}{2n^{2}}\right).
	\end{aligned}
	\]
	We set
	\[
	\delta_{n}
	=\rho^{2}\left(\frac{1}{4\log n}
	-\frac{1}{4n^{2}\log n}
	-\frac{1}{2n^{2}}\right).
	\]
	
	By Lemma~\ref{Lem2.1} and the definition of $\|\cdot\|_{a}$, we get
	\[
	\begin{aligned}
		\|\bar w_{n}\|_{a}^{2}
		&=\int_{\R^{2}}|\nabla \bar w_{n}|^{2}\,dx
		+\int_{\R^{2}}a\,\bar w_{n}^{2}\,dx
		+\frac12\int_{\R^{2}}\int_{\R^{2}}
		\frac{|\bar w_{n}(x)-\bar w_{n}(y)|^{2}}{|x-y|^{2+2s}}\,dx\,dy\\
		&\le 1+a\,\delta_{n}
		+C_{s}\bigl(1+\delta_{n}\bigr)\\
		&=1+C_{s}+\bigl(a+C_{s}\bigr)\delta_{n}.
	\end{aligned}
	\]
	Define
	\[
	w_{n}(x)
	=\frac{\bar w_{n}(x)}{\sqrt{1+C_{s}+(a+C_{s})\delta_{n}}}.
	\]
	Then
	\begin{equation}\label{eq3.4}
		\|w_{n}\|_{a}^{2}\le 1.
	\end{equation}
	
	To prove \eqref{eq3.3}, it is enough to show that there exists $n$ such that
	\begin{equation}\label{eq3.5}
		\max_{t\ge0}\mathcal{I}_{a}(t w_{n})
		<\frac{4-\mu}{8}\bigl(1+C_{s}\bigr).
	\end{equation}
	Arguing by contradiction, assume that \eqref{eq3.5} fails. Then, for every $n$, there exists $t_{n}>0$ such that
	\begin{equation}\label{eq3.6}
		\max_{t\ge0}\mathcal{I}_{a}(t w_{n})
		=\mathcal{I}_{a}(t_{n}w_{n})
		\ge\frac{4-\mu}{8}\bigl(1+C_{s}\bigr),
	\end{equation}
	and $t_{n}$ satisfies
	\[
	\left.\frac{d}{dt}\mathcal{I}_{a}(t w_{n})\right|_{t=t_{n}}=0.
	\]
	Computing the derivative, we obtain
	\begin{equation}\label{eq3.7}
		t_{n}^{2}\|w_{n}\|_{a}^{2}
		=\int_{\R^{2}}\left(\frac{1}{|x|^{\mu}}*F(t_{n}w_{n})\right)
		f(t_{n}w_{n})\,t_{n}w_{n}\,dx.
	\end{equation}
	
	From \eqref{eq3.6} and the fact that the Choquard term is nonnegative, we have
	\[
	\frac{1}{2}t_{n}^{2}\|w_{n}\|_{a}^{2}
	\ge \frac{4-\mu}{8}\bigl(1+C_{s}\bigr),
	\]
	so by \eqref{eq3.4},
	\begin{equation}\label{eq3.8}
		t_{n}^{2}
		\ge \frac{4-\mu}{4}\bigl(1+C_{s}\bigr).
	\end{equation}
	
	Next we use $(f_{5})$. By the definition of $\beta$ in $(f_{5})$, for every $\varepsilon>0$ there exists $t_{\varepsilon}>0$ such that
	\begin{equation}\label{eq3.9}
		tf(t)\,F(t)\ge (\beta-\varepsilon)\,\mathrm{e}^{8\pi t^{2}}
		\quad\text{for all }t\ge t_{\varepsilon}.
	\end{equation}
	On $B_{\rho/n}$ the function $w_{n}$ is constant and equal to
	\[
	w_{n}
	=\frac{1}{\sqrt{2\pi}}
	\frac{\sqrt{\log n}}{\sqrt{1+C_{s}+(a+C_{s})\delta_{n}}}.
	\]
	Combining this with \eqref{eq3.8} and $\delta_n\to0$, we have $t_{n}w_{n}\to+\infty$ on $B_{\rho/n}$ as $n\to\infty$, and thus $t_{n}w_{n}\ge t_{\varepsilon}$ there for $n$ large.
	
	Using \eqref{eq3.7}, \eqref{eq3.9} and restricting both integrals in the convolution to $B_{\rho/n}$, we obtain
	\[
	\begin{aligned}
		t_{n}^{2}
		&\ge t_{n}^{2}\|w_{n}\|_{a}^{2}\\
		&=\int_{\R^{2}}\left(\frac{1}{|x|^{\mu}}*F(t_{n}w_{n})\right)
		f(t_{n}w_{n})\,t_{n}w_{n}\,dx\\
		&\ge \int_{B_{\rho/n}}\left(\int_{B_{\rho/n}}\frac{F(t_{n}w_{n})}{|x-y|^{\mu}}\,dy\right)
		f(t_{n}w_{n})\,t_{n}w_{n}\,dx\\
		&= t_{n}w_{n}f(t_{n}w_{n})F(t_{n}w_{n})
		\int_{B_{\rho/n}}\int_{B_{\rho/n}}\frac{1}{|x-y|^{\mu}}\,dx\,dy\\
		&\ge (\beta-\varepsilon)\,\mathrm{e}^{8\pi(t_{n}w_{n})^{2}}
		\int_{B_{\rho/n}}\int_{B_{\rho/n}}\frac{1}{|x-y|^{\mu}}\,dx\,dy.
	\end{aligned}
	\]
	
	Let $R=\rho/n$. For $x\in B_{R}(0)$ one has $B_{R-|x|}(0)\subset B_{R}(x)$, hence
	\[
	\begin{aligned}
		\int_{B_{R}}\int_{B_{R}}\frac{1}{|x-y|^{\mu}}\,dx\,dy
		&=\int_{B_{R}}dx\int_{B_{R}(x)}\frac{1}{|z|^{\mu}}\,dz\\
		&\ge \int_{B_{R}}dx\int_{B_{R-|x|}}\frac{1}{|z|^{\mu}}\,dz\\
		&=\frac{2\pi}{2-\mu}\int_{B_{R}}\bigl(R-|x|\bigr)^{2-\mu}\,dx\\
		&=\frac{4\pi^{2}}{2-\mu}\int_{0}^{R}(R-r)^{2-\mu}r\,dr\\
		&=\frac{4\pi^{2}}{(2-\mu)(3-\mu)(4-\mu)}\,R^{4-\mu}.
	\end{aligned}
	\]
	Setting
	\[
	D_{\mu}=\frac{4\pi^{2}}{(2-\mu)(3-\mu)(4-\mu)},
	\]
	we obtain
	\[
	\int_{B_{\rho/n}}\int_{B_{\rho/n}}\frac{1}{|x-y|^{\mu}}\,dx\,dy
	\ge D_{\mu}\left(\frac{\rho}{n}\right)^{4-\mu}.
	\]
	
	Moreover,
	\[
	8\pi(t_{n}w_{n})^{2}
	=8\pi t_{n}^{2}\frac{\log n}{2\pi\bigl(1+C_{s}+(a+C_{s})\delta_{n}\bigr)}
	=\frac{4t_{n}^{2}\log n}{1+C_{s}+(a+C_{s})\delta_{n}}.
	\]
	Hence
	\[
	t_{n}^{2}
	\ge (\beta-\varepsilon)D_{\mu}\rho^{4-\mu}
	\exp\left(
	\log n\left[
	\frac{4t_{n}^{2}}{1+C_{s}+(a+C_{s})\delta_{n}}-(4-\mu)
	\right]
	\right). 
	\]
	 this means $t_n$ is bounded.Thus, exists a constant $C_{1}>0$ such that
	\[
	\log n\left[
	\frac{4t_{n}^{2}}{1+C_{s}+(a+C_{s})\delta_{n}}-(4-\mu)
	\right]\le C_{1}
	\]
	for all $n$, which gives
	\begin{equation}\label{eq3.10}
		t_{n}^{2}
		\le \frac{4-\mu}{4}\Bigl(1+C_{s}+(a+C_{s})\delta_{n}\Bigr)
		+\frac{C_{2}}{\log n}
	\end{equation}
	for some constant $C_{2}>0$. Combining \eqref{eq3.8} and \eqref{eq3.10}, we obtain
	\begin{equation}\label{eq3.11}
		t_{n}^{2}
		=\frac{4-\mu}{4}\bigl(1+C_{s}\bigr)+o(1)
		\quad\text{as }n\to\infty.
	\end{equation}
	
	We now refine the lower bound on $t_{n}^{2}$. Set
	\[
	A_{n}=\{x\in B_{\rho} : t_{n}w_{n}(x)\ge t_{\varepsilon}\},
	\qquad
	B_{n}=B_{\rho}\setminus A_{n}.
	\]
	Since $w_n$ is supported in $B_\rho$, from \eqref{eq3.7} we have
	\[
	\begin{aligned}
		t_{n}^{2}
		&\ge t_{n}^{2}\|w_{n}\|_{a}^{2}\\
		&=\int_{B_{\rho}}\left(\frac{1}{|x|^{\mu}}*F(t_{n}w_{n})\right)
		f(t_{n}w_{n})\,t_{n}w_{n}\,dx\\
		&=\int_{A_{n}}\left(\frac{1}{|x|^{\mu}}*F(t_{n}w_{n})\right)
		f(t_{n}w_{n})\,t_{n}w_{n}\,dx\\
		&\quad+\int_{B_{n}}\left(\frac{1}{|x|^{\mu}}*F(t_{n}w_{n})\right)
		f(t_{n}w_{n})\,t_{n}w_{n}\,dx.
	\end{aligned}
	\]
	We claim that the contribution from $B_{n}$ tends to $0$ as $n\to\infty$. Indeed, by Lemma~\ref{Lem2.3} with $p=\frac{4}{4-\mu}$ and H\"older's inequality, there exists $C_{\mathrm{HLS}}>0$ such that
	\[
	\begin{aligned}
		\int_{B_{n}}\left(\frac{1}{|x|^{\mu}}*F(t_{n}w_{n})\right)
		f(t_{n}w_{n})\,t_{n}w_{n}\,dx
		&\le C_{\mathrm{HLS}}\,
		\|F(t_{n}w_{n})\|_{p}\,
		\|\chi_{B_{n}}t_{n}w_{n}f(t_{n}w_{n})\|_{p}.
	\end{aligned}
	\]
	By \eqref{eq3.10} and \eqref{eq3.4}, the sequence $\{t_{n}w_{n}\}$ is bounded in $H^{1}(\R^{2})$. Using \eqref{eq2.2}, Proposition~\ref{pro2.2} and the Sobolev embedding $H^{1}(\R^{2})\hookrightarrow L^{m}(\R^{2})$ for $m\ge2$ as in Lemma~\ref{Lem2.5}, we infer that $\|F(t_{n}w_{n})\|_{p}\le C$.
	
	Moreover, $w_{n}(x)\to0$ for a.e. $x\in B_{\rho}$ and $\{t_n\}$ is bounded, hence $t_{n}w_{n}(x)\to0$ for a.e. $x\in B_{\rho}$. On $B_{n}$ one has $|t_{n}w_{n}|\le t_{\varepsilon}$, so $\chi_{B_{n}}t_{n}w_{n}f(t_{n}w_{n})\to0$ a.e. in $B_{\rho}$, and it is dominated by a constant function in $L^{p}(B_{\rho})$. The dominated convergence theorem implies
	\[
	\|\chi_{B_{n}}t_{n}w_{n}f(t_{n}w_{n})\|_{p}\to0,
	\]
	so the integral over $B_{n}$ converges to $0$.
	
	Hence
	\[
	t_{n}^{2}
	\ge \int_{A_{n}}\left(\frac{1}{|x|^{\mu}}*F(t_{n}w_{n})\right)
	f(t_{n}w_{n})\,t_{n}w_{n}\,dx+o(1).
	\]
	On $A_{n}$ we have $t_{n}w_{n}\ge t_{\varepsilon}$, so by \eqref{eq3.9},
	\[
	t_{n}w_{n}f(t_{n}w_{n})F(t_{n}w_{n})
	\ge (\beta-\varepsilon)\,\mathrm{e}^{8\pi(t_{n}w_{n})^{2}}.
	\]
	Therefore,
	\[
	\begin{aligned}
		t_{n}^{2}
		&\ge \int_{B_{\rho/n}}\left(\int_{B_{\rho/n}}\frac{F(t_{n}w_{n})}{|x-y|^{\mu}}\,dy\right)
		f(t_{n}w_{n})\,t_{n}w_{n}\,dx+o(1)\\
		&= t_{n}w_{n}f(t_{n}w_{n})F(t_{n}w_{n})
		\int_{B_{\rho/n}}\int_{B_{\rho/n}}\frac{1}{|x-y|^{\mu}}\,dx\,dy+o(1)\\
		&\ge (\beta-\varepsilon)\,\mathrm{e}^{8\pi(t_{n}w_{n})^{2}}
		\int_{B_{\rho/n}}\int_{B_{\rho/n}}\frac{1}{|x-y|^{\mu}}\,dx\,dy+o(1).
	\end{aligned}
	\]
	Using again the estimate of the double integral and the expression of $w_{n}$ on $B_{\rho/n}$, we obtain
	\[
	t_{n}^{2}
	\ge (\beta-\varepsilon)D_{\mu}\rho^{4-\mu}
	\exp\left(
	\log n\left[
	\frac{4t_{n}^{2}}{1+C_{s}+(a+C_{s})\delta_{n}}-(4-\mu)
	\right]
	\right)+o(1).
	\]
	By \eqref{eq3.8},
	\[
	\frac{4t_{n}^{2}}{1+C_{s}+(a+C_{s})\delta_{n}}-(4-\mu)
	\ge (4-\mu)\left[\frac{1+C_{s}}{1+C_{s}+(a+C_{s})\delta_{n}}-1\right]
	=-(4-\mu)\frac{(a+C_{s})\delta_{n}}{1+C_{s}+(a+C_{s})\delta_{n}},
	\]
	and since $1+C_{s}+(a+C_{s})\delta_{n}\ge 1$ we obtain
	\[
	\frac{4t_{n}^{2}}{1+C_{s}+(a+C_{s})\delta_{n}}-(4-\mu)
	\ge -(4-\mu)(a+C_{s})\delta_{n}.
	\]
	Consequently,
	\[
	t_{n}^{2}
	\ge (\beta-\varepsilon) D_{\mu} \rho^{4-\mu}
	\exp\bigl(-(4-\mu)(a+C_{s})\,\delta_{n}\log n\bigr)+o(1).
	\]
	Since $\delta_{n}\log n=\frac{\rho^{2}}{4}+o(1)$, combining with \eqref{eq3.11} and letting $n\to\infty$ yields
	\[
	\frac{4-\mu}{4}\bigl(1+C_{s}\bigr)
	\ge (\beta-\varepsilon)D_{\mu}\rho^{4-\mu}
	\mathrm{e}^{-\frac{4-\mu}{4}(a+C_{s})\rho^{2}}.
	\]
	Since $\varepsilon>0$ is arbitrary,
	\[
	\beta
	\le \frac{(2-\mu)(3-\mu)(4-\mu)^{2}(1+C_{s})}{16\pi^{2}\rho^{4-\mu}}
	\mathrm{e}^{\frac{4-\mu}{4}(a+C_{s})\rho^{2}},
	\]
	which contradicts assumption $(f_{5})$. Therefore \eqref{eq3.5} holds for some $n$, and in particular
	\[
	c_{a}
	\le \max_{t\ge0}\mathcal{I}_{a}(t w_{n})
	<\frac{4-\mu}{8}\bigl(1+C_{s}\bigr),
	\]
	which proves \eqref{eq3.3}.
\end{proof}

\section{Ground state solution of the autonomous problem}

\begin{Lem}\label{Lem4.1}
	Assume that $(f_{1})$--$(f_{4})$ and $(f_{6})$ hold. Let $u_{n}\rightharpoonup u$ in $H^{1}(\R^{2})$ with $u_n\ge 0$ a.e. in $\R^2$, and assume that
	\begin{equation}\label{eq4.1}
		\int_{\R^{2}}\left(\frac{1}{|x|^{\mu}}*F(u_{n})\right) f(u_{n})u_{n}\,dx \le K_{0}
	\end{equation}
	for some constant $K_{0}>0$ and all $n$. Then for every $\phi\in C_{0}^{\infty}(\R^{2})$ we have
	\begin{equation}\label{eq4.2}
		\lim_{n\to\infty}\int_{\R^{2}}\left(\frac{1}{|x|^{\mu}}*F(u_{n})\right) f(u_{n})\phi\,dx
		=\int_{\R^{2}}\left(\frac{1}{|x|^{\mu}}*F(u)\right) f(u)\phi\,dx.
	\end{equation}
\end{Lem}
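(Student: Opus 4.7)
The plan is to prove the convergence in three stages: first extract a.e.\ convergence of $u_n$, then establish strong local convergence of $F(u_n)$ and $f(u_n)$ in the critical Hardy--Littlewood--Sobolev exponent $p=4/(4-\mu)$, and finally pass to the limit in the convolution by splitting and exploiting the global bound \eqref{eq4.1} together with $(f_3)$ and $(f_6)$.

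Since $u_n\rightharpoonup u$ in $H^{1}(\R^{2})$ the sequence is bounded, so Rellich's compactness on each ball combined with a diagonal extraction produces, up to a subsequence, $u_n\to u$ a.e.\ in $\R^{2}$; continuity of $f,F$ then yields $f(u_n)\to f(u)$ and $F(u_n)\to F(u)$ a.e. Using the growth decomposition \eqref{eq2.2} (and its analogue for $f$ derived from $(f_1)$--$(f_2)$) combined with Lemma~\ref{Lem2.5} applied with an exponent slightly larger than $4/(4-\mu)$, one obtains uniform $L^{p'}_{\mathrm{loc}}$ bounds on $F(u_n)$ and $f(u_n)$ for some $p'>4/(4-\mu)$; Vitali's theorem then upgrades the a.e.\ convergence to strong $L^{4/(4-\mu)}_{\mathrm{loc}}$ convergence, and since $\phi$ has compact support $f(u_n)\phi\to f(u)\phi$ strongly in $L^{4/(4-\mu)}(\R^{2})$.

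Writing $K=|x|^{-\mu}$, decompose the difference as $A_n+B_n$, with $A_n=\int (K\ast F(u_n))[f(u_n)-f(u)]\phi\,dx$ and $B_n=\int (K\ast[F(u_n)-F(u)])f(u)\phi\,dx$. For $A_n$, Mattner's inequality (Lemma~\ref{Lem2.4}) combined with $(f_3)$ and \eqref{eq4.1} yields
\[
|A_n|^{2}\le\frac{K_0}{\theta}\int (K\ast[(f(u_n)-f(u))\phi])(f(u_n)-f(u))\phi\,dx,
\]
and the right-hand side is controlled via Lemma~\ref{Lem2.3} by $C\|[f(u_n)-f(u)]\phi\|_{4/(4-\mu)}^{2}$, which vanishes by the previous step. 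For $B_n$, Fubini rewrites it as $\int[F(u_n)-F(u)]\,h\,dy$ with $h:=K\ast(f(u)\phi)\in L^{4/\mu}(\R^{2})$; splitting the $y$-integration at scale $R$, the local part vanishes by the previous step, while the tail is estimated via Mattner's inequality in terms of $\int_{B_R^{c}}(K\ast F(u_n))F(u_n)\,dx$, a tail integral of a uniformly bounded quantity.

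The main obstacle is precisely this last tail: the critical exponential growth precludes any direct global $L^{p}$ bound on $F(u_n)$, so tightness at infinity of the Coulomb density $(K\ast F(u_n))F(u_n)$ does not follow from \eqref{eq4.1} alone. Here hypothesis $(f_6)$ is essential: on the set $\{u_n\ge t_0\}$ it gives $F(u_n)\le(M_0/t_0)u_n f(u_n)$, so the global bound \eqref{eq4.1} directly controls the large-value contribution; on $\{u_n<t_0\}$ the small-value decay from $(f_1)$ together with the $L^{2}$ bound on $u_n$ and the decay $|h(y)|\lesssim|y|^{-\mu}$ at infinity (inherited from the compact support of $f(u)\phi$) yield the complementary tightness through a Chebyshev-type argument. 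The delicate interplay between the pointwise comparison $(f_6)$, the superquadraticity $(f_3)$, and the global bound \eqref{eq4.1} is the technical crux of the proof.
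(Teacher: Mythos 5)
There is a genuine gap at the second stage of your argument, namely the claim that $f(u_n)\phi\to f(u)\phi$ strongly in $L^{4/(4-\mu)}(\R^{2})$ via Lemma~\ref{Lem2.5} and Vitali. Lemma~\ref{Lem2.5} is only applicable under the smallness condition $\frac{4}{4-\mu}kM^{2}<4\pi$, where $M$ bounds $\|u_n\|_{H^{1}(\R^{2})}$ and $k$ is the exponent in the growth decomposition. Because $f$ has \emph{critical} exponential growth, any valid decomposition of type \eqref{eq2.2} forces $k>4\pi$, so the condition would require $M^{2}<\frac{4-\mu}{4}$ -- a smallness of the $H^{1}$-norm that is nowhere assumed in Lemma~\ref{Lem4.1} (the sequence is only weakly convergent, hence bounded by an arbitrary constant). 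Without that smallness, there is no uniform Trudinger--Moser bound on $\int_{\Omega}|f(u_n)|^{p'}dx$ for $p'>\frac{4}{4-\mu}$ (Proposition~\ref{pro2.2} gives finiteness for each fixed $n$ but not uniformly in $n$), so the uniform integrability needed for Vitali is unavailable and the strong convergence you use to kill $A_n$ is unjustified. The same issue contaminates your claimed $L^{p'}_{\mathrm{loc}}$ bound on $F(u_n)$.

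The way around this -- and what the paper's proof does -- is to never invoke Trudinger--Moser on the large values of $u_n$ at all. One first removes the set $\{u_n\ge M_{\varepsilon}\}$ from the outer integral: there $|f(u_n)\phi|\le\frac{\|\phi\|_{\infty}}{M_{\varepsilon}}f(u_n)u_n$, so its contribution is at most $\frac{\|\phi\|_\infty}{M_\varepsilon}K_0\le\varepsilon/2$ directly from \eqref{eq4.1}. On the complementary set, $g_n=f(u_n)\phi\,\chi_{\{u_n\le M_{\varepsilon}\}}$ is uniformly bounded by $\|\phi\|_\infty\max_{[0,M_\varepsilon]}|f|$ on the compact support of $\phi$, so plain dominated convergence gives $g_n\to g$ in $L^{4/(4-\mu)}$ with no exponential estimates. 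Similarly, the large values of $u_n$ \emph{inside} the convolution are removed via $(f_6)$, which converts $F(u_n)\chi_{\{u_n\ge K_{\varepsilon}\}}$ into $\frac{M_0}{K_{\varepsilon}}f(u_n)u_n$ and hence into a small multiple of the bound \eqref{eq4.1}, while the truncated part $F(u_n)\chi_{\{u_n\le K_{\varepsilon}\}}$ is controlled in $L^{4/(4-\mu)}$ by $(f_1)$ and the $L^{2}$ bound alone. Your instinct about the roles of $(f_3)$, $(f_6)$ and \eqref{eq4.1} in handling the spatial tail of $B_n$ is sound and close to the paper's mechanism, but the decomposition must be organized around truncation in the \emph{values} of $u_n$ (both in the outer factor and inside the convolution), not around a strong convergence of the full nonlinear terms, which is out of reach in the critical regime.
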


\begin{proof}
	Since $u_n\rightharpoonup u$ in $H^{1}(\R^{2})$, up to a subsequence $u_{n}\to u$ a.e. in $\R^{2}$ and $u\ge 0$ a.e. By $(f_{3})$ we have $F(t)\ge 0$ and $f(t)t\ge 0$ for $t\ge 0$. Writing
	\[
	\int_{\R^{2}}\left(\frac{1}{|x|^{\mu}}*F(u_{n})\right) f(u_{n})u_{n}\,dx
	=\int_{\R^{2}}\int_{\R^{2}}\frac{F(u_n(y))\,f(u_n(x))u_n(x)}{|x-y|^\mu}\,dy\,dx,
	\]
	Fatou's lemma on $\R^{2}\times\R^{2}$ and \eqref{eq4.1} yield
	\begin{equation}\label{eq4.3}
		\int_{\R^{2}}\left(\frac{1}{|x|^{\mu}}*F(u)\right) f(u)u\,dx \le K_{0}.
	\end{equation}
	
	Let $\Omega=\operatorname{supp}\phi$ and fix $\varepsilon>0$. Set
	\[
	M_{\varepsilon}
	=\frac{2K_{0}\|\phi\|_{\infty}}{\varepsilon}.
	\]
	Then, for every $n$,
	\begin{equation}\label{eq4.4}
		\begin{aligned}
			\int_{\{u_{n}\ge M_{\varepsilon}\}}
			\left(\frac{1}{|x|^{\mu}}*F(u_{n})\right)
			\bigl|f(u_{n})\phi\bigr|\,dx
			&\le \frac{\|\phi\|_{\infty}}{M_{\varepsilon}}
			\int_{\R^{2}}\left(\frac{1}{|x|^{\mu}}*F(u_{n})\right)
			f(u_{n})u_{n}\,dx\\
			&\le \frac{\varepsilon}{2K_{0}}
			\int_{\R^{2}}\left(\frac{1}{|x|^{\mu}}*F(u_{n})\right)
			f(u_{n})u_{n}\,dx\\
			&\le \frac{\varepsilon}{2},
		\end{aligned}
	\end{equation}
	and similarly, using \eqref{eq4.3},
	\begin{equation}\label{eq4.5}
		\int_{\{u\ge M_{\varepsilon}\}}
		\left(\frac{1}{|x|^{\mu}}*F(u)\right)|f(u)\phi|\,dx
		\le \frac{\varepsilon}{2}.
	\end{equation}
	Hence the contribution to \eqref{eq4.2} coming from the sets
	\[
	\{u_{n}\ge M_{\varepsilon}\}\cup\{u\ge M_{\varepsilon}\}
	\]
	is bounded by $\varepsilon$ for all $n$.
	
	Define
	\[
	g_{n}=f(u_{n})\,\phi\,\chi_{\{u_{n}\le M_{\varepsilon}\}},
	\qquad
	g=f(u)\,\phi\,\chi_{\{u\le M_{\varepsilon}\}}.
	\]
	By continuity of $f$ and $u_n\to u$ a.e., we have $g_n\to g$ a.e. in $\Omega$ and
	\[
	|g_n|\le \|\phi\|_\infty \max_{0\le t\le M_\varepsilon} |f(t)| \,\chi_\Omega,
	\]
	so
	\begin{equation}\label{eq4.6}
		g_n\to g \quad\text{in }L^{\frac{4}{4-\mu}}(\R^{2}).
	\end{equation}
	
	We next control the contribution of large values inside the convolution. By $(f_{6})$ there exist $M_{0}>0$ and $t_{0}>0$ such that
	\[
	F(t)\le M_{0} f(t)
	\quad\text{for all }t\ge t_{0}.
	\]
	Choose $K_{\varepsilon}>\max\{t_{0},M_{\varepsilon}\}$ so large that
	\begin{equation}\label{eq4.7}
		C_{\mu}^{\frac12}\,\|\phi\|_{\infty}\,|\Omega|^{\frac{4-\mu}{4}}
		\Bigl(\max_{0\le t\le M_{\varepsilon}}|f(t)|\Bigr)
		\left(\frac{M_{0}K_{0}}{K_{\varepsilon}}\right)^{\frac{1}{2}}
		<\varepsilon,
	\end{equation}
	where $C_{\mu}$ is the constant in Lemma~\ref{Lem2.3}.
	
	Set
	\[
	F_{n}^{\mathrm{tail}}=F(u_{n})\,\chi_{\{u_{n}\ge K_{\varepsilon}\}}.
	\]
	Using Lemma~\ref{Lem2.4} with $f=F_{n}^{\mathrm{tail}}$ and $h=|g_n|$, and then Lemma~\ref{Lem2.3}, we obtain
	\begin{equation}\label{eq4.8}
		\begin{aligned}
			\int_{\R^{2}}
			\left(\frac{1}{|x|^{\mu}}*F_{n}^{\mathrm{tail}}\right)
			|g_{n}|\,dx
			&\le
			\left(\int_{\R^{2}}\left(\frac{1}{|x|^{\mu}}*F_{n}^{\mathrm{tail}}\right)
			F_{n}^{\mathrm{tail}}\,dx\right)^{\frac{1}{2}}
			\left(\int_{\R^{2}}\left(\frac{1}{|x|^{\mu}}*|g_n|\right)|g_n|\,dx\right)^{\frac{1}{2}}\\
			&\le
			\left(\int_{\R^{2}}\left(\frac{1}{|x|^{\mu}}*F_{n}^{\mathrm{tail}}\right)
			F_{n}^{\mathrm{tail}}\,dx\right)^{\frac{1}{2}}
			C_{\mu}^{\frac12}\left(\int_{\R^{2}}|g_n|^{\frac{4}{4-\mu}}\,dx\right)^{\frac{4-\mu}{4}}.
		\end{aligned}
	\end{equation}
	Moreover, since $F\ge 0$ and the kernel is positive,
	\[
	\left(\frac{1}{|x|^\mu}*F_n^{\mathrm{tail}}\right)\le \left(\frac{1}{|x|^\mu}*F(u_n)\right),
	\]
	hence
	\[
	\int_{\R^{2}}\left(\frac{1}{|x|^{\mu}}*F_{n}^{\mathrm{tail}}\right)F_{n}^{\mathrm{tail}}\,dx
	\le \int_{\{u_n\ge K_\varepsilon\}}\left(\frac{1}{|x|^{\mu}}*F(u_{n})\right)F(u_{n})\,dx.
	\]
	On $\{u_n\ge K_\varepsilon\}$, using $(f_6)$ and $u_n\ge K_\varepsilon$ we have
	\[
	F(u_n)\le M_0 f(u_n)\le \frac{M_0}{K_\varepsilon} f(u_n)u_n,
	\]
	therefore, by \eqref{eq4.1},
	\[
	\int_{\R^{2}}\left(\frac{1}{|x|^{\mu}}*F_{n}^{\mathrm{tail}}\right)F_{n}^{\mathrm{tail}}\,dx
	\le \frac{M_{0}}{K_{\varepsilon}}
	\int_{\R^{2}}
	\left(\frac{1}{|x|^{\mu}}*F(u_{n})\right)f(u_{n})u_{n}\,dx
	\le \frac{M_{0}K_{0}}{K_{\varepsilon}}.
	\]
	Combining this with \eqref{eq4.8} and \eqref{eq4.7}, and using the bound
	\[
	\left(\int_{\R^2}|g_n|^{\frac{4}{4-\mu}}\,dx\right)^{\frac{4-\mu}{4}}
	\le \|\phi\|_{\infty}\,|\Omega|^{\frac{4-\mu}{4}}
	\Bigl(\max_{0\le t\le M_{\varepsilon}}|f(t)|\Bigr),
	\]
	we obtain, for all $n$,
	\begin{equation}\label{eq4.9}
		\left|
		\int_{\R^{2}}
		\left(\frac{1}{|x|^{\mu}}*F(u_n)\right) g_n\,dx
		-\int_{\R^{2}}
		\left(\frac{1}{|x|^{\mu}}*\bigl(F(u_n)\chi_{\{u_n\le K_{\varepsilon}\}}\bigr)\right) g_n\,dx
		\right|
		\le \varepsilon.
	\end{equation}
	Similarly, setting $F^{\mathrm{tail}}=F(u)\chi_{\{u\ge K_\varepsilon\}}$ and using \eqref{eq4.3} in the same argument, we also have
	\begin{equation}\label{eq4.10}
		\left|
		\int_{\R^{2}}
		\left(\frac{1}{|x|^{\mu}}*F(u)\right) g\,dx
		-\int_{\R^{2}}
		\left(\frac{1}{|x|^{\mu}}*\bigl(F(u)\chi_{\{u\le K_{\varepsilon}\}}\bigr)\right) g\,dx
		\right|
		\le \varepsilon.
	\end{equation}
	
	Now set
	\[
	F_{n}^{\mathrm{tr}}=F(u_{n})\,\chi_{\{u_{n}\le K_{\varepsilon}\}},
	\qquad
	F^{\mathrm{tr}}=F(u)\,\chi_{\{u\le K_{\varepsilon}\}}.
	\]
	For $0\le t\le K_\varepsilon$ define $H(t)=F(t)\,t^{-\frac{4-\mu}{2}}$ for $t>0$ and $H(0)=0$. By $(f_1)$ one has $H(t)\to 0$ as $t\to 0^{+}$, hence $H$ is bounded on $[0,K_\varepsilon]$ and there exists $C_\varepsilon>0$ such that
	\begin{equation}\label{eq4.11}
		F(t)\le C_\varepsilon\,t^{\frac{4-\mu}{2}}
		\quad\text{for all }t\in[0,K_\varepsilon].
	\end{equation}
	In particular,
	\[
	\int_{\R^2}\bigl|F_n^{\mathrm{tr}}\bigr|^{\frac{4}{4-\mu}}\,dx
	\le C_\varepsilon \int_{\R^2} |u_n|^{2}\,dx,
	\]
	so $\{F_n^{\mathrm{tr}}\}$ is bounded in $L^{\frac{4}{4-\mu}}(\R^2)$.
	
	Define
	\[
	\zeta_{n}(x)=\left(\frac{1}{|x|^{\mu}}*F_{n}^{\mathrm{tr}}\right)(x),
	\qquad
	\zeta(x)=\left(\frac{1}{|x|^{\mu}}*F^{\mathrm{tr}}\right)(x).
	\]
	Fix $x\in\Omega$ and $R>1$. Since $\mu<2$ and $|F_n^{\mathrm{tr}}|\le \max_{0\le t\le K_\varepsilon} |F(t)|$, the function $|x-y|^{-\mu}$ is integrable on $B_R(x)$ and dominated convergence yields
	\[
	\int_{B_R(x)}\frac{|F_n^{\mathrm{tr}}(y)-F^{\mathrm{tr}}(y)|}{|x-y|^\mu}\,dy \to 0
	\quad\text{as }n\to\infty.
	\]
	For the complement $\R^2\setminus B_R(x)$, let $p=\frac{4}{4-\mu}$ and $p'=\frac{4}{\mu}$. By H\"older's inequality and the boundedness of $\{F_n^{\mathrm{tr}}\}$ in $L^p(\R^2)$,
	\[
	\int_{\R^2\setminus B_R(x)}\frac{|F_n^{\mathrm{tr}}(y)|}{|x-y|^\mu}\,dy
	\le \|F_n^{\mathrm{tr}}\|_{p}
	\left(\int_{|x-y|>R}\frac{1}{|x-y|^{\mu p'}}\,dy\right)^{\frac{1}{p'}}
	\le C\,R^{-\frac{\mu}{2}},
	\]
	and the same estimate holds with $F_n^{\mathrm{tr}}$ replaced by $F^{\mathrm{tr}}$. Letting first $n\to\infty$ and then $R\to\infty$, we obtain
	\[
	\zeta_n(x)\to \zeta(x)\quad\text{for every }x\in\Omega.
	\]
	Moreover, taking $R=1$ in the above decomposition yields $|\zeta_n(x)|\le C$ for all $x\in\Omega$ and all $n$, with $C$ independent of $n$.
	
	Since $u_n\to u$ a.e. in $\Omega$, we have $g_n(x)\to g(x)$ for a.e. $x\in\Omega$ and $|g_n|\le C\,\chi_\Omega$. Therefore, by dominated convergence,
	\begin{equation}\label{eq4.12}
		\int_{\R^{2}} \zeta_n(x)\,g_n(x)\,dx
		\longrightarrow
		\int_{\R^{2}} \zeta(x)\,g(x)\,dx.
	\end{equation}
	
	Finally, we split
	\[
	\int_{\R^{2}}\left(\frac{1}{|x|^{\mu}}*F(u_{n})\right) f(u_{n})\phi\,dx
	=\int_{\{u_n\ge M_\varepsilon\}}\left(\frac{1}{|x|^{\mu}}*F(u_{n})\right) f(u_{n})\phi\,dx
	+\int_{\R^{2}}\left(\frac{1}{|x|^{\mu}}*F(u_{n})\right) g_n\,dx,
	\]
	and similarly for $u$. Using \eqref{eq4.4}, \eqref{eq4.5}, \eqref{eq4.9}, \eqref{eq4.10} and \eqref{eq4.12}, and recalling that $\varepsilon>0$ was arbitrary, we conclude that
	\[
	\int_{\R^{2}}\left(\frac{1}{|x|^{\mu}}*F(u_{n})\right) f(u_{n})\phi\,dx
	\longrightarrow
	\int_{\R^{2}}\left(\frac{1}{|x|^{\mu}}*F(u)\right) f(u)\phi\,dx,
	\]
	which is \eqref{eq4.2}.
\end{proof}

\begin{Lem}\label{Lem4.2}
	Assume that $(f_{1})$--$(f_{4})$ and $(f_{6})$ hold. Let $\{u_{n}\}$ be a $(PS)_{c_{a}}$ sequence for $\mathcal{I}_{a}$ with
	\[
	c_{a}<\frac{4-\mu}{8}\bigl(1+C_s\bigr).
	\]
	Then the following conclusions hold:
	\begin{itemize}
		\item[(i)] $\{u_{n}\}$ is bounded in $W_{a}$, and up to a subsequence $u_{n}\rightharpoonup u$ for some $u\in W_{a}$;
		\item[(ii)] $u\ge0$ in $\R^{2}$;
		\item[(iii)] $\mathcal{I}_{a}'(u)=0$.
	\end{itemize}
\end{Lem}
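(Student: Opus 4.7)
The three conclusions follow the standard mountain-pass template, adapted to the mixed-order norm $\|\cdot\|_{a}$ and to the Choquard--Trudinger--Moser setting.

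For (i), I apply the classical Ambrosetti--Rabinowitz combination. Setting $\Psi(u)=\int_{\R^{2}}(|x|^{-\mu}*F(u))F(u)\,dx$, I compute
\[
\mathcal{I}_{a}(u_{n})-\frac{1}{2\theta}\bigl\langle\mathcal{I}_{a}'(u_{n}),u_{n}\bigr\rangle
=\frac{1}{2}\Bigl(1-\frac{1}{\theta}\Bigr)\|u_{n}\|_{a}^{2}
-\frac{1}{2}\Psi(u_{n})
+\frac{1}{2\theta}\int_{\R^{2}}\Bigl(\frac{1}{|x|^{\mu}}*F(u_{n})\Bigr)f(u_{n})u_{n}\,dx.
\]
Since $f\equiv 0$ on $(-\infty,0]$ makes $F$ and $f(\cdot)(\cdot)$ nonnegative, assumption $(f_{3})$ gives $\int_{\R^{2}}(|x|^{-\mu}*F(u_{n}))f(u_{n})u_{n}\,dx\ge\theta\Psi(u_{n})$, so the last two terms cancel and only $\tfrac{1}{2}(1-1/\theta)\|u_{n}\|_{a}^{2}$ remains on the right. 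Bounding the left side by $c_{a}+1+o(1)\|u_{n}\|_{a}$ and using $\theta>1$ yields a uniform bound on $\|u_{n}\|_{a}$. Passing to a subsequence, $u_{n}\rightharpoonup u$ in $W_{a}$ and $u_{n}\to u$ a.e. in $\R^{2}$.

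For (ii), I test $\mathcal{I}_{a}'(u_{n})$ against $u_{n}^{-}\in W_{a}$. Since $f\equiv 0$ on $(-\infty,0]$ forces $f(u_{n})u_{n}^{-}\equiv 0$, the Choquard contribution drops out. The local gradient and potential parts produce $-\|\nabla u_{n}^{-}\|_{2}^{2}$ and $-\int_{\R^{2}} a(u_{n}^{-})^{2}\,dx$ from the disjoint support of $\nabla u_{n}^{+}$ and $\nabla u_{n}^{-}$. For the fractional part I invoke the pointwise inequality
\[
\bigl(u_{n}(x)-u_{n}(y)\bigr)\bigl(u_{n}^{-}(x)-u_{n}^{-}(y)\bigr)\le -\bigl(u_{n}^{-}(x)-u_{n}^{-}(y)\bigr)^{2},
\]
so that after integration against $\tfrac{1}{2}|x-y|^{-(2+2s)}$ on $\R^{2}\times\R^{2}$ the fractional contribution to $\langle\mathcal{I}_{a}'(u_{n}),u_{n}^{-}\rangle$ is bounded above by the negative of the fractional part of $\|u_{n}^{-}\|_{a}^{2}$. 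Summing, $\langle\mathcal{I}_{a}'(u_{n}),u_{n}^{-}\rangle\le-\|u_{n}^{-}\|_{a}^{2}$, while the left side is $o(1)\|u_{n}^{-}\|_{a}$; hence $\|u_{n}^{-}\|_{a}\to 0$, and combined with $u_{n}\to u$ a.e. this forces $u^{-}=0$.

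For (iii), fix $\phi\in C_{c}^{\infty}(\R^{2})$ and pass to the limit in $\langle\mathcal{I}_{a}'(u_{n}),\phi\rangle\to 0$. The three quadratic contributions converge to their counterparts in $\langle\mathcal{I}_{a}'(u),\phi\rangle$ by the weak $H^{1}$ and $H^{s}$ convergence of $u_{n}$. To handle the Choquard term I apply Lemma~\ref{Lem4.1} to $v_{n}:=u_{n}^{+}$: by (i)--(ii) one has $v_{n}\ge 0$ a.e. and $v_{n}\rightharpoonup u^{+}=u$ in $H^{1}(\R^{2})$, while $F(t)=f(t)=0$ for $t\le 0$ gives the pointwise identities $F(u_{n})=F(v_{n})$, $f(u_{n})u_{n}=f(v_{n})v_{n}$, $f(u_{n})\phi=f(v_{n})\phi$. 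The hypothesis \eqref{eq4.1} of Lemma~\ref{Lem4.1} follows from
\[
\int_{\R^{2}}\Bigl(\frac{1}{|x|^{\mu}}*F(v_{n})\Bigr)f(v_{n})v_{n}\,dx
=\|u_{n}\|_{a}^{2}+o(1),
\]
which is bounded by (i) together with $\langle\mathcal{I}_{a}'(u_{n}),u_{n}\rangle\to 0$. Lemma~\ref{Lem4.1} then delivers the convergence of the Choquard term against $\phi$, and by density I conclude $\mathcal{I}_{a}'(u)=0$. The main obstacle is exactly this Choquard limit in the critical exponential regime, and it has already been discharged by Lemma~\ref{Lem4.1}; here the task reduces to verifying its hypotheses together with the sign-structure argument in (ii).
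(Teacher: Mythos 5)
Your proposal is correct and follows essentially the same route as the paper's proof: the $\mathcal{I}_{a}(u_{n})-\frac{1}{2\theta}\langle\mathcal{I}_{a}'(u_{n}),u_{n}\rangle$ estimate with $(f_{3})$ for boundedness, the test function $u_{n}^{-}$ with the pointwise inequality $(r-s)(r^{-}-s^{-})\le-(r^{-}-s^{-})^{2}$ for nonnegativity, and the reduction to $u_{n}^{+}$ so that Lemma~\ref{Lem4.1} (whose hypothesis \eqref{eq4.1} follows from $\langle\mathcal{I}_{a}'(u_{n}),u_{n}\rangle=o(1)$ and boundedness) handles the Choquard term. The only cosmetic difference is that you say the last two terms ``cancel'' where they are in fact merely nonnegative and droppable, which is exactly the inequality you state just before.
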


\begin{proof}
	We use the convention that $f(t)=0$ for $t\le0$, hence $F(t)=0$ for $t\le0$.
	Since $\{u_{n}\}$ is a $(PS)_{c_{a}}$ sequence, we have
	\[
	\mathcal{I}_{a}(u_{n})\to c_{a}
	\quad\text{and}\quad
	\|\mathcal{I}_{a}'(u_{n})\|_{W_{a}^{*}}\to 0.
	\]
	By the definition of $\mathcal{I}_{a}$,
	\[
	\begin{aligned}
		\mathcal{I}_{a}(u_{n})
		&=\frac{1}{2}\|u_{n}\|_{a}^{2}
		-\frac{1}{2}\int_{\R^{2}}
		\left(\frac{1}{|x|^{\mu}}*F(u_{n})\right)F(u_{n})\,dx,\\
		\bigl\langle \mathcal{I}_{a}'(u_{n}),u_{n}\bigr\rangle
		&=\|u_{n}\|_{a}^{2}
		-\int_{\R^{2}}
		\left(\frac{1}{|x|^{\mu}}*F(u_{n})\right)f(u_{n})u_{n}\,dx.
	\end{aligned}
	\]
	
	\noindent (i) By $(f_{3})$, for $t\ge0$ one has $f(t)t\ge\theta F(t)$ with $\theta>1$, hence
	\[
	\frac{1}{2\theta}f(t)t-\frac{1}{2}F(t)\ge 0
	\quad\text{for all }t\in\R.
	\]
	Using this and the nonnegativity of the kernel, we compute
	\[
	\begin{aligned}
		\mathcal{I}_{a}(u_{n})
		-\frac{1}{2\theta}\bigl\langle \mathcal{I}_{a}'(u_{n}),u_{n}\bigr\rangle
		&=\left(\frac{1}{2}-\frac{1}{2\theta}\right)\|u_{n}\|_{a}^{2}
		+\int_{\R^{2}}
		\left(\frac{1}{|x|^{\mu}}*F(u_{n})\right)
		\left(\frac{1}{2\theta}f(u_{n})u_{n}
		-\frac{1}{2}F(u_{n})\right)\,dx\\
		&\ge \left(\frac{1}{2}-\frac{1}{2\theta}\right)\|u_{n}\|_{a}^{2}.
	\end{aligned}
	\]
	Hence
	\[
	c_a+o_n(1)
	\ge \left(\frac{1}{2}-\frac{1}{2\theta}\right)\|u_{n}\|_{a}^{2}.
	\]
	 this implies that $\{\|u_n\|_a\}$ is bounded. Therefore, up to a subsequence,
	\[
	u_{n}\rightharpoonup u \text{ in }W_{a},\quad
	u_{n}\to u \text{ in }L^{p}_{\mathrm{loc}}(\R^{2})\text{ for all }p\in[1,\infty),\quad
	u_{n}\to u \text{ a.e. in }\R^{2}.
	\]
	
	\noindent (ii) Let $u_{n}^{-}=\max\{-u_{n},0\}$ and $u_{n}^{+}=\max\{u_{n},0\}$. By the convention $f(t)=0$ for $t\le0$ we have
	\[
	f(u_{n})u_{n}^{-}=0 \quad\text{a.e. in }\R^{2}.
	\]
	Taking $\varphi=u_{n}^{-}$ in $\langle \mathcal{I}_{a}'(u_{n}),\varphi\rangle$ yields
	\[
	\begin{aligned}
		\bigl\langle \mathcal{I}_{a}'(u_{n}),u_{n}^{-}\bigr\rangle
		&=\int_{\R^{2}}\nabla u_{n}\cdot\nabla u_{n}^{-}\,dx
		+\frac{1}{2}\int_{\R^{2}}\int_{\R^{2}}
		\frac{(u_{n}(x)-u_{n}(y))(u_{n}^{-}(x)-u_{n}^{-}(y))}{|x-y|^{2+2s}}\,dx\,dy
		+\int_{\R^{2}}a\,u_{n}u_{n}^{-}\,dx.
	\end{aligned}
	\]
	Let  $r^{-}=\max\{-r,0\}$,We use the pointwise inequality
	\[
	(r-s)(r^{-}-s^{-})\le -(r^{-}-s^{-})^{2}
	\quad\text{for all }r,s\in\R,
	\]
	which gives
	\[
	\frac{1}{2}\int_{\R^{2}}\int_{\R^{2}}
	\frac{(u_{n}(x)-u_{n}(y))(u_{n}^{-}(x)-u_{n}^{-}(y))}{|x-y|^{2+2s}}\,dx\,dy
	\le -\frac{1}{2}\int_{\R^{2}}\int_{\R^{2}}
	\frac{|u_{n}^{-}(x)-u_{n}^{-}(y)|^{2}}{|x-y|^{2+2s}}\,dx\,dy.
	\]
	Moreover,
	\[
	\int_{\R^{2}}\nabla u_{n}\cdot\nabla u_{n}^{-}\,dx
	=-\int_{\R^{2}}|\nabla u_{n}^{-}|^{2}\,dx,
	\qquad
	\int_{\R^{2}}a\,u_{n}u_{n}^{-}\,dx
	=-\int_{\R^{2}}a\,(u_{n}^{-})^{2}\,dx.
	\]
	Therefore
	\[
	\bigl\langle \mathcal{I}_{a}'(u_{n}),u_{n}^{-}\bigr\rangle
	\le -\|u_{n}^{-}\|_{a}^{2}.
	\]
	Since $\|\mathcal{I}_{a}'(u_{n})\|_{W_a^*}\to0$,
	\[
	\left|\bigl\langle \mathcal{I}_{a}'(u_{n}),u_{n}^{-}\bigr\rangle\right|
	\le \|\mathcal{I}_{a}'(u_{n})\|_{W_a^*}\,\|u_{n}^{-}\|_{a},
	\]
	hence $\|u_{n}^{-}\|_{a}\to0$. In particular, $u_{n}^{+}=u_{n}+u_{n}^{-}\rightharpoonup u$ in $W_a$, and $u\ge0$ a.e. in $\R^{2}$.
	
	\noindent (iii) We prove that $u$ is a critical point of $\mathcal{I}_{a}$. Since $F(t)=0$ and $f(t)=0$ for $t\le0$, we have
	\[
		F(u_n)
		=
		\begin{cases}
			F(u_n^+), & u_n >0,\\[4pt]
			0, & u_n\leq0;
		\end{cases}
	\qquad 
		f(u_n)
		=
		\begin{cases}
			f(u_n^+), & u_n >0,\\[4pt]
			0, & u_n\leq0;
		\end{cases} 
	\]
   
	so the nonlinear terms in $\mathcal{I}_a$ and $\mathcal{I}_a'$ are unchanged by replacing $u_n$ with $u_n^+$. Moreover,
	\[
	\|u_n-u_n^+\|_a=\|u_n^-\|_a\to0,
	\]
	which yields
	\[
	\mathcal{I}_{a}(u_{n}^{+})-\mathcal{I}_{a}(u_{n})\to0,
	\qquad
	\|\mathcal{I}_{a}'(u_{n}^{+})-\mathcal{I}_{a}'(u_{n})\|_{W_a^*}\to0.
	\]
	Thus $\{u_n^+\}$ is still a $(PS)_{c_a}$ sequence. Replacing $u_n$ by $u_n^+$, we may assume $u_n\ge0$ for all $n$.
	
	For every $\varphi\in C_{c}^{\infty}(\R^{2})$ we have
	\[
	\begin{aligned}
		\bigl\langle \mathcal{I}_{a}'(u_{n}),\varphi\bigr\rangle
		&=\int_{\R^{2}}\nabla u_{n}\cdot\nabla\varphi\,dx
		+\frac{1}{2}\int_{\R^{2}}\int_{\R^{2}}
		\frac{(u_{n}(x)-u_{n}(y))(\varphi(x)-\varphi(y))}{|x-y|^{2+2s}}\,dx\,dy
		+\int_{\R^{2}}a\,u_{n}\varphi\,dx\\
		&\quad-\int_{\R^{2}}
		\left(\frac{1}{|x|^{\mu}}*F(u_{n})\right)f(u_{n})\varphi\,dx
		\longrightarrow 0.
	\end{aligned}
	\]
	The first three terms converge by weak convergence in $W_a$. It remains to show that
	\begin{equation}\label{eq4.13}
		\int_{\R^{2}}\left(\frac{1}{|x|^{\mu}}*F(u_{n})\right)f(u_{n})\varphi\,dx
		\longrightarrow
		\int_{\R^{2}}\left(\frac{1}{|x|^{\mu}}*F(u)\right)f(u)\varphi\,dx
	\end{equation}
	for all $\varphi\in C_{c}^{\infty}(\R^{2})$.
	
	Since $\|\mathcal{I}_{a}'(u_{n})\|_{W_a^*}\to0$, we have
	\[
	\left|\bigl\langle \mathcal{I}_{a}'(u_{n}),u_{n}\bigr\rangle\right|
	\le \|\mathcal{I}_{a}'(u_{n})\|_{W_a^*}\,\|u_n\|_a,
	\]
	so
	\[
	\int_{\R^{2}}
	\left(\frac{1}{|x|^{\mu}}*F(u_{n})\right)f(u_{n})u_{n}\,dx
	=\|u_{n}\|_{a}^{2}-\bigl\langle \mathcal{I}_{a}'(u_{n}),u_{n}\bigr\rangle.
	\]
	Using the boundedness of $\|u_{n}\|_{a}$, there exists $C>0$ such that
	\begin{equation}\label{eq4.14}
		\int_{\R^{2}}
		\left(\frac{1}{|x|^{\mu}}*F(u_{n})\right)f(u_{n})u_{n}\,dx
		\le C
	\end{equation}
	for all $n$. Thus $\{u_{n}\}$ satisfies the assumptions of Lemma~\ref{Lem4.1}, and \eqref{eq4.13} follows.
	Passing to the limit in $\langle \mathcal{I}_{a}'(u_{n}),\varphi\rangle$ we obtain $\langle \mathcal{I}_{a}'(u),\varphi\rangle=0$ for all $\varphi\in C_c^\infty(\R^2)$, and by density of $C_c^\infty(\R^2)$ in $W_a$ we conclude $\mathcal{I}_{a}'(u)=0$ in $W_a^*$.
\end{proof}

Now we prove the existence result for the autonomous problem \eqref{eq3.1}.

\begin{Thm}\label{Thm4.3}
	Assume that $(f_{1})$--$(f_{6})$ hold. Then for any $a>0$, problem \eqref{eq3.1} admits a positive ground state solution.
\end{Thm}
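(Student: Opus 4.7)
The plan is to produce a nontrivial critical point of $\mathcal{I}_{a}$ sitting exactly at the level $c_{a}$ by combining the compactness information of Lemma~\ref{Lem4.2} with a translation argument that exploits the autonomy of \eqref{eq3.1}. First I would adapt Lemma~\ref{Lem2.6} and Lemma~\ref{Lem2.8} to the translation-invariant functional $\mathcal{I}_{a}$ (the proofs are identical modulo replacing $V(\varepsilon x)$ by the positive constant $a$) and apply the mountain pass theorem to obtain a $(PS)_{c_{a}}$ sequence $\{u_{n}\}\subset W_{a}$. By Lemma~\ref{Lem3.1}, $c_{a}<\frac{4-\mu}{8}(1+C_{s})$, and Lemma~\ref{Lem4.2} already gives boundedness of $\{u_{n}\}$, a weak limit $u\ge 0$, and $\mathcal{I}_{a}'(u)=0$. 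The remaining obstacle, which I expect to be the main one, is to guarantee that after a suitable translation this weak limit is nontrivial.

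To handle this I would invoke Lions' concentration-compactness lemma applied to $\{u_{n}\}\subset H^{1}(\R^{2})$. If vanishing occurs, i.e.\ $\sup_{y\in\R^{2}}\int_{B_{1}(y)}|u_{n}|^{2}\,dx\to 0$, then $u_{n}\to 0$ in $L^{q}(\R^{2})$ for every $q>2$. Splitting the nonlinearity via \eqref{eq2.2}, the power contribution to $\|F(u_{n})\|_{4/(4-\mu)}^{4/(4-\mu)}$ and $\|f(u_{n})u_{n}\|_{4/(4-\mu)}^{4/(4-\mu)}$ is of the form $C\eta^{4/(4-\mu)}\|u_{n}\|_{2}^{2}$, and hence negligible by the arbitrariness of $\eta$, while the exponential contribution vanishes by H\"older's inequality combining a high-integrability control from Lemma~\ref{Lem2.5} (whose Trudinger--Moser hypothesis $\tfrac{4}{4-\mu}k\|u_{n}\|_{H^{1}}^{2}<4\pi$ is ensured by the smallness of $c_{a}$) with $L^{q}$ vanishing for $q>2$. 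Lemma~\ref{Lem2.3} then yields
\[
\int_{\R^{2}}\left(\frac{1}{|x|^{\mu}}*F(u_{n})\right)F(u_{n})\,dx\to 0,\qquad
\int_{\R^{2}}\left(\frac{1}{|x|^{\mu}}*F(u_{n})\right)f(u_{n})u_{n}\,dx\to 0,
\]
so $\langle\mathcal{I}_{a}'(u_{n}),u_{n}\rangle\to 0$ forces $\|u_{n}\|_{a}\to 0$, contradicting $c_{a}>0$. Hence non-vanishing must occur: there exist $\delta>0$ and $\{y_{n}\}\subset\R^{2}$ with $\int_{B_{1}(y_{n})}|u_{n}|^{2}\,dx\ge\delta$. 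Setting $v_{n}(x)=u_{n}(x+y_{n})$, the translation invariance of $\mathcal{I}_{a}$ and of the Riesz kernel implies that $\{v_{n}\}$ is again a $(PS)_{c_{a}}$ sequence; Lemma~\ref{Lem4.2} produces a weak limit $v\ge 0$ with $\mathcal{I}_{a}'(v)=0$, and local $L^{2}$ compactness together with the lower bound on $\int_{B_{1}}|v_{n}|^{2}$ forces $v\not\equiv 0$.

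With a nontrivial critical point $v$ at hand, $v\in\mathcal{N}_{a}$ gives $\mathcal{I}_{a}(v)\ge c_{a}$, while the opposite inequality follows from the identity
\[
\mathcal{I}_{a}(v_{n})-\frac{1}{2\theta}\bigl\langle\mathcal{I}_{a}'(v_{n}),v_{n}\bigr\rangle
=\left(\frac{1}{2}-\frac{1}{2\theta}\right)\|v_{n}\|_{a}^{2}
+\int_{\R^{2}}\left(\frac{1}{|x|^{\mu}}*F(v_{n})\right)\left(\frac{1}{2\theta}f(v_{n})v_{n}-\frac{1}{2}F(v_{n})\right)dx,
\]
whose summands are nonnegative by $(f_{3})$: weak lower semicontinuity of $\|\cdot\|_{a}^{2}$ and Fatou's lemma on $\R^{2}\times\R^{2}$, together with the Nehari identity for $v$, yield $\mathcal{I}_{a}(v)\le c_{a}$. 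Hence $v$ attains $c_{a}$ and is a ground state of \eqref{eq3.1}. Positivity is then obtained from the interior regularity and strong maximum principle for the mixed operator $-\Delta+(-\Delta)^{s}+a$ developed in \cite{2022BiagiPDE,2022LaMaoAM}: the function $v\ge 0$, $v\not\equiv 0$, satisfies $-\Delta v+(-\Delta)^{s}v+av=\left(\frac{1}{|x|^{\mu}}*F(v)\right)f(v)\ge 0$ in $\R^{2}$, so $v>0$ in $\R^{2}$.
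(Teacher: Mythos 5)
Your overall architecture coincides with the paper's: mountain pass geometry for $\mathcal{I}_{a}$, exclusion of vanishing, translation to recover a nontrivial weak limit, the Fatou/$(f_{3})$ argument to show the limit attains $c_{a}$, and the strong maximum principle for positivity. The translation step, the two-sided comparison $\mathcal{I}_{a}(v)=c_{a}$, and the positivity argument are all fine.

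The gap is in your exclusion of vanishing, specifically in the parenthetical claim that the Trudinger--Moser hypothesis $\tfrac{4}{4-\mu}k\|u_{n}\|_{H^{1}}^{2}<4\pi$ of Lemma~\ref{Lem2.5} is ``ensured by the smallness of $c_{a}$.'' It is not. At that stage the only norm bound available on the $(PS)_{c_{a}}$ sequence is the one from Lemma~\ref{Lem4.2}(i), namely $\bigl(\tfrac12-\tfrac{1}{2\theta}\bigr)\|u_{n}\|_{a}^{2}\le c_{a}+o_{n}(1)$, i.e. $\|u_{n}\|_{a}^{2}\le \tfrac{2\theta}{\theta-1}c_{a}+o_{n}(1)$. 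Since $(f_{3})$ only requires $\theta>1$, the factor $\tfrac{2\theta}{\theta-1}$ is not controlled, so the bound $c_{a}<\tfrac{4-\mu}{8}(1+C_{s})$ does not force $\|u_{n}\|_{H^{1}}^{2}$ below the Trudinger--Moser threshold, and your H\"older estimate of the exponential contribution is not uniform in $n$. The paper resolves this by reversing the order of the argument: it first proves $\int_{\R^{2}}\bigl(\tfrac{1}{|x|^{\mu}}*F(u_{n})\bigr)F(u_{n})\,dx\to0$ under vanishing \emph{without any exponential integrability}, via a three-level truncation --- the set $\{u_{n}\ge M_{\varepsilon}\}$ is handled by $(f_{6})$ together with the uniform bound \eqref{eq4.14} on the $f(u_{n})u_{n}$ Choquard term, the set $\{u_{n}\le N_{\varepsilon}\}$ by $(f_{1})$ and the Cauchy--Schwarz inequality for the Riesz potential (Lemma~\ref{Lem2.4}), and the intermediate set by the decay of $|\{u_{n}\ge N_{\varepsilon}\}|$ coming from $L^{4}$-vanishing. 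Only then does the energy identity yield the sharp bound $\lim_{n}\|u_{n}\|_{a}^{2}=2c_{a}<\tfrac{4-\mu}{4}(1+C_{s})$, which is what legitimizes the Trudinger--Moser control of $\|f(u_{n})u_{n}\|_{4/(4-\mu)}$ in the final step. You need to insert this intermediate argument (or an equivalent one) for your vanishing step to close; note also that this is where hypothesis $(f_{6})$ enters, which your sketch never uses.
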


\begin{proof}
	Arguing as in Lemma~\ref{Lem2.6}, one sees that $\mathcal{I}_{a}$ has the mountain pass geometry. Hence there exists a $(PS)_{c_{a}}$ sequence $\{u_{n}\}\subset W_{a}$ such that
	\[
	\mathcal{I}_{a}(u_{n})\to c_{a},\qquad
	\mathcal{I}_{a}'(u_{n})\to0\ \text{in }W_{a}^{*},
	\]
	where $c_{a}>0$ is the mountain pass level. Moreover, by Lemma~\ref{Lem3.1},
	\[
	c_{a}<\frac{4-\mu}{8}\bigl(1+C_s\bigr).
	\]
	Applying Lemma~\ref{Lem4.2}, up to a subsequence we have
	\[
	u_{n}\rightharpoonup u\ge0 \ \text{in }W_{a},\qquad
	\mathcal{I}_{a}'(u)=0,
	\]
	and, up to replacing $u_n$ by $u_n^{+}$, we may assume $u_{n}\ge0$ for all $n$.
	
	\textbf{Step 1}
	We show that $\{u_{n}\}$ cannot vanish in the sense of Lions. Suppose by contradiction that for some $r>0$,
	\begin{equation}\label{eq4.15}
		\lim_{n\to\infty}\sup_{y\in\R^{2}}
		\int_{B_{r}(y)}|u_{n}|^{2}\,dx=0.
	\end{equation}
	Then, by Lions' concentration--compactness lemma (see \cite{1996Willem}), it follows that
	\[
	u_{n}\to0\quad\text{in }L^{p}(\R^{2}),\quad 2<p<\infty.
	\]
	
	We claim that
	\begin{equation}\label{eq4.16}
		\int_{\R^{2}}
		\left(\frac{1}{|x|^{\mu}}*F(u_{n})\right)F(u_{n})\,dx\longrightarrow 0
		\quad\text{as }n\to\infty.
	\end{equation}
	From $(f_{3})$ and \eqref{eq4.14} we have
	\begin{equation}\label{eq4.17}
		\theta\int_{\R^{2}}
		\left(\frac{1}{|x|^{\mu}}*F(u_{n})\right)F(u_{n})\,dx
		\le\int_{\R^{2}}
		\left(\frac{1}{|x|^{\mu}}*F(u_{n})\right)f(u_{n})u_{n}\,dx
		\le C
	\end{equation}
	for some $C>0$ independent of $n$.
	
	Fix $\varepsilon>0$. By $(f_{6})$ there exist $M_{0}>0$ and $t_{0}>0$ such that
	\[
	F(t)\le M_{0}f(t)\quad\text{for }t\ge t_{0}.
	\]
	Choose $M_{\varepsilon}>\max\{t_{0},M_{0}C/\varepsilon\}$. Using $(f_{6})$ and \eqref{eq4.17}, we obtain
	\begin{equation}\label{eq4.18}
		\int_{\{u_{n}\ge M_{\varepsilon}\}}
		\left(\frac{1}{|x|^{\mu}}*F(u_{n})\right)F(u_{n})\,dx
		\le \varepsilon.
	\end{equation}
	
	Next, using $(f_{1})$ and the continuity of $f$ and $F$ at $0$, for the same $\varepsilon>0$ we can choose $N_{\varepsilon}\in(0,1)$ such that
	\[
	|F(t)|\le\varepsilon |t|^{\frac{4-\mu}{2}}
	\quad\text{and}\quad
	|f(t)t|\le\varepsilon |t|^{\frac{4-\mu}{2}}
	\qquad\text{for }|t|\le N_{\varepsilon}.
	\]
	Then, by $(f_{3})$, Lemmas~\ref{Lem2.3}--\ref{Lem2.4}, and \eqref{eq4.17},
	\begin{equation}\label{eq4.19}
		\begin{aligned}
			&\int_{\{u_{n}\le N_{\varepsilon}\}}
			\left(\frac{1}{|x|^{\mu}}*F(u_{n})\right)F(u_{n})\,dx\\
			&\le \frac{1}{\theta}
			\int_{\{u_{n}\le N_{\varepsilon}\}}
			\left(\frac{1}{|x|^{\mu}}*F(u_{n})\right)f(u_{n})u_{n}\,dx\\
			&\le \frac{\varepsilon}{\theta}
			\int_{\{u_{n}\le N_{\varepsilon}\}}
			\left(\frac{1}{|x|^{\mu}}*F(u_{n})\right)u_{n}^{\frac{4-\mu}{2}}\,dx\\
			&\le \frac{\varepsilon}{\theta}
			\left(\int_{\R^{2}}
			\left(\frac{1}{|x|^{\mu}}*F(u_{n})\right)F(u_{n})\,dx\right)^{\frac12}
			\left(\int_{\R^{2}}
			\left(\frac{1}{|x|^{\mu}}*u_{n}^{\frac{4-\mu}{2}}\right)
			u_{n}^{\frac{4-\mu}{2}}\,dx\right)^{\frac12}\\
			&\le C\,\varepsilon,
		\end{aligned}
	\end{equation}
	where $C>0$ is independent of $n$.
	
	On the intermediate set $\{N_{\varepsilon}\le u_{n}\le M_{\varepsilon}\}$, since $F$ is continuous, there exists $C_\varepsilon>0$ such that $|F(t)|\le C_\varepsilon$ for $t\in[N_\varepsilon,M_\varepsilon]$. Hence, using Lemma~\ref{Lem2.3} with $p=\frac{4}{4-\mu}$,
	\begin{equation}\label{eq4.20}
		\begin{aligned}
			\int_{\{N_{\varepsilon}\le u_{n}\le M_{\varepsilon}\}}
			\left(\frac{1}{|x|^{\mu}}*F(u_{n})\right)F(u_{n})\,dx
			&\le C_{\mu}\,\|F(u_n)\|_{\frac{4}{4-\mu}}\,\|F(u_n)\chi_{\{N_\varepsilon \le u_n\leq M_\varepsilon\}}\|_{\frac{4}{4-\mu}}\\
			&\le C\,|\{u_n\ge N_\varepsilon\}|^{\frac{4-\mu}{4}}
			\longrightarrow 0,
		\end{aligned}
	\end{equation}
	where we used that $|\{u_n\ge N_\varepsilon\}|\le N_\varepsilon^{-4}\|u_n\|_4^{4}\to0$ because $u_n\to0$ in $L^{4}(\R^{2})$.
	Combining \eqref{eq4.18}, \eqref{eq4.19}, and \eqref{eq4.20}, and using the arbitrariness of $\varepsilon>0$, we obtain \eqref{eq4.16}.
	
	Since $\{u_{n}\}$ is a $(PS)_{c_{a}}$ sequence, by \eqref{eq4.16} we have
	\begin{equation}\label{eq4.21}
		c_{a}
		=\lim_{n\to\infty}\mathcal{I}_{a}(u_{n})
		=\frac12\lim_{n\to\infty}\|u_{n}\|_{a}^{2}.
	\end{equation}
	Therefore
	\[
	\lim_{n\to\infty}\|u_{n}\|_{a}^{2}
	=2c_{a}<\frac{4-\mu}{4}(1+C_s).
	\]
	Thus there exist $\delta\in(0,1)$ and $n_{0}\in\N$ such that
	\begin{equation}\label{eq4.22}
		\|u_{n}\|_{a}^{2}
		\le\frac{4-\mu}{4}(1+C_s)(1-\delta),
		\qquad n\ge n_{0}.
	\end{equation}
	
	Using the Hardy--Littlewood--Sobolev inequality and $(f_{3})$ we have
	\[
	\int_{\R^{2}}
	\left(\frac{1}{|x|^{\mu}}*F(u_{n})\right)f(u_{n})u_{n}\,dx
	\le C\,\|F(u_{n})\|_{\frac{4}{4-\mu}}
	\|f(u_{n})u_{n}\|_{\frac{4}{4-\mu}},
	\qquad
	\|F(u_{n})\|_{\frac{4}{4-\mu}}
	\le\|f(u_{n})u_{n}\|_{\frac{4}{4-\mu}}.
	\]
	By Lemma~\ref{Lem2.3} and \eqref{eq4.16}, $\|F(u_{n})\|_{\frac{4}{4-\mu}}\to0$. Moreover, by $(f_{1})$--$(f_{2})$ and the uniform bound \eqref{eq4.22}, arguing as in Lemma~\ref{Lem2.5} one checks that $\{\|f(u_n)u_n\|_{\frac{4}{4-\mu}}\}$ is bounded. Hence
	\[
	\int_{\R^{2}}
	\left(\frac{1}{|x|^{\mu}}*F(u_{n})\right)f(u_{n})u_{n}\,dx
	\longrightarrow 0.
	\]
	On the other hand,
	\[
	\bigl\langle \mathcal{I}_{a}'(u_{n}),u_{n}\bigr\rangle
	=\|u_{n}\|_{a}^{2}
	-\int_{\R^{2}}
	\left(\frac{1}{|x|^{\mu}}*F(u_{n})\right)f(u_{n})u_{n}\,dx
	\longrightarrow 0,
	\]
	so $\|u_{n}\|_{a}\to0$. Together with \eqref{eq4.16} this yields $\mathcal{I}_{a}(u_n)\to0$, hence $c_a=0$, a contradiction. Therefore vanishing cannot occur, and there exist $r>0$, $\eta_{0}>0$ and a sequence $\{y_{n}\}\subset\R^{2}$ such that
	\[
	\liminf_{n\to\infty}
	\int_{B_{r}(y_{n})}u_{n}^{2}\,dx\ge\eta_{0}>0.
	\]
	
	\textbf{Step 2}
	Define
	\[
	v_{n}(x)=u_{n}(x+y_{n})\ge0.
	\]
	Since $\mathcal{I}_{a}$ is translation invariant, $\{v_{n}\}$ is again a $(PS)_{c_{a}}$ sequence. Up to a subsequence,
	\[
	v_{n}\rightharpoonup v\ge0\ \text{ in }W_{a},\qquad
	\mathcal{I}_{a}'(v)=0,
	\]
	and $v_n\to v$ in $L^{2}(B_r(0))$. Therefore
	\[
	\int_{B_{r}(0)}v^{2}\,dx
	=\lim_{n\to\infty}\int_{B_{r}(0)}v_{n}^{2}\,dx
	=\lim_{n\to\infty}\int_{B_{r}(y_{n})}u_{n}^{2}\,dx
	\ge\eta_{0}>0,
	\]
	so $v\ne0$.
	
	Since $\mathcal{I}_{a}'(v)=0$ and $v\ne0$, we have $v\in\mathcal{N}_{a}$, hence
	\[
	c_{a}\le\mathcal{I}_{a}(v).
	\]
	On the other hand, up to a subsequence,
    \[
	f(v_n)v_n-F(v_n)\to f(v)v-F(v)\quad\text{for a.e. }x\in\R^{2}.
    \]
	Since $f(t)t-F(t)\ge0$ for $t\ge0$ by $(f_{3})$, Fatou's lemma yields
	\[
	\begin{aligned}
		c_{a}
		&=\lim_{n\to\infty}
		\Bigl(\mathcal{I}_{a}(v_{n})
		-\frac12\bigl\langle \mathcal{I}_{a}'(v_{n}),v_{n}\bigr\rangle\Bigr)\\
		&=\frac12\liminf_{n\to\infty}
		\int_{\R^{2}}
		\left(\frac{1}{|x|^{\mu}}*F(v_{n})\right)
		\bigl(f(v_{n})v_{n}-F(v_{n})\bigr)\,dx\\
		&\ge\frac12
		\int_{\R^{2}}
		\left(\frac{1}{|x|^{\mu}}*F(v)\right)
		\bigl(f(v)v-F(v)\bigr)\,dx\\
		&=\mathcal{I}_{a}(v)-\frac12\bigl\langle \mathcal{I}_{a}'(v),v\bigr\rangle
		=\mathcal{I}_{a}(v).
	\end{aligned}
	\]
	Therefore $\mathcal{I}_{a}(v)=c_{a}$ and $v$ is a ground state solution of \eqref{eq3.1}.
	
	\textbf{Step 3}
	We already know $v\ge0$ and $v\not=0$. By the strong maximum principle for mixed local--nonlocal operators (see, for example, \cite{2024DipierroAMS} and references therein), we conclude that
	\[
	v>0\quad\text{in }\R^{2}.
	\]
	This completes the proof.
\end{proof}

\section{Ground state solution of the singularly perturbed problem}

\begin{Lem}\label{Lem5.1}
	Assume that $(V)$ and $(f_{1})$--$(f_{3})$ hold. Then there exists a constant
	$\alpha>0$, independent of $\varepsilon$, such that
	\[
	\|u\|_{\varepsilon} \ge \alpha,
	\qquad \forall\,u\in\mathcal{N}_{\varepsilon}.
	\]
\end{Lem}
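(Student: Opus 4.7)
The plan is to exploit the Nehari constraint together with the same growth estimate for $F$ and $f(t)t$ already used in Lemma~\ref{Lem2.6}(i), and then to observe that every constant appearing in that argument is controlled by $V_{0}$ and by the embedding constant of Lemma~\ref{Lem2.1}, both of which are independent of $\varepsilon$.

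First, for $u\in\mathcal{N}_{\varepsilon}$ the Nehari identity reads
\[
\|u\|_{\varepsilon}^{2}
=\int_{\R^{2}}\left(\frac{1}{|x|^{\mu}}*F(u)\right)f(u)\,u\,dx.
\]
Arguing as in the proof of Lemma~\ref{Lem2.6}(i), assumptions $(f_{1})$ and $(f_{2})$ yield $q>1$ and $k>0$ such that, for every $\eta>0$, there exists $C_{\eta}>0$ with
\[
|F(t)|+|f(t)t|
\le \eta\,|t|^{\frac{4-\mu}{2}}
+C_{\eta}\,|t|^{q}\bigl(\mathrm{e}^{k t^{2}}-1\bigr),
\qquad t\in\R.
\]
Applying the Hardy--Littlewood--Sobolev inequality (Lemma~\ref{Lem2.3}) with $N=2$, $t=r=\tfrac{4}{4-\mu}$, I would next estimate
\[
\int_{\R^{2}}\left(\frac{1}{|x|^{\mu}}*F(u)\right)f(u)\,u\,dx
\le C_{\mu}\,\|F(u)\|_{\frac{4}{4-\mu}}\,\|f(u)u\|_{\frac{4}{4-\mu}}.
\]

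Second, I would dispose of the exponential piece via Lemma~\ref{Lem2.5}. Let $C_{H}>0$ denote the embedding constant such that $\|u\|_{H^{1}(\R^{2})}\le C_{H}\|u\|_{\varepsilon}$ for all $u\in W_{\varepsilon}$; by assumption $(V)$ and Lemma~\ref{Lem2.1} the constant $C_{H}$ depends only on $V_{0}$, $s$, and $C_{s}$, and not on $\varepsilon$. Choose $\rho_{0}>0$ small enough that whenever $\|u\|_{\varepsilon}\le\rho_{0}$ one has $\tfrac{4}{4-\mu}\,k\,(C_{H}\rho_{0})^{2}<4\pi$. Then Lemma~\ref{Lem2.5} applied with $M=C_{H}\rho_{0}$ yields, together with $\|u\|_{2}\le V_{0}^{-1/2}\|u\|_{\varepsilon}$ and the Sobolev embedding $H^{1}(\R^{2})\hookrightarrow L^{m}(\R^{2})$,
\[
\|F(u)\|_{\frac{4}{4-\mu}}^{2}+\|f(u)u\|_{\frac{4}{4-\mu}}^{2}
\le C_{1}\,\|u\|_{\varepsilon}^{4-\mu}+C_{2}\,\|u\|_{\varepsilon}^{2q},
\]
with $C_{1},C_{2}>0$ independent of $\varepsilon$.

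Third, combining the previous two estimates, for every $u\in\mathcal{N}_{\varepsilon}$ with $\|u\|_{\varepsilon}\le\rho_{0}$ I obtain
\[
\|u\|_{\varepsilon}^{2}
\le C_{3}\,\|u\|_{\varepsilon}^{4-\mu}+C_{4}\,\|u\|_{\varepsilon}^{2q},
\]
with $C_{3},C_{4}>0$ independent of $\varepsilon$. Since $\mu<2$ and $q>1$, both exponents $4-\mu$ and $2q$ are strictly greater than $2$, so dividing by $\|u\|_{\varepsilon}^{2}>0$ gives
\[
1\le C_{3}\,\|u\|_{\varepsilon}^{2-\mu}+C_{4}\,\|u\|_{\varepsilon}^{2q-2},
\]
which forces $\|u\|_{\varepsilon}\ge\alpha$ for some $\alpha\in(0,\rho_{0}]$ depending only on $C_{3},C_{4},\mu,q$. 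Combining this with the trivial lower bound $\|u\|_{\varepsilon}\ge\rho_{0}$ when $\|u\|_{\varepsilon}>\rho_{0}$, I conclude that $\|u\|_{\varepsilon}\ge\alpha$ for every $u\in\mathcal{N}_{\varepsilon}$, uniformly in $\varepsilon$.

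The only delicate point is to keep all the constants $\varepsilon$-free. This is why I would track explicitly the constant $C_{H}$ from Lemma~\ref{Lem2.1} and the coercivity bound $V(\varepsilon x)\ge V_{0}$: both give rise to $\varepsilon$-independent estimates, so that the threshold $\rho_{0}$ and hence $\alpha$ depend only on $V_{0}$, $\mu$, $s$, $C_{s}$, and the growth constants in $(f_{1})$--$(f_{3})$.
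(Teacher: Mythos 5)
Your proposal is correct and follows essentially the same route as the paper: Nehari identity, the $(f_1)$--$(f_2)$ growth estimate, Hardy--Littlewood--Sobolev, a Trudinger--Moser control of the exponential factor valid below an $\varepsilon$-independent norm threshold (the paper thresholds at $\|u\|_{\varepsilon}^{2}<\tfrac{4-\mu}{8k}$, you at $\|u\|_{\varepsilon}\le\rho_{0}$, which is the same dichotomy), and then dividing the resulting superquadratic bound by $\|u\|_{\varepsilon}^{2}$. The uniformity in $\varepsilon$ is justified exactly as in the paper, via $V(\varepsilon x)\ge V_{0}$ and the embedding constant from Lemma~\ref{Lem2.1}.
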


\begin{proof}
	We use the standard convention that $f(t)=0$ for $t\le 0$, hence $F(t)=0$ for $t\le 0$.
	Combining $(f_{1})$ with $(f_{2})$, for any $\eta>0$ there exist $q>1$, $k>0$ and $C_{\eta}>0$ such that
	\[
	|f(s)|
	\le \eta |s|^{\frac{2-\mu}{2}}
	+C_{\eta}|s|^{q-1}\bigl(\mathrm{e}^{k4\pi s^{2}}-1\bigr),
	\qquad \forall s\in\R .
	\]
	Set $p=\frac{4}{4-\mu}$. By Lemma~\ref{Lem2.3} and $(f_3)$,
	\[
	\int_{\R^{2}}\Bigl(\frac{1}{|x|^{\mu}}*F(u)\Bigr)f(u)u\,dx
	\le C_0 \|F(u)\|_{p}\|f(u)u\|_{p}
	\le C_1 \|f(u)u\|_{p}^{2}.
	\]
	Using the above estimate on $f(u)u$ and $(a+b)^p\le 2^{p-1}(a^p+b^p)$, we obtain
	\begin{align}\label{eq5.1}
		\int_{\R^{2}}\Bigl(\frac{1}{|x|^{\mu}}*F(u)\Bigr)f(u)u\,dx
		&\le C_2\Biggl(
		\eta \int_{\R^{2}}|u|^{2}\,dx
		+C_{\eta}\int_{\R^{2}}
		|u|^{\frac{4q}{4-\mu}}
		\bigl(\mathrm{e}^{\frac{4k}{4-\mu}4\pi u^{2}}-1\bigr)\,dx
		\Biggr)^{\frac{4-\mu}{2}}.
	\end{align}

	We estimate the second integral in \eqref{eq5.1}. Let $A=\frac{4q}{4-\mu}>2$ and $B=\frac{4k}{4-\mu}4\pi$.
	By H\"older's inequality,
	\begin{equation}\label{eq5.2}
	  \int_{\R^{2}} |u|^{A}\bigl(\mathrm{e}^{B u^{2}}-1\bigr)\,dx
	\le \|u\|_{2A}^{A}
	\left(\int_{\R^{2}}\bigl(\mathrm{e}^{2B u^{2}}-1\bigr)\,dx\right)^{\frac12}.  
	\end{equation}
	By the Sobolev embedding on $\R^2$, $\|u\|_{2A}\le C \|u\|_{H^1(\R^2)}\le C\|u\|_{\varepsilon}$.
	Set $v=u/\|u\|_{\varepsilon}$. Then $\|\nabla v\|_2\le 1$ and, since $V(\varepsilon x)\ge V_0>0$,
	\[
	\|v\|_2^2 \le \frac{1}{V_0}\int_{\R^2}V(\varepsilon x)v^2\,dx \le \frac{1}{V_0}.
	\]
    We now distinguish two cases.

	Case 1. If
	\[
	\|u\|_{\varepsilon}^{2} < \frac{4-\mu}{8k},
	\]
	then Proposition~\ref{pro2.2} applied to $v$ (with $M=1/\sqrt{V_0}$) yields
	\[
	\int_{\R^{2}}\bigl(\mathrm{e}^{2B u^{2}}-1\bigr)\,dx
	=\int_{\R^{2}}\Bigl(\mathrm{e}^{(2B\|u\|_{\varepsilon}^{2}) v^{2}}-1\Bigr)\,dx
	\le C_3,
	\]
	where $C_3>0$ is independent of $u$ and $\varepsilon$. Consequently, under \eqref{eq5.2}
	\[
	\int_{\R^{2}} |u|^{A}\bigl(\mathrm{e}^{B u^{2}}-1\bigr)\,dx
	\le C_4 \|u\|_{\varepsilon}^{A}.
	\]
	Plugging this bound into \eqref{eq5.1} and using $\|u\|_2\le V_0^{-1/2}\|u\|_\varepsilon$, we obtain
	\begin{equation}\label{eq5.3}
		\int_{\R^{2}}\Bigl(\frac{1}{|x|^{\mu}}*F(u)\Bigr)f(u)u\,dx
		\le \eta C_5 \|u\|_{\varepsilon}^{4-\mu}
		+C_5 C_{\eta}\|u\|_{\varepsilon}^{2q},
	\end{equation}
	for some $C_5>0$ independent of $\varepsilon$.

     Since $u\in\mathcal{N}_{\varepsilon}$,
	\[
	\|u\|_{\varepsilon}^{2}
	=\int_{\R^{2}}\Bigl(\frac{1}{|x|^{\mu}}*F(u)\Bigr)f(u)u\,dx.
	\]
	Combining with \eqref{eq5.3} gives
	\[
	\|u\|_{\varepsilon}^{2}
	\le \eta C_5 \|u\|_{\varepsilon}^{4-\mu}
	+C_5 C_{\eta}\|u\|_{\varepsilon}^{2q}.
	\]
	Let $t=\|u\|_{\varepsilon}>0$. Dividing by $t^2$ yields
	\[
	1 \le \eta C_5 t^{2-\mu} + C_5 C_{\eta} t^{2q-2}.
	\]
	Since $2-\mu>0$ and $2q-2>0$, the right-hand side tends to $0$ as $t\to 0^+$.
	Hence there exists $\alpha_1\in(0,1)$, independent of $\varepsilon$, such that the above inequality cannot hold for $t\in(0,\alpha_1]$.
	Therefore $\|u\|_{\varepsilon}\ge \alpha_1$ in Case 1.

	Case 2. If
	\[
	\|u\|_{\varepsilon}^{2} \ge \frac{4-\mu}{8k},
	\]
	then
	\[
	\|u\|_{\varepsilon}\ge \sqrt{\frac{4-\mu}{8k}}=:\alpha_0.
	\]

	Finally, setting
	\[
	\alpha := \min\{\alpha_0,\alpha_1\}>0,
	\]
	we conclude that $\|u\|_{\varepsilon}\ge \alpha$ for all $u\in\mathcal{N}_{\varepsilon}$, with $\alpha$ independent of $\varepsilon$.

\end{proof}

\begin{Lem}\label{Lem5.2}
	Assume that $(V)$ and $(f_1)$--$(f_6)$ hold, and let $c_\varepsilon$ be the mountain pass level associated with $\mathcal J_\varepsilon$ (see Lemma~\ref{Lem2.8}). Then
	\[
	\lim_{\varepsilon\to0} c_\varepsilon = c_{V_0},
	\]
	where $c_{V_0}$ is the minimax value defined in \eqref{eq3.2} with $a\equiv V_0$. Hence, by Lemma~\ref{Lem3.1}, there exists $\varepsilon_0>0$ such that
	\[
	c_\varepsilon < \frac{4-\mu}{8}\bigl(1+C_s\bigr),
	\qquad \forall\,\varepsilon\in(0,\varepsilon_0).
	\]
	Moreover, one has $c_{V_0}<c_{V_\infty}$ and therefore
	\[
	\lim_{\varepsilon\to0} c_\varepsilon = c_{V_0} < c_{V_\infty}.
	\]
\end{Lem}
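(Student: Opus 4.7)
I would prove the lemma by combining a sandwich argument for the convergence $c_\varepsilon\to c_{V_0}$ with a direct Nehari-type comparison for the strict inequality; once $\lim_{\varepsilon\to 0}c_\varepsilon=c_{V_0}$ is established, the uniform bound $c_\varepsilon<\tfrac{4-\mu}{8}(1+C_s)$ is an immediate consequence of Lemma~\ref{Lem3.1}. The argument splits naturally into three steps: the easy lower bound $c_{V_0}\le \liminf_{\varepsilon\to 0}c_\varepsilon$, the more delicate upper bound $\limsup_{\varepsilon\to 0}c_\varepsilon\le c_{V_0}$, and finally the strict inequality $c_{V_0}<c_{V_\infty}$.

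For the lower bound, I use the pointwise inequality $V(\varepsilon x)\ge V_0$, which gives $\mathcal J_\varepsilon(u)\ge \mathcal I_{V_0}(u)$ for every $u\in W_\varepsilon\subseteq H^1(\R^2)=W_{V_0}$. Any admissible path $g\in\Gamma_\varepsilon$ is then automatically admissible for $\mathcal I_{V_0}$, and taking the sup along the path followed by the infimum over paths yields $c_{V_0}\le c_\varepsilon$ for every $\varepsilon>0$.

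The upper bound is the core step. Fix $\eta>0$. By continuity of $V$ and the definition $V_0=\inf V$, for every $\delta>0$ there is $x_\delta\in\R^2$ with $V(x_\delta)<V_0+\delta$. Theorem~\ref{Thm4.3} applied with $a=V_0$ furnishes a positive ground state $w$ of the autonomous problem, so that $c_{V_0}=\mathcal I_{V_0}(w)=\max_{t\ge 0}\mathcal I_{V_0}(tw)$. I test $\mathcal J_\varepsilon$ against the translated profile $\tilde w_\varepsilon(x):=w(x-x_\delta/\varepsilon)$. Because the gradient term, the Gagliardo seminorm and the Riesz convolution are all translation invariant, the only $\varepsilon$-dependent piece of $\mathcal J_\varepsilon(t\tilde w_\varepsilon)$ is the potential contribution, and the change of variable $y=x-x_\delta/\varepsilon$ yields
\[
\int_{\R^{2}}V(\varepsilon x)\tilde w_\varepsilon^{2}\,dx=\int_{\R^{2}}V(\varepsilon y+x_\delta)\,w(y)^{2}\,dy\longrightarrow V(x_\delta)\,\|w\|_{2}^{2}
\]
as $\varepsilon\to 0$, by continuity of $V$ and dominated convergence (using the $L^{\infty}$-character of $V$ on compacta together with the decay of the ground state $w$). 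Hence, for each fixed $t\ge 0$, $\mathcal J_\varepsilon(t\tilde w_\varepsilon)\to \mathcal I_{V(x_\delta)}(tw)$. The fiber map $t\mapsto\mathcal J_\varepsilon(t\tilde w_\varepsilon)$ inherits the mountain-pass geometry of Lemma~\ref{Lem2.6} uniformly in $\varepsilon$, so its maximizer $t_\varepsilon$ stays in a compact subset of $(0,\infty)$ and the pointwise limit upgrades to $\max_{t\ge 0}\mathcal J_\varepsilon(t\tilde w_\varepsilon)\to \max_{t\ge 0}\mathcal I_{V(x_\delta)}(tw)$. Writing
\[
\mathcal I_{V(x_\delta)}(tw)=\mathcal I_{V_0}(tw)+\tfrac{t^{2}}{2}(V(x_\delta)-V_0)\|w\|_{2}^{2}
\]
and using $V(x_\delta)-V_0<\delta$ together with the fact that the (perturbed) maximizer $t^{\ast}_{\delta}$ of $\mathcal I_{V(x_\delta)}(tw)$ tends to the unique maximizer $t=1$ of $\mathcal I_{V_0}(tw)$ as $\delta\to 0^{+}$ gives $\max_{t\ge 0}\mathcal I_{V(x_\delta)}(tw)\le c_{V_0}+C\delta$ for some constant $C=C(w)$. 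Taking $\delta<\eta/C$ and then $\varepsilon$ small, I conclude $c_\varepsilon=c_\varepsilon^{**}\le \max_{t\ge 0}\mathcal J_\varepsilon(t\tilde w_\varepsilon)<c_{V_0}+\eta$, so $\limsup_{\varepsilon\to 0}c_\varepsilon\le c_{V_0}$.

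For $c_{V_0}<c_{V_\infty}$, I prove strict monotonicity of $a\mapsto c_a$. Given $0<a_1<a_2$, Theorem~\ref{Thm4.3} supplies a ground state $u_2\in\mathcal N_{a_2}$ with $\mathcal I_{a_2}(u_2)=c_{a_2}$, and the autonomous analogue of Lemma~\ref{Lem2.7} yields a unique $t_1>0$ with $t_1u_2\in\mathcal N_{a_1}$. Then
\[
c_{a_1}\le \mathcal I_{a_1}(t_1u_2)=\mathcal I_{a_2}(t_1u_2)-\frac{t_1^{2}}{2}(a_2-a_1)\|u_2\|_{2}^{2}\le c_{a_2}-\frac{t_1^{2}}{2}(a_2-a_1)\|u_2\|_{2}^{2}<c_{a_2},
\]
and specializing to $a_1=V_0$, $a_2=V_\infty$ closes the argument. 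The step I expect to be the main obstacle is the uniform control of the maximizer $t_\varepsilon$ in the upper-bound argument: the lower bound $t_\varepsilon\ge c>0$ comes from Lemma~\ref{Lem2.6}(i) together with the uniform bound on $\|\tilde w_\varepsilon\|_\varepsilon$ provided by the dominated-convergence estimate, while the upper bound on $t_\varepsilon$ requires an $\varepsilon$-uniform version of the super-$2\theta$ blow-down from $(f_3)$ used in Lemma~\ref{Lem2.6}(ii), which holds because the Choquard nonlinearity is $\varepsilon$-independent.
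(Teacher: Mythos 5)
Your overall strategy is the same as the paper's: the lower bound $c_\varepsilon\ge c_{V_0}$ from $V(\varepsilon x)\ge V_0$, the upper bound by testing $\mathcal J_\varepsilon$ on a translated autonomous profile and controlling the Nehari projection parameter, and the strict monotonicity $a\mapsto c_a$ via reprojection of a Nehari element of $\mathcal I_{a_2}$ onto $\mathcal N_{a_1}$. Your monotonicity step is in fact slightly more careful than the paper's (you start from a ground state $u_2$ furnished by Theorem~\ref{Thm4.3}, which genuinely yields the strict inequality, whereas taking an infimum of strict inequalities over all of $\mathcal N_{a_2}$ only gives $\le$).

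There is, however, one genuine gap in your upper-bound step: you translate the full ground state $w$ and justify
$\int_{\R^2}V(\varepsilon y+x_\delta)w(y)^2\,dy\to V(x_\delta)\|w\|_2^2$ by dominated convergence, invoking ``the $L^\infty$-character of $V$ on compacta together with the decay of $w$.'' Assumption $(V)$ only requires $\liminf_{|x|\to\infty}V(x)=V_\infty<+\infty$; it does not make $V$ bounded above (this is precisely why the paper defines $W_\varepsilon$ as a proper subspace of $H^1(\R^2)$ via the weighted $L^2$ condition). If $V$ is unbounded, there is no dominating function for $V(\varepsilon y+x_\delta)w(y)^2$, and worse, your test function $\tilde w_\varepsilon=w(\cdot-x_\delta/\varepsilon)$ need not even belong to $W_\varepsilon$, since no quantitative decay of $w$ has been established at this stage (the uniform vanishing of Lemma~\ref{Lem6.3} comes later and is not a rate). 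The paper's proof avoids this by first approximating $w$ in $W_{V_0}$ by a compactly supported $\varphi_\delta\in C_0^\infty(\R^2)$, reprojecting onto $\mathcal N_{V_0}$ to get $w_\delta$ with $\mathcal I_{V_0}(w_\delta)<c_{V_0}+\delta$, and only then translating; compact support makes the potential term converge by continuity of $V$ alone. Your argument is repaired by inserting exactly this truncation--reprojection step; the remainder of your control of the fiber maximizer $t_\varepsilon$ (lower bound from Lemma~\ref{Lem5.1}, upper bound from the translation invariance of the Choquard term and the $t^{2\theta}$ growth from $(f_3)$) then goes through as you describe.
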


\begin{proof}
	Let $w\in W_{V_0}$ be a positive ground state of \eqref{eq3.1} with $a\equiv V_0$, so that
	\[
	w\in\mathcal N_{V_0},
	\qquad
	\mathcal I_{V_0}(w)=c_{V_0}.
	\]
	Fix $\delta>0$ and choose $\varphi_\delta\in C_0^\infty(\R^2)$, $\varphi_\delta\ge 0$, such that
	\[
	\|\varphi_\delta-w\|_{W_{V_0}}<\delta.
	\]
	By Lemma~\ref{Lem2.7} for $\mathcal I_{V_0}$, there exists a unique $t_\delta>0$ such that
	$t_\delta\varphi_\delta\in\mathcal N_{V_0}$. Set
	\[
	w_\delta=t_\delta\varphi_\delta\in C_0^\infty(\R^2)\cap\mathcal N_{V_0}.
	\]
	Taking $\delta$ smaller if necessary, we may assume
	\[
	\mathcal I_{V_0}(w_\delta)<c_{V_0}+\delta.
	\]

	Let $x_0\in\R^2$ be such that $V(x_0)=V_0$.
	Fix any sequence $\varepsilon_n\to 0$ and define the translated test functions
	\[
	w_n(x)=w_\delta\Bigl(x-\frac{x_0}{\varepsilon_n}\Bigr).
	\]
	Then $w_n\in W_{\varepsilon_n}$ and $\{w_n\}$ is bounded in $W_{\varepsilon_n}$.
	Moreover, by translation invariance of the local and fractional terms and of the Choquard term,
	\[
	\int_{\R^2}|\nabla w_n|^2\,dx=\int_{\R^2}|\nabla w_\delta|^2\,dx,
	\qquad
	\iint_{\R^2\times\R^2}\frac{|w_n(x)-w_n(y)|^2}{|x-y|^{2+2s}}\,dx\,dy
	=
	\iint_{\R^2\times\R^2}\frac{|w_\delta(x)-w_\delta(y)|^2}{|x-y|^{2+2s}}\,dx\,dy,
	\]
	and
	\[
	\int_{\R^2}\Bigl(\frac{1}{|x|^\mu}*F(w_n)\Bigr)F(w_n)\,dx
	=
	\int_{\R^2}\Bigl(\frac{1}{|x|^\mu}*F(w_\delta)\Bigr)F(w_\delta)\,dx.
	\]
	For the potential term, by the change of variables $z=x-\frac{x_0}{\varepsilon_n}$,
	\[
	\int_{\R^2}V(\varepsilon_n x)w_n^2\,dx
	=
	\int_{\R^2}V(x_0+\varepsilon_n z)\,w_\delta^2(z)\,dz
	\longrightarrow
	V_0\int_{\R^2}w_\delta^2\,dz,
	\]
	since $w_\delta$ has compact support and $V$ is continuous.

	For each $n$, by Lemma~\ref{Lem2.7} applied to $\mathcal J_{\varepsilon_n}$, there exists a unique $t_n>0$ such that
	\[
	t_n w_n\in\mathcal N_{\varepsilon_n}.
	\]

	We show that $\{t_n\}$ is bounded and bounded away from $0$.
	By Lemma~\ref{Lem5.1}, there exists $\alpha>0$ independent of $\varepsilon$ such that
	\[
	\|u\|_\varepsilon\ge \alpha\qquad \forall\,u\in\mathcal N_\varepsilon.
	\]
	Hence $\|t_n w_n\|_{\varepsilon_n}\ge\alpha$ for all $n$. Since $\sup_n\|w_n\|_{\varepsilon_n}<\infty$, it follows that $t_n\ge c_1>0$ for some $c_1$ independent of $n$.

	Assume by contradiction that $t_n\to+\infty$.
	Since $w_\delta\ge0$ and $w_\delta\not=0$, there exist a measurable set $E\subset\R^2$ with $|E|>0$ and a constant $m>0$ such that
	\[
	w_\delta(x)\ge m\quad \text{for a.e. }x\in E.
	\]
	 we have $w_n(x+\frac{x_0}{\varepsilon_n})=w_\delta(x)$ on $E$, hence $t_n w_n(x+\frac{x_0}{\varepsilon_n})=t_nw_\delta(x)\ge t_n m$ on $E$.

	Fix $\sigma\in(0,\beta)$. By $(f_5)$ there exists $T_\sigma>0$ such that
	\[
	t f(t)F(t)\ge (\beta-\sigma)\,e^{8\pi t^2}\qquad \text{for all }t\ge T_\sigma.
	\]
Then there exists $N$, when $n>N$, $t_nm>T_\sigma$, we have
	\[
	(t_n w_n)\,f(t_n w_n)\,F(t_n w_n)\ge (\beta-\sigma)\,e^{8\pi (t_n w_n)^2}\ge (\beta-\sigma)\,e^{8\pi t_n^2 m^2}
	.
	\]

	Since $t_n w_n\in\mathcal N_{\varepsilon_n}$,
	\[
	t_n^2\|w_n\|_{\varepsilon_n}^2
	=\int_{\R^2}\Bigl(\frac{1}{|x|^\mu}*F(t_n w_n)\Bigr) f(t_n w_n)\,t_n w_n\,dx.
	\]
	Set $F=E+\frac{x_0}{\varepsilon_n}$,\,\,$D=\sup\{|x-y|:\ x,y\in F\}<\infty$ and $|F|>0$.
    For $x\in F$,
	\[
	\Bigl(\frac{1}{|x|^\mu}*F(t_n w_n)\Bigr)(x)
	\ge \int_F \frac{F(t_n w_n(y+\frac{x_0}{\varepsilon_n}))}{|x-y|^\mu}\,dy
	\ge \frac{1}{D^\mu}\int_F F(t_n w_n(y+\frac{x_0}{\varepsilon_n}))\,dy.
	\]
	Therefore,
	\[
	t_n^2\|w_n\|_{\varepsilon_n}^2
	\ge \frac{1}{D^\mu}
	\Bigl(\int_F F(t_n w_n)\,dx\Bigr)
	\Bigl(\int_F f(t_n w_n)\,t_n w_n\,dx\Bigr).
	\]
	By Cauchy--Schwarz inequality,
	\[
	\Bigl(\int_F F(t_n w_n)\,dx\Bigr)\Bigl(\int_F f(t_n w_n)\,t_n w_n\,dx\Bigr)
	\ge \Bigl(\int_F \sqrt{F(t_n w_n)\,f(t_n w_n)\,t_n w_n}\,dx\Bigr)^2.
	\]
	On $E$ and for $n$ large,
	\[
	\sqrt{F(t_n w_n)\,f(t_n w_n)\,t_n w_n}
	\ge \sqrt{\beta-\sigma}\,e^{4\pi (t_n w_n)^2}
	\ge \sqrt{\beta-\sigma}\,e^{4\pi m^2 t_n^2}.
	\]
	Hence
	\[
	t_n^2\|w_n\|_{\varepsilon_n}^2
	\ge \frac{\beta-\sigma}{D^\mu}|F|^2 e^{8\pi m^2 t_n^2}=\frac{\beta-\sigma}{D^\mu}|E|^2 e^{8\pi m^2 t_n^2},
	\]
	which contradicts the boundedness of $\|w_n\|_{\varepsilon_n}$ and the fact that the left-hand side grows at most like $t_n^2$.
	Therefore $\{t_n\}$ is bounded above. Thus, up to a subsequence,
	\[
	t_n\to t_0>0.
	\]

	We now prove $t_0=1$.
	From $\langle \mathcal J'_{\varepsilon_n}(t_n w_n),t_n w_n\rangle=0$ we have
	\[
	t_n^2\|w_n\|_{\varepsilon_n}^2
	=
	\int_{\R^2}\Bigl(\frac{1}{|x|^\mu}*F(t_n w_n)\Bigr) f(t_n w_n)\,t_n w_n\,dx.
	\]
	Define $\tilde w_n(x)=w_\delta(x)$ and note that $t_n w_n(x)=t_n \tilde w_n\bigl(x-\frac{x_0}{\varepsilon_n}\bigr)$.
	By translation invariance of the Choquard form, the right-hand side equals
	\[
	\int_{\R^2}\Bigl(\frac{1}{|x|^\mu}*F(t_n w_\delta)\Bigr) f(t_n w_\delta)\,t_n w_\delta\,dx.
	\]
	Since $w_\delta\in C_0^\infty(\R^2)$ and $t_n\to t_0$, we have pointwise convergence
	$t_n w_\delta\to t_0 w_\delta$ and a uniform bound $|t_n w_\delta|\le C$.
	By continuity of $F$ and $f(\cdot)\cdot$, it follows that, with $p=\frac{4}{4-\mu}$,
	\[
	F(t_n w_\delta)\to F(t_0 w_\delta)\quad\text{in }L^p(\R^2),
	\qquad
	f(t_n w_\delta)\,t_n w_\delta\to f(t_0 w_\delta)\,t_0 w_\delta\quad\text{in }L^p(\R^2).
	\]
	Then Lemma~\ref{Lem2.3} yields
	\[
	\int_{\R^2}\Bigl(\frac{1}{|x|^\mu}*F(t_n w_\delta)\Bigr) f(t_n w_\delta)\,t_n w_\delta\,dx
	\longrightarrow
	\int_{\R^2}\Bigl(\frac{1}{|x|^\mu}*F(t_0 w_\delta)\Bigr) f(t_0 w_\delta)\,t_0 w_\delta\,dx.
	\]
	Moreover, from the convergence of $\|w_n\|_{\varepsilon_n}^2$ to $\|w_\delta\|_{V_0}^2$, we can pass to the limit in the Nehari identity and get
	\[
	\langle \mathcal I'_{V_0}(t_0 w_\delta),t_0 w_\delta\rangle=0,
	\]
	so $t_0 w_\delta\in\mathcal N_{V_0}$. Since $w_\delta\in\mathcal N_{V_0}$ and the Nehari scaling is unique by Lemma~\ref{Lem2.7},
	we conclude $t_0=1$, hence $t_n\to 1$.

	Using $t_n w_n\in\mathcal N_{\varepsilon_n}$ and the definition of $c_{\varepsilon_n}$,
	\[
	c_{\varepsilon_n}\le \mathcal J_{\varepsilon_n}(t_n w_n).
	\]
	By the convergences above and $t_n\to 1$, we obtain
	\[
	\mathcal J_{\varepsilon_n}(t_n w_n)
	=
	\mathcal I_{V_0}(t_n w_\delta)
	+\frac{t_n^2}{2}\int_{\R^2}\bigl(V(x_0+\varepsilon_n x)-V_0\bigr)w_\delta^2\,dx
	\longrightarrow
	\mathcal I_{V_0}(w_\delta).
	\]
	Therefore,
	\[
	\limsup_{n\to\infty}c_{\varepsilon_n}\le \mathcal I_{V_0}(w_\delta)\le c_{V_0}+\delta.
	\]
	Letting $\delta\to 0$ gives
	\[
	\limsup_{\varepsilon\to 0}c_\varepsilon\le c_{V_0}.
	\]

	On the other hand, since $V(\varepsilon x)\ge V_0$ for all $x\in\R^2$, we have
	\[
	\mathcal J_\varepsilon(u)\ge \mathcal I_{V_0}(u)\qquad \forall\,u\in W_\varepsilon,
	\]
	which implies
	\[
	c_\varepsilon\ge c_{V_0},
	\qquad
	\forall\,\varepsilon>0.
	\]
	Consequently, $\lim_{\varepsilon\to 0}c_\varepsilon=c_{V_0}$.

	By Lemma~\ref{Lem3.1} with $a\equiv V_0$, we have
	\[
	c_{V_0}<\frac{4-\mu}{8}(1+C_s),
	\]
	and hence $c_\varepsilon<\frac{4-\mu}{8}(1+C_s)$ for all $\varepsilon$ small enough.

	Finally, to compare $c_{V_0}$ and $c_{V_\infty}$, note that if $a_1<a_2$ then $c_{a_1}<c_{a_2}$.
	Indeed, for any $u\in\mathcal N_{a_2}$ one has
	\[
	\langle \mathcal I'_{a_1}(u),u\rangle
	=
	\langle \mathcal I'_{a_2}(u),u\rangle-(a_2-a_1)\|u\|_2^2
	=-(a_2-a_1)\|u\|_2^2<0,
	\]
	so there exists a unique $t(u)\in(0,1)$ such that $t(u)u\in\mathcal N_{a_1}$.
	Then $\mathcal I_{a_1}(t(u)u)<\mathcal I_{a_2}(u)$, and taking the infimum over $u\in\mathcal N_{a_2}$ yields $c_{a_1}<c_{a_2}$.
	Since $V_0<V_\infty$, it follows that $c_{V_0}<c_{V_\infty}$ and thus
	\[
	\lim_{\varepsilon\to 0}c_\varepsilon=c_{V_0}<c_{V_\infty}.
	\]
\end{proof}

\begin{Lem}\label{Lem5.3}
Assume that $(V)$ and $(f_1)$--$(f_6)$ hold. Let $\varepsilon\in(0,\varepsilon_0)$ and let $\{u_n\}$ be a $(PS)_{c_{\varepsilon}}$ sequence for $\mathcal{J}_{\varepsilon}$, that is
\[
\mathcal{J}_{\varepsilon}(u_n)\to c_{\varepsilon},
\qquad
\mathcal{J}_{\varepsilon}'(u_n)\to 0
\quad\text{in }W_\varepsilon^{-1}
\quad\text{as }n\to\infty.
\]
Then $\mathcal{J}_{\varepsilon}$ satisfies the $(PS)_{c_{\varepsilon}}$ condition: there exists $u_{\varepsilon}\in W_{\varepsilon}$ such that
\[
u_n\to u_{\varepsilon}\quad\text{strongly in }W_{\varepsilon}.
\]
\end{Lem}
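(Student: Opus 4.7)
\textbf{Proof plan for Lemma~\ref{Lem5.3}.}
The strategy is the classical three-step argument for PS sequences at a subcritical level, with the strict inequality $c_\varepsilon<c_{V_\infty}$ from Lemma~\ref{Lem5.2} used to rule out escape of mass to infinity. First, exactly as in Lemma~\ref{Lem4.2}(i), combining $\mathcal{J}_\varepsilon(u_n)-\frac{1}{2\theta}\langle\mathcal{J}_\varepsilon'(u_n),u_n\rangle$ with the pointwise inequality $\frac{1}{2\theta}f(t)t-\frac12 F(t)\ge 0$ coming from $(f_3)$ yields $\bigl(\tfrac12-\tfrac{1}{2\theta}\bigr)\|u_n\|_\varepsilon^2\le c_\varepsilon+1+\|u_n\|_\varepsilon$ for $n$ large, hence $\{u_n\}$ is bounded in $W_\varepsilon$. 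Along a subsequence $u_n\rightharpoonup u_\varepsilon$ in $W_\varepsilon$, $u_n\to u_\varepsilon$ a.e.\ and in $L^q_{\mathrm{loc}}(\R^2)$ for every $q\in[1,\infty)$. The sign argument of Lemma~\ref{Lem4.2}(ii) gives $u_\varepsilon\ge 0$, and since the bound on $\|u_n\|_\varepsilon$ makes $\int(\frac{1}{|x|^\mu}*F(u_n))f(u_n)u_n\,dx$ uniformly bounded, Lemma~\ref{Lem4.1} allows me to pass to the limit in $\langle\mathcal{J}_\varepsilon'(u_n),\varphi\rangle$ for each $\varphi\in C_c^\infty(\R^2)$, so $\mathcal{J}_\varepsilon'(u_\varepsilon)=0$; by $(f_3)$ also $\mathcal{J}_\varepsilon(u_\varepsilon)\ge 0$.

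Next I would perform a Brezis--Lieb splitting on $v_n:=u_n-u_\varepsilon$. Each quadratic piece of $\|\cdot\|_\varepsilon^2$ (local gradient, Gagliardo seminorm, weighted $L^2$) satisfies the classical Brezis--Lieb identity under weak convergence, giving $\|u_n\|_\varepsilon^2=\|u_\varepsilon\|_\varepsilon^2+\|v_n\|_\varepsilon^2+o(1)$. For the Choquard energy, the Moroz--Van Schaftingen-type Brezis--Lieb lemma adapted to the present setting (combining a.e.\ convergence, Hardy--Littlewood--Sobolev, and the Trudinger--Moser equi-integrability of Lemma~\ref{Lem2.5}, applicable because the level bound $c_\varepsilon<\frac{4-\mu}{8}(1+C_s)$ keeps $\|v_n\|_\varepsilon^2$ strictly below the critical Moser threshold $\frac{4-\mu}{4}(1+C_s)$) produces
\[
\int\Bigl(\tfrac{1}{|x|^\mu}*F(u_n)\Bigr)F(u_n)\,dx=\int\Bigl(\tfrac{1}{|x|^\mu}*F(u_\varepsilon)\Bigr)F(u_\varepsilon)\,dx+\int\Bigl(\tfrac{1}{|x|^\mu}*F(v_n)\Bigr)F(v_n)\,dx+o(1),
\]
and the analogous identity with $f(\cdot)\cdot$ as one of the factors. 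Consequently $\mathcal{J}_\varepsilon(v_n)\to c_\varepsilon-\mathcal{J}_\varepsilon(u_\varepsilon)\in[0,c_\varepsilon]$ and $\langle\mathcal{J}_\varepsilon'(v_n),v_n\rangle=o(1)$.

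I would then apply Lions' concentration--compactness dichotomy to $\{v_n\}$. If vanishing occurs, i.e.\ $\sup_{y\in\R^2}\int_{B_1(y)}v_n^2\,dx\to 0$, then $v_n\to 0$ in $L^q(\R^2)$ for every $q>2$, and an adaptation of Step~1 in the proof of Theorem~\ref{Thm4.3} --- slicing into $\{v_n\le N_\varepsilon\}$, $\{v_n\ge M_\varepsilon\}$ and an intermediate layer, and using $(f_1),(f_2),(f_6)$, Lemmas~\ref{Lem2.3}--\ref{Lem2.5}, together with the subcritical bound on $\|v_n\|_\varepsilon$ --- forces $\int(\frac{1}{|x|^\mu}*F(v_n))f(v_n)v_n\,dx\to 0$; combined with $\langle\mathcal{J}_\varepsilon'(v_n),v_n\rangle=o(1)$ this gives $\|v_n\|_\varepsilon\to 0$. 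Otherwise there exist $r,\eta>0$ and $y_n\in\R^2$ with $\int_{B_r(y_n)}v_n^2\,dx\ge\eta$; since $v_n\to 0$ in $L^2_{\mathrm{loc}}$, necessarily $|y_n|\to\infty$, and $\tilde v_n(x):=v_n(x+y_n)\rightharpoonup\tilde v\ne 0$ along a subsequence. Using $(V)$, $|y_n|\to\infty$, and Fatou on the weighted $L^2$-term while applying Lemma~\ref{Lem4.1} to the translated sequence, one shows that $\tilde v$ is a nontrivial critical point of $\mathcal{I}_{V^*}$ for some constant $V^*\ge V_\infty$, hence $\mathcal{I}_{V^*}(\tilde v)\ge c_{V^*}\ge c_{V_\infty}$ by the monotonicity $a\mapsto c_a$ established at the end of Lemma~\ref{Lem5.2}. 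The splitting then yields $c_\varepsilon\ge \mathcal{J}_\varepsilon(u_\varepsilon)+c_{V_\infty}\ge c_{V_\infty}$, contradicting Lemma~\ref{Lem5.2}. Therefore vanishing must occur, and $u_n\to u_\varepsilon$ in $W_\varepsilon$.

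The hard part will be the Brezis--Lieb splitting for the Choquard energy in the exponential-critical regime: the classical polynomial-growth argument of Moroz--Van Schaftingen has to be replaced by one combining Trudinger--Moser equi-integrability (Lemma~\ref{Lem2.5}) with Hardy--Littlewood--Sobolev, and the scheme only works because the level bound $c_\varepsilon<\frac{4-\mu}{8}(1+C_s)$ keeps $\|v_n\|_\varepsilon$ strictly below the critical Moser threshold; this is precisely where the restriction $\varepsilon<\varepsilon_0$ enters. A secondary delicate point is the identification of the limit problem at infinity, where one must argue via Fatou that the asymptotic potential controlling $\tilde v$ dominates $V_\infty$ and, using Lemma~\ref{Lem4.1} on the translated sequence, verify that $\tilde v$ lies on some limiting Nehari manifold whose energy is bounded below by $c_{V_\infty}$.
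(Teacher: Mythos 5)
Your overall architecture (bounded PS sequence, weak limit is a critical point, then Brezis--Lieb splitting of $v_n=u_n-u_\varepsilon$ followed by Lions' dichotomy on $v_n$) is genuinely different from the paper's, and it has a gap at exactly the step you flag as ``the hard part''. Your justification for the Choquard Brezis--Lieb identity is that the level bound $c_\varepsilon<\frac{4-\mu}{8}(1+C_s)$ keeps $\|v_n\|_\varepsilon^2$ strictly below the Moser threshold $\frac{4-\mu}{4}(1+C_s)$. This does not follow. The only a priori bound available from $(f_3)$ is
\[
\Bigl(\tfrac12-\tfrac{1}{2\theta}\Bigr)\|v_n\|_\varepsilon^2
\le \mathcal{J}_\varepsilon(v_n)-\tfrac{1}{2\theta}\langle\mathcal{J}_\varepsilon'(v_n),v_n\rangle
\le c_\varepsilon+o(1),
\]
i.e. $\|v_n\|_\varepsilon^2\le\frac{2\theta}{\theta-1}\,c_\varepsilon+o(1)$, and since $(f_3)$ only assumes $\theta>1$ the factor $\frac{2\theta}{\theta-1}$ can be arbitrarily large; the threshold $\frac{4-\mu}{4}(1+C_s)=2\cdot\frac{4-\mu}{8}(1+C_s)$ is reached only when $\theta\ge 2$. (In the paper the subcritical bound \eqref{eq4.22} is derived only \emph{after} vanishing has been assumed and the Choquard energy shown to vanish, so that $\|u_n\|_a^2\to 2c_a$; it is not available for a general PS sequence.) There is also a circularity: you need the splitting to know $\mathcal{J}_\varepsilon(v_n)\to c_\varepsilon-\mathcal{J}_\varepsilon(u_\varepsilon)$ and $\langle\mathcal{J}_\varepsilon'(v_n),v_n\rangle=o(1)$, but the equi-integrability you invoke for the splitting presupposes the very norm bound you are trying to extract from that energy identity. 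A secondary weak point is the limit problem at infinity: since $(V)$ only gives $\liminf_{|x|\to\infty}V=V_\infty$, the translated potentials $V(\varepsilon x+\varepsilon y_n)$ need not converge to a constant $V^*$, so ``$\tilde v$ is a critical point of $\mathcal{I}_{V^*}$'' requires more care than a Fatou argument on the potential term alone.

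The paper is structured precisely to avoid the Brezis--Lieb decomposition of the exponential-critical Choquard energy. To show $u_\varepsilon\ne 0$ it argues by contradiction: assuming $u_\varepsilon=0$, it rules out vanishing directly (as you do), translates the non-vanishing mass, and then projects the \emph{original} sequence $u_n$ onto the Nehari manifold $\mathcal{N}_{V_\infty}$ via Lemma~\ref{Lem2.7}, proving through the monotonicity consequences of $(f_3)$--$(f_4)$ and the identities \eqref{eq5.4}--\eqref{eq5.5} that the projection parameter satisfies $t_n\to 1$; this yields $c_{V_\infty}\le c_\varepsilon$, contradicting Lemma~\ref{Lem5.2}. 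Strong convergence is then obtained without any splitting: writing $\mathcal{J}_\varepsilon(u_n)-\frac{1}{2\theta}\langle\mathcal{J}_\varepsilon'(u_n),u_n\rangle=A_n+B_n$ with both pieces nonnegative, weak lower semicontinuity on $A_n$ and Fatou on $B_n$ combined with $A+B=c_\varepsilon=\lim(A_n+B_n)$ force $A_n\to A$, i.e. $\|u_n\|_\varepsilon\to\|u_\varepsilon\|_\varepsilon$. If you want to keep your profile-decomposition route you would need either to assume $\theta\ge 2$ (strengthening $(f_3)$) or to supply an independent proof of the Choquard Brezis--Lieb identity in the Trudinger--Moser regime; as written, the argument does not close.
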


\begin{proof}
By Lemma~\ref{Lem5.2} we have
\[
c_{\varepsilon}<\frac{4-\mu}{8}(1+C_s)
\quad\text{for all }\varepsilon\in(0,\varepsilon_0).
\]
Arguing as in Lemma~\ref{Lem4.2}, $\{u_n\}$ is bounded in $W_\varepsilon$.
Hence, up to a subsequence, there exists $u_\varepsilon\in W_\varepsilon$ such that
\[
u_n\rightharpoonup u_\varepsilon\quad\text{in }W_\varepsilon,
\qquad
u_n\to u_\varepsilon\quad\text{in }L^p_{\mathrm{loc}}(\R^2)\ (p\ge1),
\qquad
u_n(x)\to u_\varepsilon(x)\ \text{a.e. in }\R^2,
\]
and $\mathcal J'_\varepsilon(u_\varepsilon)=0$. Moreover, by Lemma~\ref{Lem4.2} we may assume $u_n\ge0$ for all $n$.

We claim that $u_\varepsilon\not=0$. Assume by contradiction that $u_\varepsilon=0$. Suppose that for some $r>0$,
\[
\lim_{n\to\infty}\sup_{y\in\R^{2}}
\int_{B_{r}(y)}|u_{n}|^{2}\,dx=0.
\]
Then Lions' lemma implies $u_n\to0$ in $L^p(\R^2)$ for every $p\in(2,\infty)$.
Using $(f_1)$--$(f_2)$ and the Trudinger--Moser control as in Lemma~\ref{Lem2.5}, one obtains
\[
\|F(u_n)\|_{\frac{4}{4-\mu}}\to0,
\qquad
\|f(u_n)u_n\|_{\frac{4}{4-\mu}}\to0.
\]
By Lemma~\ref{Lem2.3},
\[
\int_{\R^2}\Bigl(\frac{1}{|x|^\mu}*F(u_n)\Bigr)F(u_n)\,dx\to0,
\qquad
\int_{\R^2}\Bigl(\frac{1}{|x|^\mu}*F(u_n)\Bigr)f(u_n)u_n\,dx\to0.
\]
Since $\langle \mathcal J'_\varepsilon(u_n),u_n\rangle=o_n(1)$, it follows that
\[
\|u_n\|_\varepsilon^2
=
\int_{\R^2}\Bigl(\frac{1}{|x|^\mu}*F(u_n)\Bigr)f(u_n)u_n\,dx+o_n(1)\to0,
\]
hence $\mathcal J_\varepsilon(u_n)\to0$, contradicting $c_\varepsilon>0$. Therefore vanishing cannot occur.

Thus, by Lions' lemma, there exist $r>0$, $\delta>0$ and $\{y_n\}\subset\R^2$ such that
\[
\liminf_{n\to\infty}\int_{B_r(y_n)}|u_n|^2\,dx\ge \delta.
\]
Since $u_n\to0$ in $L^2_{\mathrm{loc}}(\R^2)$, we have $|y_n|\to\infty$.
Define $\tilde u_n(x):=u_n(x+y_n)$. Then
\[
\liminf_{n\to\infty}\int_{B_r(0)}|\tilde u_n|^2\,dx\ge \delta,
\]
and, up to a subsequence,
\[
\tilde u_n\rightharpoonup \tilde u \ \text{in }H^1(\R^2),
\qquad
\tilde u_n\to \tilde u\ \text{in }L^p_{\mathrm{loc}}(\R^2)\ (p\ge1),
\qquad
\tilde u_n(x)\to \tilde u(x)\ \text{a.e.},
\]
with $\tilde u\not=0$ and $\tilde u\ge0$.
Choose $\zeta>0$ and a measurable set $E\subset\R^2$ with $|E|>0$ such that $\tilde u\ge \zeta$ a.e. on $E$.
Choose $M>0$ so that $E_M:=\{x\in E:\ \zeta\le \tilde u(x)\le M\}$ has positive measure, and choose a bounded measurable subset $\Omega\subset E_M$ with $|\Omega|>0$.

For each $n$, by Lemma~\ref{Lem2.7} applied to $\mathcal I_{V_\infty}$, there exists a unique $t_n>0$ such that
\[
t_n u_n\in \mathcal N_{V_\infty},
\qquad
\bigl\langle \mathcal I'_{V_\infty}(t_nu_n),t_nu_n\bigr\rangle=0.
\]
Arguing as in Lemma~\ref{Lem5.2}, using $(f_3)$, one shows that $\{t_n\}$ is bounded and bounded away from $0$; hence, up to a subsequence,
\[
t_n\to t_0>0.
\]

Subtracting the Nehari identity for $\mathcal I_{V_\infty}(t_nu_n)$ and the identity $\langle \mathcal J'_\varepsilon(u_n),u_n\rangle=o_n(1)$ yields
\begin{equation}\label{eq5.4}
\int_{\R^2}(V_\infty-V(\varepsilon x))u_n^2\,dx+o_n(1)
=
\iint_{\R^2\times\R^2}
\frac{F(t_n\tilde u_n(y))f(t_n\tilde u_n(x))t_n\tilde u_n(x)-t_n^2F(\tilde u_n(y))f(\tilde u_n(x))\tilde u_n(x)}{t_n^2|x-y|^\mu}\,dx\,dy.
\end{equation}
Fix $\eta>0$. By $(V)$ there exists $R>0$ such that \begin{equation*}
V(\varepsilon x) \geq V_{\infty}-\eta, \text { for any }|x| \geq R .
\end{equation*}
Using the fact that $u_n \rightarrow 0$ in $L^2\left(B_R(0)\right)$, we conclude that
\begin{equation}\label{eq5.5}
	\begin{aligned}
&\int_{\mathbb{R}^2}\left(V_{\infty}-V(\varepsilon x)\right)  \left|u_n\right|^2 \mathrm{~d} x \\& \leq \int_{B_R(0)}\left(V_{\infty}-V_0\right) \left|u_n\right|^2 \mathrm{~d} x+\eta \int_{B_R^c(0)} \left|u_n\right|^2 \mathrm{~d} x \\
& \leq \eta C+o_n(1)
\end{aligned}
\end{equation}
where $C=\sup _{n \in \mathbb{N}}\left|u_n\right|_2^2$.

We claim that $t_0=1$. Assume first that $t_0>1$. Then $t_n>1$ for $n$ large.
For $a>0$ define $H(a)=F(a)/a$. By $(f_3)$ we have $a f(a)-F(a)\ge (\theta-1)F(a)\ge0$, hence $H'(a)\ge0$ and $H$ is nondecreasing on $(0,\infty)$.
Using also $(f_4)$, for $t>1$ and $a,b>0$ one has $F(ta)\ge tF(a)$ and $f(tb)\ge f(b)$, hence
\[
F(ta)f(tb)\,tb-t^2F(a)f(b)\,b\ge0.
\]
Therefore, for $n$ large, the integrand
\[
G_n(x,y)
=
\frac{F(t_n\tilde u_n(y))f(t_n\tilde u_n(x))t_n\tilde u_n(x)-t_n^2F(\tilde u_n(y))f(\tilde u_n(x))\tilde u_n(x)}{t_n^2|x-y|^\mu}
\]
is nonnegative on $\R^2\times\R^2$.
Moreover, for a.e. $(x,y)\in\Omega\times\Omega$, since $\tilde u(x),\tilde u(y)\in[\zeta,M]$, the continuity of $F,f$ and $t_n\to t_0>1$ imply
\[
G_n(x,y)\to
G(x,y)
:=
\frac{F(t_0\tilde u(y))f(t_0\tilde u(x))t_0\tilde u(x)-t_0^2F(\tilde u(y))f(\tilde u(x))\tilde u(x)}{t_0^2|x-y|^\mu},
\]
and $G(x,y)>0$ for a.e. $(x,y)\in\Omega\times\Omega$. Since $\mu<2$ and $\Omega$ is bounded, $G\in L^1(\Omega\times\Omega)$.
By Fatou's lemma,
\[
\liminf_{n\to\infty}\iint_{\Omega\times\Omega}G_n\,dx\,dy
\ge
\iint_{\Omega\times\Omega}G\,dx\,dy
>0.
\]
On the other hand, since $G_n\ge0$ and \eqref{eq5.4},\eqref{eq5.5} holds,
\[
0<\iint_{\Omega\times\Omega}G\,dx\,dy
\le \eta C,
\]
 since the arbitrariness of $\eta$, which is a contradiction. Thus $t_0>1$ is impossible.

Assume next that $t_0<1$. Then $t_n<1$ for $n$ large.
By $(f_4)$, for all $a,b>0$ and $t\in(0,1)$,
\[
F(ta)\le tF(a),
\qquad
f(tb)\,tb\le t f(b)\,b,
\]
hence
\[
F(ta)\bigl(f(tb)\,tb-F(tb)\bigr)
\le t^2 F(a)\bigl(f(b)\,b-F(b)\bigr).
\]
Using
\[
\mathcal I_{V_\infty}(v)-\frac12\langle \mathcal I'_{V_\infty}(v),v\rangle
=
\frac12\int_{\R^2}\Bigl(\frac1{|x|^\mu}*F(v)\Bigr)\bigl(f(v)v-F(v)\bigr)\,dx,
\]
and $\langle \mathcal I'_{V_\infty}(t_nu_n),t_nu_n\rangle=0$, we obtain for $n$ large
\[
\mathcal I_{V_\infty}(t_nu_n)
=
\frac12\int_{\R^2}\Bigl(\frac1{|x|^\mu}*F(t_nu_n)\Bigr)\bigl(f(t_nu_n)\,t_nu_n-F(t_nu_n)\bigr)\,dx
\le
\mathcal J_\varepsilon(u_n)-\frac12\langle \mathcal J'_\varepsilon(u_n),u_n\rangle+o_n(1).
\]
Letting $n\to\infty$ yields $c_{V_\infty}\le c_\varepsilon$, contradicting Lemma~\ref{Lem5.2}.
Thus $t_0<1$ is impossible, and we conclude $t_0=1$.

Using $t_n\to1$, the $C^1$ regularity of $\mathcal J_\varepsilon$ and $\mathcal J'_\varepsilon(u_n)\to0$, we have
\[
\mathcal J_\varepsilon(t_nu_n)=\mathcal J_\varepsilon(u_n)+o_n(1)=c_\varepsilon+o_n(1).
\]
Moreover,
\[
\mathcal I_{V_\infty}(t_nu_n)
=
\mathcal J_\varepsilon(t_nu_n)
+\frac{t_n^2}{2}\int_{\R^2}(V_\infty-V(\varepsilon x))u_n^2\,dx
\leq
c_\varepsilon+\eta C+o_n(1),
\]
where we used \eqref{eq5.5}. Since $\mathcal I_{V_\infty}(t_nu_n)\ge c_{V_\infty}$ and the arbitrariness of $\eta$, letting $n\to\infty$ gives $c_{V_\infty}\le c_\varepsilon$, again a contradiction.
This shows that our assumption $u_\varepsilon=0$ is false, hence $u_\varepsilon\not=0$.

Finally, we show $u_n\to u_\varepsilon$ strongly in $W_\varepsilon$.
Since $u_\varepsilon\ne0$ and $\mathcal J'_\varepsilon(u_\varepsilon)=0$, we have $u_\varepsilon\in\mathcal N_\varepsilon$ and therefore
\[
\mathcal J_\varepsilon(u_\varepsilon)\ge c_\varepsilon.
\]
On the other hand, by $(f_3)$,
\[
\mathcal J_\varepsilon(u)-\frac{1}{2\theta}\langle \mathcal J'_\varepsilon(u),u\rangle
=
\left(\frac12-\frac{1}{2\theta}\right)\|u\|_\varepsilon^2
+
\int_{\R^2}\Bigl(\frac1{|x|^\mu}*F(u)\Bigr)
\left(\frac{1}{2\theta} f(u)u-\frac12F(u)\right)\,dx,
\]
and the integral term is nonnegative. Hence, using weak lower semicontinuity and Fatou's lemma,
\[
\mathcal J_\varepsilon(u_\varepsilon)
=
\mathcal J_\varepsilon(u_\varepsilon)-\frac{1}{2\theta}\langle \mathcal J'_\varepsilon(u_\varepsilon),u_\varepsilon\rangle
\le
\liminf_{n\to\infty}
\left(\mathcal J_\varepsilon(u_n)-\frac{1}{2\theta}\langle \mathcal J'_\varepsilon(u_n),u_n\rangle\right)
=
\lim_{n\to\infty}\mathcal J_\varepsilon(u_n)
=
c_\varepsilon.
\]
Therefore $\mathcal J_\varepsilon(u_\varepsilon)=c_\varepsilon$.

Set
\[
A_n=\left(\frac12-\frac{1}{2\theta}\right)\|u_n\|_\varepsilon^2,
\qquad
B_n=\int_{\R^2}\Bigl(\frac1{|x|^\mu}*F(u_n)\Bigr)
\left(\frac{1}{2\theta} f(u_n)u_n-\frac12F(u_n)\right)\,dx,
\]
and define $A,B$ analogously with $u_n$ replaced by $u_\varepsilon$.
Then $A_n\ge0$, $B_n\ge0$, $A\ge0$, $B\ge0$, and
\[
A_n+B_n
=
\mathcal J_\varepsilon(u_n)-\frac{1}{2\theta}\langle \mathcal J'_\varepsilon(u_n),u_n\rangle
=
c_\varepsilon+o_n(1),
\qquad
A+B
=
\mathcal J_\varepsilon(u_\varepsilon)
=
c_\varepsilon.
\]
Moreover, weak lower semicontinuity gives $\liminf\limits_{n\to\infty}A_n\ge A$, and Fatou's lemma gives $\liminf\limits_{n\to\infty}B_n\ge B$.
Hence
\[
c_\varepsilon
=
\liminf_{n\to\infty}(A_n+B_n)
\ge
\liminf_{n\to\infty}A_n+\liminf_{n\to\infty}B_n
\ge
A+B
=
c_\varepsilon,
\]
so all inequalities are equalities and in particular $\lim\limits_{n\to\infty}A_n=A$. Therefore
\[
\|u_n\|_\varepsilon\to \|u_\varepsilon\|_\varepsilon.
\]
Since $u_n\rightharpoonup u_\varepsilon$ in the Hilbert space $W_\varepsilon$, this implies
\[
u_n\to u_\varepsilon \quad\text{strongly in }W_\varepsilon.
\]
\end{proof}

\begin{Cor}\label{Cor5.4}
For $\varepsilon>0$ sufficiently small, the minimax value $c_\varepsilon$ is achieved at some $u_\varepsilon\in W_\varepsilon$. Consequently, problem \eqref{eq2.1} admits a positive least energy solution $u_\varepsilon$ for all $\varepsilon>0$ small.
\end{Cor}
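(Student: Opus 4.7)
The plan is to combine the mountain pass construction with the compactness result Lemma~\ref{Lem5.3}, and then upgrade the resulting critical point to a positive ground state using the maximum principle argument already invoked in Theorem~\ref{Thm4.3}. Fix $\varepsilon\in(0,\varepsilon_{0})$, where $\varepsilon_{0}$ is given by Lemma~\ref{Lem5.2}. First I would apply Lemma~\ref{Lem2.6} and the mountain pass theorem (without the $(PS)$ condition) to produce a sequence $\{u_{n}\}\subset W_{\varepsilon}$ with $\mathcal{J}_{\varepsilon}(u_{n})\to c_{\varepsilon}$ and $\mathcal{J}_{\varepsilon}'(u_{n})\to 0$ in $W_{\varepsilon}^{*}$. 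Since $\varepsilon<\varepsilon_{0}$, Lemma~\ref{Lem5.2} gives $c_{\varepsilon}<\frac{4-\mu}{8}(1+C_{s})$, so Lemma~\ref{Lem5.3} applies and yields (along a subsequence) $u_{n}\to u_{\varepsilon}$ strongly in $W_{\varepsilon}$.

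Next I would pass to the limit in the energy and the derivative. By the continuity of $\mathcal{J}_{\varepsilon}$ and $\mathcal{J}_{\varepsilon}'$ on $W_{\varepsilon}$, strong convergence gives $\mathcal{J}_{\varepsilon}(u_{\varepsilon})=c_{\varepsilon}$ and $\mathcal{J}_{\varepsilon}'(u_{\varepsilon})=0$. In particular $u_{\varepsilon}\ne 0$, because $c_{\varepsilon}\ge \alpha_{1}>0$ by Lemma~\ref{Lem2.6}(i), so $u_{\varepsilon}\in\mathcal{N}_{\varepsilon}$. Combining with Lemma~\ref{Lem2.8}, which identifies $c_{\varepsilon}=c_{\varepsilon}^{*}=\inf_{\mathcal{N}_{\varepsilon}}\mathcal{J}_{\varepsilon}$, this shows that $u_{\varepsilon}$ realizes the ground state level.

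It remains to establish positivity. Recall that in the proof of Lemma~\ref{Lem5.3}, by replacing $u_{n}$ with $u_{n}^{+}$ (allowed since the nonlinear part of $\mathcal{J}_{\varepsilon}$ is unchanged under this substitution and $\|u_{n}^{-}\|_{\varepsilon}\to 0$, exactly as in step (ii) of Lemma~\ref{Lem4.2}), one may assume $u_{n}\ge 0$; passing to the strong limit yields $u_{\varepsilon}\ge 0$ a.e. Since $u_{\varepsilon}\not\equiv 0$ and satisfies the Euler--Lagrange equation
\[
-\Delta u_{\varepsilon}+(-\Delta)^{s}u_{\varepsilon}+V(\varepsilon x)u_{\varepsilon}
=\Bigl(\tfrac{1}{|x|^{\mu}}*F(u_{\varepsilon})\Bigr)f(u_{\varepsilon})
\quad\text{in }\R^{2},
\]
the strong maximum principle for the mixed operator $-\Delta+(-\Delta)^{s}$ (invoked also in Theorem~\ref{Thm4.3}, see \cite{2024DipierroAMS}) applies to the nonnegative supersolution $u_{\varepsilon}$ of $(-\Delta+(-\Delta)^{s}+V(\varepsilon x))u=0$, yielding $u_{\varepsilon}>0$ in $\R^{2}$. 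This produces a positive least energy solution of \eqref{eq2.1}.

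The main technical point is not in the mountain-pass step itself but in ensuring that all ingredients line up: the strict bound $c_{\varepsilon}<\tfrac{4-\mu}{8}(1+C_{s})$ for small $\varepsilon$ via Lemma~\ref{Lem5.2} is what activates Lemma~\ref{Lem5.3}, and the possibility of replacing $u_{n}$ by $u_{n}^{+}$ is what guarantees the sign of the limit before invoking the maximum principle. Once these two points are in place, the corollary is essentially a one-line consequence of Lemma~\ref{Lem5.3} together with Lemma~\ref{Lem2.8} and the variational identification of $c_{\varepsilon}$ with the Nehari infimum.
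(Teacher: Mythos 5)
Your proposal is correct and follows essentially the same route as the paper: mountain pass plus Lemma~\ref{Lem5.2} to activate the compactness of Lemma~\ref{Lem5.3}, strong convergence to pass to the limit in $\mathcal{J}_{\varepsilon}$ and $\mathcal{J}_{\varepsilon}'$, the $u_{n}^{+}$ replacement for the sign, and the strong maximum principle for positivity. The only cosmetic difference is that you invoke Lemma~\ref{Lem2.8} to identify $c_{\varepsilon}$ with $\inf_{\mathcal{N}_{\varepsilon}}\mathcal{J}_{\varepsilon}$, whereas the paper re-derives the inequality $c_{\varepsilon}\le\mathcal{J}_{\varepsilon}(w)$ for an arbitrary nontrivial critical point $w$ via an explicit path; these are equivalent.
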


\begin{proof}
Fix $\varepsilon\in(0,\varepsilon_0)$. By Lemma~\ref{Lem2.8} there exists a $(PS)_{c_\varepsilon}$ sequence $\{u_n\}\subset W_\varepsilon$ for $\mathcal J_\varepsilon$ such that
\[
\mathcal J_\varepsilon(u_n)\to c_\varepsilon,
\qquad
\mathcal J_\varepsilon'(u_n)\to0\quad\text{in }W_\varepsilon^{-1}.
\]
By Lemma~\ref{Lem5.3}, up to a subsequence, there exists $u_\varepsilon\in W_\varepsilon$ such that
\[
u_n\to u_\varepsilon\quad\text{strongly in }W_\varepsilon.
\]
In particular,
\[
\mathcal J_\varepsilon(u_\varepsilon)=\lim_{n\to\infty}\mathcal J_\varepsilon(u_n)=c_\varepsilon,
\qquad
\mathcal J_\varepsilon'(u_\varepsilon)=0,
\]
so $c_\varepsilon$ is achieved by the critical point $u_\varepsilon$.

We may assume $u_n\ge0$ for all $n$. Indeed, set $u_n^-=\max\{-u_n,0\}$ and use the convention $f(t)=0$ for $t\le0$. Testing $\langle \mathcal J_\varepsilon'(u_n),\cdot\rangle$ with $u_n^-$ and arguing as in the standard sign estimate yields $\|u_n^-\|_\varepsilon\to0$. Hence $\{u_n^+\}$ is still a $(PS)_{c_\varepsilon}$ sequence and $u_n^+\to u_\varepsilon$ in $W_\varepsilon$, which implies $u_\varepsilon\ge0$ a.e. in $\R^2$.

Since $\mathcal J_\varepsilon(u_\varepsilon)=c_\varepsilon>0$, we have $u_\varepsilon\not=0$. Therefore $u_\varepsilon$ is a nontrivial nonnegative weak solution of \eqref{eq2.1}. By the strong maximum principle for mixed local--nonlocal operators (see, for example, \cite{2024DipierroAMS} and references therein), it follows that
\[
u_\varepsilon>0\quad\text{in }\R^2.
\]

Finally, let $w\in W_\varepsilon$ be any nontrivial critical point of $\mathcal J_\varepsilon$. Then $w\in\mathcal N_\varepsilon$ and, by $(f_3)$--$(f_5)$, there exists $T>1$ such that $\mathcal J_\varepsilon(Tw)<0$. Hence the path $\gamma(t)=tTw$ belongs to $\Gamma_\varepsilon$ and
\[
c_\varepsilon
\le \max_{t\in[0,1]}\mathcal J_\varepsilon(\gamma(t))
= \max_{s\in[0,T]}\mathcal J_\varepsilon(sw)
= \mathcal J_\varepsilon(w),
\]
where the last equality follows from $\langle \mathcal J_\varepsilon'(w),w\rangle=0$. Therefore $c_\varepsilon$ is the least energy among all nontrivial critical points, and $u_\varepsilon$ is a positive least energy solution.
\end{proof}

\section{Concentration phenomena}
\begin{Lem}\label{Lem6.1}
	Suppose that $(f_1)$ and $(f_2)$ hold. If $h\in H^1(\R^2)$, then
	\[
	\Bigl(\frac{1}{|x|^\mu}*F(h)\Bigr)\in L^\infty(\R^2).
	\]
\end{Lem}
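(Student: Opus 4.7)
\textbf{Proof plan for Lemma~\ref{Lem6.1}.} The plan is to estimate $(|x|^{-\mu} \ast F(h))(x)$ uniformly in $x$ by splitting the Riesz kernel into a singular (near) part and a decaying (far) part, and then applying Young's convolution inequality on each piece. Concretely, I would write
\[
\frac{1}{|x|^{\mu}}
= \frac{\chi_{B_1(0)}(x)}{|x|^{\mu}} + \frac{\chi_{B_1^{c}(0)}(x)}{|x|^{\mu}}
=: K_1(x) + K_2(x),
\]
and use that $K_1 \in L^{p_1}(\R^2)$ for every $p_1 \in [1, 2/\mu)$ and $K_2 \in L^{p_2}(\R^2)$ for every $p_2 \in (2/\mu, \infty]$. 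By Young's inequality,
\[
\|K_i \ast F(h)\|_{\infty} \le \|K_i\|_{p_i}\,\|F(h)\|_{p_i'},
\qquad
\tfrac{1}{p_i}+\tfrac{1}{p_i'}=1,\ i=1,2,
\]
so the task reduces to producing exponents $p_1'>\frac{2}{2-\mu}$ and $p_2'<\frac{2}{2-\mu}$ for which $F(h)$ lies in $L^{p_1'}(\R^2)\cap L^{p_2'}(\R^2)$.

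Next, I would use $(f_1)$ and $(f_2)$ in the standard combined form: for any $\eta>0$, any $q>1$, any $k>0$ there is $C_\eta>0$ with
\[
|F(t)| \le \eta\,|t|^{\frac{4-\mu}{2}} + C_\eta\,|t|^{q}\bigl(\mathrm{e}^{k t^{2}}-1\bigr),
\qquad t\in\R,
\]
exactly as in \eqref{eq2.2}. For any $s\ge 1$, the first term contributes $\||h|^{(4-\mu)/2}\|_{s}^{s}=\|h\|_{s(4-\mu)/2}^{s(4-\mu)/2}$; this is finite whenever $s(4-\mu)/2\ge 2$, i.e.\ $s\ge \frac{4}{4-\mu}$, by the Sobolev embedding $H^{1}(\R^{2})\hookrightarrow L^{m}(\R^{2})$ for $m\ge 2$. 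For the exponential term I would use the pointwise bound $(\mathrm{e}^{\tau}-1)^{s}\le \mathrm{e}^{s\tau}-1$ for $\tau\ge 0$ and Hölder with a large auxiliary exponent $r>1$ to split:
\[
\int_{\R^{2}}|h|^{qs}\bigl(\mathrm{e}^{skh^{2}}-1\bigr)\,dx
\le \Bigl(\int_{\R^{2}}|h|^{qsr'}\,dx\Bigr)^{1/r'}
\Bigl(\int_{\R^{2}}\bigl(\mathrm{e}^{rskh^{2}}-1\bigr)\,dx\Bigr)^{1/r}.
\]
Since $h$ is \emph{fixed}, Proposition~\ref{pro2.2} guarantees the second factor is finite for every $\alpha>0$, and the first is finite provided $qsr'\ge 2$. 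Hence $F(h)\in L^{s}(\R^{2})$ for every $s\ge \frac{4}{4-\mu}$.

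Finally I would pick the exponents. Choose $p_2'=\frac{4}{4-\mu}$, which indeed satisfies $p_2'<\frac{2}{2-\mu}$ (equivalent to $\mu>0$), and choose any $p_1'>\frac{2}{2-\mu}$; both lie in the range $[\frac{4}{4-\mu},\infty)$, so $F(h)\in L^{p_1'}\cap L^{p_2'}$. Combining the two Young estimates yields the uniform bound
\[
\Bigl\|\frac{1}{|x|^{\mu}}\ast F(h)\Bigr\|_{\infty}
\le \|K_1\|_{p_1}\,\|F(h)\|_{p_1'}+\|K_2\|_{p_2}\,\|F(h)\|_{p_2'}<\infty,
\]
which is the desired conclusion. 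The only subtle point is matching the integrability exponents of the kernel pieces with the integrability of $F(h)$; no uniformity in $h$ is needed, so the critical Trudinger--Moser regime does not cause any obstruction and the exponential term is handled by the finiteness statement in Proposition~\ref{pro2.2} rather than its sharp supremum version.
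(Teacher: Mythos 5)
Your proposal is correct and follows essentially the same route as the paper: both split the Riesz kernel at radius $1$ into a singular near part and a bounded far part, and both reduce the problem to showing $F(h)\in L^{p}(\R^{2})$ for suitable $p$ via the growth estimate \eqref{eq2.2} together with the finiteness part of Proposition~\ref{pro2.2} (no sharp Trudinger--Moser supremum needed). The only cosmetic differences are that you phrase the two pieces via Young's inequality with $F(h)\in L^{4/(4-\mu)}$ for the far part, whereas the paper bounds the far kernel by $1$ and uses $F(h)\in L^{1}$; and note that in \eqref{eq2.2} the constant $k$ is fixed by $(f_2)$ rather than arbitrary, which does not affect your argument.
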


\begin{proof}
	We split the proof into two parts: (i) $F(h)\in L^1(\R^2)\cap L^p(\R^2)$ for some
	$p>\frac{2}{2-\mu}$; (ii) a convolution estimate giving $L^\infty$.

	Step 1.	Fix $\eta>0$. By $(f_1)$ and $(f_2)$ there exist $q>1$, $k>1$ and $C_\eta>0$ such that
	\[
	|f(t)|
	\le \eta |t|^{\frac{2-\mu}{2}} + C_\eta |t|^{q-1}\bigl(e^{k4\pi t^2}-1\bigr),
	\qquad \forall\,t\in\R .
	\]
	Integrating on $[0,t]$ and using $F(t)=\int_0^t f(\tau)\,d\tau$, we obtain
	\[
	|F(t)|
	\le C\eta |t|^{\frac{4-\mu}{2}} + C C_\eta |t|^{q}\bigl(e^{k4\pi t^2}-1\bigr),
	\qquad \forall\,t\in\R ,
	\]
	for some constant $C>0$ independent of $t$. Hence, for any $p\ge1$,
	\[
	|F(h)|^p
	\le C\Bigl(|h|^{\frac{p(4-\mu)}{2}} + |h|^{pq}\bigl(e^{k4\pi h^2}-1\bigr)^p\Bigr).
	\]
	Using the elementary inequality $s^m\le C_{m,\sigma}\,(e^{\sigma s^2}-1)$ for $s\ge0$,
	with $\sigma>0$ small, we can absorb the polynomial factor into an exponential.
	Therefore there exist $\alpha_p>0$ and $C_p>0$ such that
	\[
	|F(h(x))|^p \le C_p\bigl(e^{\alpha_p h(x)^2}-1\bigr)\quad\text{a.e. in }\R^2.
	\]
	By Proposition~\ref{pro2.2}, $\int_{\R^2}(e^{\alpha_p h^2}-1)\,dx<\infty$ for every
	$\alpha_p>0$ because $h\in H^1(\R^2)$. Hence $F(h)\in L^p(\R^2)$ for every $p\ge1$.
	In particular, $F(h)\in L^1(\R^2)$ and we may choose $p>\frac{2}{2-\mu}$.

	Step 2. Fix $p>\frac{2}{2-\mu}$ and let $p'=\frac{p}{p-1}$. For any $x\in\R^2$, split
	\[
	\Bigl|\Bigl(\frac{1}{|x|^\mu}*F(h)\Bigr)(x)\Bigr|
	\le \int_{|x-y|\le1}\frac{|F(h(y))|}{|x-y|^\mu}\,dy
	+\int_{|x-y|>1}\frac{|F(h(y))|}{|x-y|^\mu}\,dy
	=: I_1(x)+I_2(x).
	\]
	For $I_2$, since $|x-y|^{-\mu}\le1$ on $\{|x-y|>1\}$, we have
	\[
	I_2(x)\le \int_{\R^2}|F(h(y))|\,dy=\|F(h)\|_{L^1(\R^2)}.
	\]
	For $I_1$, by H\"older's inequality,
	\[
	I_1(x)\le \|F(h)\|_{L^p(\R^2)}\,
	\Bigl(\int_{|z|\le1}|z|^{-\mu p'}\,dz\Bigr)^{\frac{1}{p'}}.
	\]
	The integral $\int_{|z|\le1}|z|^{-\mu p'}\,dz$ is finite provided $\mu p'<2$,
	that is,
	\[
	p' < \frac{2}{\mu}\quad\Longleftrightarrow\quad p>\frac{2}{2-\mu},
	\]
	which is exactly our choice of $p$. Hence $I_1(x)\le C\,\|F(h)\|_{L^p(\R^2)}$
	with a constant $C$ independent of $x$.
	Combining the estimates for $I_1$ and $I_2$ yields
	\[
	\sup_{x\in\R^2}\Bigl|\Bigl(\frac{1}{|x|^\mu}*F(h)\Bigr)(x)\Bigr|
	\le C\|F(h)\|_{L^p(\R^2)}+\|F(h)\|_{L^1(\R^2)}<\infty.
	\]
	Therefore $\frac{1}{|x|^\mu}*F(h)\in L^\infty(\R^2)$.
\end{proof}

\begin{Lem}\label{Lem6.2}
	Let $\varepsilon_n\to0$ and let $\{u_n\}\subset\mathcal{N}_{\varepsilon_n}$ satisfy
	\[
	\lim_{n\to\infty}\mathcal{J}_{\varepsilon_n}(u_n)=c_{V_0}.
	\]
	Then there exists a sequence $\{\tilde y_n\}\subset\R^2$ such that the translated sequence
	\[
	\tilde u_n(x)=u_n(x+\tilde y_n)
	\]
	has a convergent subsequence in $W_{V_0}$. Moreover, up to a subsequence,
	\[
	y_n=\varepsilon_n\tilde y_n\to y\in M.
	\]
\end{Lem}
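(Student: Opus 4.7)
The plan is to follow the standard concentration--compactness strategy for semiclassical problems, combining the non-vanishing argument of Lemma~\ref{Lem5.3} with the strict inequality $c_{V_0}<c_{V_\infty}$ from Lemma~\ref{Lem5.2}. The main steps will be, in order: boundedness of $\{u_n\}$, non-vanishing and extraction of translations $\tilde y_n$, boundedness of the rescaled shifts $y_n=\varepsilon_n\tilde y_n$, identification of the limit $y\in M$, and finally strong convergence of the translated sequence in $W_{V_0}=H^1(\R^2)$.

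First I would exploit $(f_3)$ via the standard identity
\[
c_{V_0}+o_n(1)=\mathcal{J}_{\varepsilon_n}(u_n)-\frac{1}{2\theta}\langle\mathcal{J}'_{\varepsilon_n}(u_n),u_n\rangle\ge\Bigl(\tfrac12-\tfrac{1}{2\theta}\Bigr)\|u_n\|_{\varepsilon_n}^{2},
\]
which, together with $u_n\in\mathcal{N}_{\varepsilon_n}$, gives boundedness of $\{u_n\}$ in the $\varepsilon_n$-norm and hence in $H^1(\R^2)$ because $V(\varepsilon_n x)\ge V_0>0$. Next I would rule out vanishing: if $\sup_y\int_{B_r(y)}u_n^{2}\,dx\to 0$, Lions' lemma and the Trudinger--Moser machinery of Lemmas~\ref{Lem2.3}--\ref{Lem2.5} (exactly as in the vanishing argument of Lemma~\ref{Lem5.3}) would force $\int(|x|^{-\mu}\ast F(u_n))f(u_n)u_n\,dx\to 0$, hence $\|u_n\|_{\varepsilon_n}\to 0$, contradicting the uniform Nehari bound of Lemma~\ref{Lem5.1}. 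This yields a sequence $\{\tilde y_n\}\subset\R^{2}$ and numbers $r,\delta>0$ with $\liminf_n\int_{B_r(\tilde y_n)}u_n^{2}\,dx\ge\delta$; setting $\tilde u_n(x)=u_n(x+\tilde y_n)$, a subsequence satisfies $\tilde u_n\rightharpoonup\tilde u\neq 0$ in $H^1(\R^2)$, and we may assume $\tilde u_n\ge 0$.

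The delicate point, which I expect to be the main obstacle, is to show that $\{y_n\}=\{\varepsilon_n\tilde y_n\}$ is bounded. Following the blueprint of Lemma~\ref{Lem5.3}, assume to the contrary that $|y_n|\to\infty$; then $(V)$ yields $V(\varepsilon_n x+y_n)\to V_\infty$ locally. Let $t_n>0$ be the unique number (given by Lemma~\ref{Lem2.7} applied to $\mathcal{I}_{V_\infty}$) such that $t_n\tilde u_n\in\mathcal{N}_{V_\infty}$. Using $(f_3)$--$(f_4)$ together with the monotonicity/Nehari comparison of Lemma~\ref{Lem5.3}, I would first show that $\{t_n\}$ is bounded and bounded away from $0$, and then $t_n\to 1$. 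A Fatou-type estimate would then give
\[
c_{V_\infty}\le\liminf_{n\to\infty}\mathcal{I}_{V_\infty}(t_n\tilde u_n)\le\liminf_{n\to\infty}\mathcal{J}_{\varepsilon_n}(t_n u_n)\le\liminf_{n\to\infty}\mathcal{J}_{\varepsilon_n}(u_n)=c_{V_0},
\]
contradicting Lemma~\ref{Lem5.2}. Hence $y_n\to y\in\R^{2}$ along a subsequence.

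To conclude, I would rerun the same rescaling-and-Fatou argument with $V_\infty$ replaced by $V(y)\ge V_0$, obtaining $c_{V(y)}\le c_{V_0}$; combined with the strict monotonicity $a_1<a_2\Rightarrow c_{a_1}<c_{a_2}$ already established inside Lemma~\ref{Lem5.2}, this forces $V(y)=V_0$, i.e., $y\in M$. Finally, the strong convergence $\tilde u_n\to\tilde u$ in $W_{V_0}$ follows from the same Fatou chain used at the end of Lemma~\ref{Lem5.3}: all inequalities collapse to equalities, which yields $\|\tilde u_n\|_{V_0}\to\|\tilde u\|_{V_0}$, and the Hilbert-space geometry upgrades the weak convergence to strong.
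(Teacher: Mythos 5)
Your opening steps (boundedness via $(f_3)$, exclusion of vanishing via Lions' lemma and the uniform Nehari bound of Lemma~\ref{Lem5.1}, extraction of $\tilde y_n$ and of a nonzero weak limit $\tilde u$) coincide with the paper's and are fine. The trouble is the \emph{order} of the remaining steps: you prove boundedness of $y_n=\varepsilon_n\tilde y_n$ before strong convergence, and both of those steps, as you sketch them, have gaps.

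First, in the boundedness argument the inequality $\mathcal I_{V_\infty}(t_n\tilde u_n)\le \mathcal J_{\varepsilon_n}(t_n u_n)+o(1)$ (and likewise the proof that $t_n\to1$) requires
\[
\int_{\R^2}\bigl(V_\infty-V(\varepsilon_n x+y_n)\bigr)\tilde u_n^2\,dx\le C\eta+o_n(1).
\]
When $|y_n|\to\infty$ one has $V(\varepsilon_n x+y_n)\ge V_\infty-\eta$ only on fixed balls $B_{R_0}(0)$; on $B_{R_0}^c(0)$ the only available bound is $V\ge V_0<V_\infty$, so you must show that $\int_{B_{R_0}^c}\tilde u_n^2\,dx$ is uniformly small. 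Weak convergence does not give this tightness. In Lemma~\ref{Lem5.3} the analogous estimate \eqref{eq5.5} works because there the weak limit is assumed to be zero, so $u_n\to0$ in $L^2$ precisely on the (bounded) region where $V(\varepsilon x)$ can be far below $V_\infty$; here the geometry is reversed and that trick is unavailable. Second, your concluding ``Fatou chain'' needs $\mathcal I_{V_0}(\tilde u)\ge c_{V_0}$, i.e.\ $\tilde u\in\mathcal N_{V_0}$, hence $\mathcal I_{V_0}'(\tilde u)=0$; but the hypothesis only gives $u_n\in\mathcal N_{\varepsilon_n}$ with converging energies, not a Palais--Smale sequence, so the weak limit need not be a critical point. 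The paper resolves both issues by projecting $\tilde u_n$ onto $\mathcal N_{V_0}$ (using $V\ge V_0$ to see that $w_n=t_n\tilde u_n$ is a minimizing sequence for $\mathcal I_{V_0}$ on $\mathcal N_{V_0}$), upgrading it to a Palais--Smale sequence via Ekeland's variational principle and a Lagrange multiplier argument, obtaining strong convergence from the compactness scheme of Theorem~\ref{Thm4.3}, and only \emph{then} proving boundedness of $y_n$ and $y\in M$, where the strong $L^2$ convergence supplies exactly the tail control your argument is missing. You should reorganize your proof along these lines.
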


\begin{proof}
	Since $u_n\in\mathcal N_{\varepsilon_n}$, we have
	\[
	\langle \mathcal{J}'_{\varepsilon_n}(u_n),u_n\rangle=0.
	\]
	Together with $\mathcal{J}_{\varepsilon_n}(u_n)\to c_{V_0}$, arguing as in Lemma~\ref{Lem4.2}
	we deduce that $\{u_n\}$ is bounded in $W_{\varepsilon_n}$. In particular, $\{u_n\}$ is
	bounded in $H^1(\R^2)$. By $(V)$, the norms $\|\cdot\|_{\varepsilon_n}$ and $\|\cdot\|_{V_0}$
	are equivalent uniformly in $n$, hence $\{u_n\}$ is bounded in $W_{V_0}$ as well.

	We claim that $\{u_n\}$ does not vanish. If it vanished, then $u_n\to0$ in $L^p(\R^2)$ for
	every $p>2$, and using $(f_1)$, $(f_2)$ and Lemma~\ref{Lem2.3} one would obtain
	$\mathcal{J}_{\varepsilon_n}(u_n)\to0$, contradicting $c_{V_0}>0$.
	Therefore there exist $r>0$, $\delta>0$ and a sequence $\{\tilde y_n\}\subset\R^2$ such that
	\[
	\liminf_{n\to\infty}\int_{B_r(\tilde y_n)}|u_n|^2\,dx\ge\delta.
	\]
	Define $\tilde u_n(x)=u_n(x+\tilde y_n)$. Then $\{\tilde u_n\}$ is bounded in $W_{V_0}$ and
	\[
	\liminf_{n\to\infty}\int_{B_r(0)}|\tilde u_n|^2\,dx\ge\delta.
	\]
	Passing to a subsequence,
	\[
	\tilde u_n\rightharpoonup \tilde u \quad \text{in } W_{V_0},
	\qquad
	\tilde u_n\to \tilde u \quad \text{in } L^p_{\mathrm{loc}}(\R^2),\ p\ge1,
	\qquad
	\tilde u_n(x)\to \tilde u(x) \ \text{a.e. in }\R^2,
	\]
	and the lower bound on $B_r(0)$ gives $\tilde u\not\equiv0$.

	Set $y_n=\varepsilon_n\tilde y_n$. Introduce the translated functional
	\[
	\widetilde{\mathcal J}_n(v)
	=\frac12\int_{\R^2}|\nabla v|^2\,dx
	+\frac14\iint_{\R^2\times\R^2}\frac{|v(x)-v(y)|^2}{|x-y|^{2+2s}}\,dx\,dy
	+\frac12\int_{\R^2}V(\varepsilon_n x+y_n)v^2\,dx
	-\frac12\int_{\R^2}\Bigl(\frac1{|x|^\mu}*F(v)\Bigr)F(v)\,dx.
	\]
	A change of variables shows
	\[
	\widetilde{\mathcal J}_n(\tilde u_n)=\mathcal{J}_{\varepsilon_n}(u_n),
	\qquad
	\langle \widetilde{\mathcal J}_n'(\tilde u_n),\tilde u_n\rangle=0.
	\]

	Let $t_n>0$ be the unique number such that $w_n=t_n\tilde u_n\in\mathcal N_{V_0}$.
	Then
	\[
	c_{V_0}\le \mathcal I_{V_0}(w_n),
	\qquad
	\mathcal I_{V_0}(w_n)\le \widetilde{\mathcal J}_n(w_n).
	\]
	Moreover, since $\tilde u_n$ lies on the Nehari manifold of $\widetilde{\mathcal J}_n$,
	Lemma~\ref{Lem2.7} yields that $t\mapsto \widetilde{\mathcal J}_n(t\tilde u_n)$ attains its
	maximum at $t=1$, hence
	\[
	\widetilde{\mathcal J}_n(w_n)=\widetilde{\mathcal J}_n(t_n\tilde u_n)
	\le \widetilde{\mathcal J}_n(\tilde u_n)
	=\mathcal{J}_{\varepsilon_n}(u_n)
	=c_{V_0}+o(1).
	\]
	Consequently,
	\[
	c_{V_0}\le \mathcal I_{V_0}(w_n)\le c_{V_0}+o(1),
	\qquad
	\mathcal I_{V_0}(w_n)\to c_{V_0}.
	\]

	There exists $\alpha>0$ such that $\|u\|_{V_0}\ge\alpha$ for all $u\in\mathcal N_{V_0}$, hence
	$\|w_n\|_{V_0}\ge\alpha$ and $\{w_n\}$ is bounded in $W_{V_0}$.
Since $\mathcal I_{V_0}(w_n)\to c_{V_0}=\inf_{\mathcal N_{V_0}}\mathcal I_{V_0}$ and $\{w_n\}\subset\mathcal N_{V_0}$,
by Ekeland's variational principle applied to $\mathcal I_{V_0}$ restricted to $\mathcal N_{V_0}$,
there exists $\{v_n\}\subset\mathcal N_{V_0}$ such that
\[
\mathcal I_{V_0}(v_n)\to c_{V_0},\qquad
\|v_n-w_n\|_{V_0}\to0,\qquad
\|(\mathcal I_{V_0}|_{\mathcal N_{V_0}})'(v_n)\|_{(W_{V_0})^{-1}}\to0.
\]
Replacing $w_n$ by $v_n$ (still denoted by $w_n$), we may assume that
\[
\mathcal I_{V_0}(w_n)\to c_{V_0},
\qquad
\|(\mathcal I_{V_0}|_{\mathcal N_{V_0}})'(w_n)\|_{(W_{V_0})^{-1}}\to0.
\]
Let $G(u)=\langle \mathcal I_{V_0}'(u),u\rangle$. Then $\mathcal N_{V_0}=\{u\neq0:\,G(u)=0\}$.
By the Lagrange multiplier rule, there exists $\lambda_n\in\mathbb{R}$ such that
\[
\mathcal I_{V_0}'(w_n)=\lambda_n G'(w_n)+o(1)\quad\text{in }(W_{V_0})^{-1}.
\]
Testing by $w_n$ and using $G(w_n)=0$ we get
\[
0=\langle \mathcal I_{V_0}'(w_n),w_n\rangle
=\lambda_n\langle G'(w_n),w_n\rangle+o(1).
\]
Moreover, since $w_n\in\mathcal N_{V_0}$, the map $t\mapsto \mathcal I_{V_0}(t w_n)$ attains its unique maximum at $t=1$,
hence $\langle G'(w_n),w_n\rangle=h_{w_n}''(1)<0$.
Therefore $\lambda_n\to0$ and consequently
\[
\mathcal I_{V_0}'(w_n)\to0\quad\text{in }(W_{V_0})^{-1}.
\]

In addition, $t_n$ is bounded and bounded away from $0$.
Indeed, boundedness follows from $w_n=t_n\tilde u_n$ and the boundedness of $\{w_n\},\{\tilde u_n\}$,
while if $t_n\to0$ then $w_n\to0$ in $W_{V_0}$ and $\mathcal I_{V_0}(w_n)\to0$, contradicting $\mathcal I_{V_0}(w_n)\to c_{V_0}>0$.
Hence there exists $c_0>0$ such that $t_n\ge c_0$ for all $n$, and thus
\[
\int_{B_r(0)}|w_n|^2\,dx=t_n^2\int_{B_r(0)}|\tilde u_n|^2\,dx\ge c_0^2\delta.
\]
	Using the same compactness argument as in the proof of Theorem~\ref{Thm4.3}, we obtain, up to a subsequence,
	\[
	w_n\to w \quad \text{strongly in } W_{V_0},
	\]
	for some $w\in W_{V_0}$ with $w\not\equiv0$. Since $t_n$ is bounded and bounded away from $0$,
	we may assume $t_n\to t_0>0$, and therefore
	\[
	\tilde u_n=\frac{w_n}{t_n}\to \frac{w}{t_0} \quad \text{strongly in } W_{V_0}.
	\]
	This proves that $\{\tilde u_n\}$ has a convergent subsequence in $W_{V_0}$.

	It remains to show that $\{y_n\}$ is bounded and its limit lies in $M$.
	Assume by contradiction that $|y_n|\to\infty$. Fix $\eta>0$. By $(V)$ there exists $R>0$
	such that $V(z)\ge V_\infty-\eta$ for all $|z|\ge R$.
	Choose $R_0>0$ so large that $\int_{B_{R_0}^c(0)}w^2\,dx\le \eta$.
	For $n$ large, $|y_n|\ge 2R$ and $\varepsilon_n R_0\le R$, hence $|\varepsilon_n x+y_n|\ge R$
	for all $|x|\le R_0$. Therefore
	\[
	\int_{\R^2}V(\varepsilon_n x+y_n)w_n^2\,dx
	\ge (V_\infty-\eta)\int_{B_{R_0}(0)}w_n^2\,dx + V_0\int_{B_{R_0}^c(0)}w_n^2\,dx.
	\]
	Letting $n\to\infty$ and using $w_n\to w$ in $L^2(\R^2)$ we obtain
	\[
	\liminf_{n\to\infty}\int_{\R^2}V(\varepsilon_n x+y_n)w_n^2\,dx
	\ge V_\infty\int_{\R^2}w^2\,dx - C\eta,
	\]
	for a constant $C$ independent of $\eta$. Since $\eta$ is arbitrary,
	\[
	\liminf_{n\to\infty}\int_{\R^2}V(\varepsilon_n x+y_n)w_n^2\,dx
	\ge V_\infty\int_{\R^2}w^2\,dx.
	\]
	Consequently,
	\[
	\liminf_{n\to\infty}\widetilde{\mathcal J}_n(w_n)
	\ge \mathcal I_{V_\infty}(w)
	=\mathcal I_{V_0}(w) + \frac12(V_\infty-V_0)\int_{\R^2}w^2\,dx
	> c_{V_0}.
	\]
	On the other hand,
	\[
	\widetilde{\mathcal J}_n(w_n)\le \widetilde{\mathcal J}_n(\tilde u_n)
	=\mathcal{J}_{\varepsilon_n}(u_n)\to c_{V_0},
	\]
	a contradiction. Hence $\{y_n\}$ is bounded.

	Up to a subsequence, $y_n\to y\in\R^2$. If $y\notin M$, then $V(y)>V_0$.
	Since $y_n\to y$ and $\varepsilon_n\to0$, we have $V(\varepsilon_n x+y_n)\to V(y)$ uniformly
	on $B_{R_0}(0)$ for every fixed $R_0>0$. Using again $w_n\to w$ in $L^2(\R^2)$, we obtain
	\[
	\lim_{n\to\infty}\int_{\R^2}V(\varepsilon_n x+y_n)w_n^2\,dx
	= V(y)\int_{\R^2}w^2\,dx,
	\]
	hence
	\[
	\lim_{n\to\infty}\widetilde{\mathcal J}_n(w_n)
	= \mathcal I_{V(y)}(w)
	=\mathcal I_{V_0}(w) + \frac12(V(y)-V_0)\int_{\R^2}w^2\,dx
	> c_{V_0},
	\]
	which contradicts $\widetilde{\mathcal J}_n(w_n)\le \mathcal{J}_{\varepsilon_n}(u_n)\to c_{V_0}$.
	Therefore $V(y)=V_0$, namely $y\in M$.
\end{proof}

Let $\varepsilon_n \to 0$ as $n\to+\infty$ and let $v_n\in W_{\varepsilon_n}$ be the
positive ground state solution of
\[
-\Delta u+(-\Delta)^s u+V(\varepsilon_n x)u
=\Bigl(\frac{1}{|x|^\mu}*F(u)\Bigr)f(u)
\quad\text{in }\R^2,
\]
given by Corollary~\ref{Cor5.4}. Then
\[
\mathcal{J}_{\varepsilon_n}(v_n)=c_{\varepsilon_n}
\quad\text{and}\quad
\langle\mathcal{J}_{\varepsilon_n}'(v_n),v_n\rangle=0,
\]
that is, $v_n\in\mathcal{N}_{\varepsilon_n}$ for every $n$.
By Lemma~\ref{Lem5.2} we know that
\[
\mathcal{J}_{\varepsilon_n}(v_n)=c_{\varepsilon_n}\longrightarrow c_{V_0}
\quad\text{as }n\to+\infty.
\]

Hence we can apply Lemma~\ref{Lem6.2} with $u_n=v_n$ and obtain a sequence
$\{\tilde y_n\}\subset\R^2$ such that
\[
\tilde v_n(x)=v_n(x+\tilde y_n)
\]
solves
\[
-\Delta u+(-\Delta)^s u+V_n(x)u
=\Bigl(\frac{1}{|x|^\mu}*F(u)\Bigr)f(u)
\quad\text{in }\R^2,
\]
where
\[
V_n(x):=V(\varepsilon_n x+\varepsilon_n\tilde y_n),
\]
and such that, up to a subsequence,
\[
\tilde v_n\to\tilde v\quad\text{in }W_{V_0},\qquad
y_n:=\varepsilon_n\tilde y_n\to y\in M .
\]

Since $\tilde v_n\to\tilde v$ in $W_{V_0}$, in particular $\tilde v_n\to\tilde v$ in $H^1(\R^2)$.
We may extract a subsequence such that
\[
\|\tilde v_n-\tilde v\|_{H^1(\R^2)}\le 2^{-n}
\quad\text{for all }n\in\N.
\]
Define
\[
h(x)=|\tilde v(x)|+\sum_{n=1}^\infty |\tilde v_n(x)-\tilde v(x)|.
\]
Then $h\in H^1(\R^2)$. Moreover, for every $n\in\N$,
\begin{equation}\label{eq6.1}
|\tilde v_n(x)|\le h(x)
\quad\text{for a.e. }x\in\R^2.
\end{equation}

\begin{Lem}\label{Lem6.3}
	Assume that {\rm(V)} and $(f_1)$–$(f_6)$ hold. Then there exists $C>0$ such that
	\[
	\|\tilde v_n\|_{L^\infty(\R^2)} \le C \quad\text{for all }n\in\N^+.
	\]
	Furthermore,
	\[
	\lim_{|x|\to+\infty}\tilde v_n(x)=0
	\quad\text{uniformly in }n\in\N^+.
	\]
\end{Lem}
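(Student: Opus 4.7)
The plan is to first extract a uniform $L^\infty$ bound from the equation for $\tilde v_n$ by bootstrapping the nonlinearity with the Trudinger--Moser inequality, and then to upgrade $L^2$-smallness at infinity (which comes for free from the dominating function $h$) into uniform pointwise smallness at infinity via a local-type estimate on unit balls.

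The key preliminary observation is that $|\tilde v_n(x)|\le h(x)$ by \eqref{eq6.1} with $h\in H^1(\mathbb{R}^2)$, so Lemma~\ref{Lem6.1} applied to $h$ (together with the monotonicity $0\le F(\tilde v_n)\le F(h)$ which follows from $(f_3)$) gives that the Choquard coefficient $K_n(x):=(|x|^{-\mu}* F(\tilde v_n))(x)$ is bounded in $L^\infty(\mathbb{R}^2)$ uniformly in $n$. Hence $\tilde v_n$ is a nonnegative weak solution of
\[
-\Delta \tilde v_n+(-\Delta)^s \tilde v_n+V_0 \tilde v_n\ \le\ C\,f(\tilde v_n)\qquad\text{in }\mathbb{R}^2,
\]
with $C$ independent of $n$. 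The next step is a Moser iteration: fix $L>0$ and $\beta\ge 1$ and test the equation with $\varphi_L=\tilde v_n\,v_L^{2(\beta-1)}$, $v_L=\min\{\tilde v_n,L\}$. The local gradient term produces the standard Caccioppoli estimate for $w_L:=\tilde v_n v_L^{\beta-1}$, and the pointwise inequality $(a-b)(a v_L^{2(\beta-1)}(a)-b v_L^{2(\beta-1)}(b))\ge 0$ ensures that the fractional contribution is nonnegative and may be discarded. On the right-hand side, using $(f_1)$--$(f_2)$ one writes $f(\tilde v_n)\tilde v_n\le \eta\tilde v_n^{2}+C_\eta\,\tilde v_n^{q}(e^{k\tilde v_n^2}-1)$ and estimates the exponential factor in $L^r$ by Proposition~\ref{pro2.2}, using that $\{\tilde v_n\}$ is bounded in $H^1$ so that Lemma~\ref{Lem2.5} applies uniformly in $n$. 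This yields a Moser-type recursion $\|\tilde v_n\|_{2\beta\chi}\le (C\beta)^{1/\beta}\|\tilde v_n\|_{2\beta}$ for some $\chi>1$ coming from Sobolev embedding on $\mathbb{R}^2$, and iterating $\beta=\chi^k$ gives the uniform bound $\|\tilde v_n\|_\infty\le C$.

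For the decay at infinity, I would first note that from $|\tilde v_n|\le h\in H^1(\mathbb{R}^2)$ and dominated convergence,
\[
\lim_{R\to\infty}\sup_{n}\int_{|x|>R}|\tilde v_n|^2\,dx=0,
\]
which combined with the uniform $L^\infty$ bound from Step~1 yields the same statement for every $p\in[2,\infty)$. To pass from uniform $L^p$-smallness at infinity to uniform $L^\infty$-smallness, I would run a localized De Giorgi/Moser iteration on unit balls $B_1(x_0)$ with $|x_0|$ large: testing the equation with $(\tilde v_n-k)_+\phi^2$ for a cutoff $\phi\in C_c^\infty(B_2(x_0))$ and using that the right-hand side is controlled by $C\tilde v_n$ on $\{\tilde v_n\le\|\tilde v_n\|_\infty\}$, one obtains
\[
\|\tilde v_n\|_{L^\infty(B_{1/2}(x_0))}\ \le\ C\Bigl(\int_{B_2(x_0)}|\tilde v_n|^2\,dx\Bigr)^{1/2}+\mathrm{Tail}_n(x_0),
\]
where the nonlocal tail is
\[
\mathrm{Tail}_n(x_0)=\int_{|y-x_0|\ge 2}\frac{|\tilde v_n(y)|}{|y-x_0|^{2+2s}}\,dy.
\]
Splitting $\mathrm{Tail}_n$ over $\{|y|\le|x_0|/2\}$ and $\{|y|>|x_0|/2\}$ and using the uniform $L^\infty$ bound on the far piece together with $L^2$-smallness at infinity on the near piece, one shows $\mathrm{Tail}_n(x_0)\to 0$ uniformly in $n$ as $|x_0|\to\infty$. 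Combined with the $L^2$-smallness, this yields $\tilde v_n(x)\to 0$ as $|x|\to\infty$ uniformly in $n$, completing the proof.

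The main obstacle I expect is keeping the constants in the Moser iteration uniform in $n$ despite the exponentially critical right-hand side: the only way the Trudinger--Moser step \eqref{eq2.2} is uniform is through the uniform $H^1$ bound on $\{\tilde v_n\}$, so one must carefully pick the parameters $r,k,q$ in the Hölder splitting so that $r\cdot k\cdot \sup_n\|\tilde v_n\|_{H^1}^2<4\pi$ at the initial iteration step, and then ensure this threshold is preserved (or no longer needed, only integrability of a fixed exponential) in subsequent iterations. The secondary technical point is the handling of the nonlocal tail $\mathrm{Tail}_n(x_0)$ in the localized estimate, which is where the prior global $L^\infty$ bound from Part~1 is essential.
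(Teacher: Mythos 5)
Your overall strategy matches the paper's (uniform $L^\infty$ control of the Choquard coefficient via Lemma~\ref{Lem6.1} and the dominating function $h$, then a Moser iteration for the global bound, then a localized estimate for the decay), and several of your variations are legitimate: discarding the nonnegative fractional contribution instead of exploiting it through the $H^s$ embedding as in \eqref{eq6.8}, obtaining uniform $L^2$-smallness at infinity directly from $|\tilde v_n|\le h$ by dominated convergence rather than from strong $L^p$ convergence, and replacing the paper's exterior-domain cutoff iteration by unit-ball estimates with a nonlocal tail. However, there is one genuine gap, precisely at the point you yourself flag as the main obstacle, and your proposed resolution does not work.

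The issue is the uniformity in $n$ of the Trudinger--Moser control of the exponential factor $e^{k\tilde v_n^2}-1$ inside the iteration. You propose to secure it by choosing the parameters so that $r\,k\,\sup_n\|\tilde v_n\|_{H^1}^2<4\pi$ and then invoking the uniform part of Proposition~\ref{pro2.2} (via Lemma~\ref{Lem2.5}). But $k$ is not at your disposal: by the critical growth condition $(f_2)$, any splitting $|f(t)t|\le \eta t^2+C_\eta|t|^q(e^{kt^2}-1)$ forces the exponent $k$ to exceed $4\pi$ (the paper writes it as $k\cdot 4\pi$ with $k>1$). Since $r>1$ as well, your condition would require $\sup_n\|\tilde v_n\|_{H^1}^2<1$, and the only available bound is $\|\tilde v_n\|_{V_0}^2\le \frac{2\theta}{\theta-1}c_{V_0}+o(1)$, which need not be small (in particular for $\theta$ close to $1$). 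So the initial iteration step cannot be made uniform by parameter tuning. The correct device — which you already use for the Choquard coefficient but not here — is the pointwise domination \eqref{eq6.1}: since $|\tilde v_n|\le h$ with $h\in H^1(\R^2)$ \emph{fixed}, one has $e^{k4\pi\tilde v_n^2}-1\le e^{k4\pi h^2}-1$, and the \emph{first} (non-uniform) part of Proposition~\ref{pro2.2} applied to the single function $h$ gives $\int_{\R^2}(e^{k4\pi h^2}-1)^m\,dx<\infty$ for every $m$, a fixed finite constant independent of $n$. This is exactly estimate \eqref{eq6.6}--\eqref{eq6.7} in the paper and is what makes all constants in the iteration $n$-independent. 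A secondary, minor point: the linearization $W_nf(\tilde v_n)\le C\tilde v_n$ in your localized step is not literally valid near $t=0$ because $(f_1)$ only gives $f(t)=o(t^{(2-\mu)/2})$ with $(2-\mu)/2<1$; you should keep the sublinear term $\varepsilon t^{(2-\mu)/2}$ (harmless for the De Giorgi estimate) rather than absorbing it into $Ct$.
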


\begin{proof}
Set
\[
W_n(x)=\Bigl(\frac{1}{|x|^\mu}*F(\tilde v_n)\Bigr)(x).
\]
Since each $\tilde v_n\ge0$ and $F$ is increasing on $[0,\infty)$, by \eqref{eq6.1} we have
$0\le F(\tilde v_n)\le F(h)$ a.e. in $\R^2$. Hence
\[
0\le W_n(x)\le W(x),
\qquad
W(x):=\Bigl(\frac{1}{|x|^\mu}*F(h)\Bigr)(x).
\]
By Lemma~\ref{Lem6.1}, $W\in L^\infty(\R^2)$, therefore $\{W_n\}$ is bounded in
$L^\infty(\R^2)$.

Following \cite{1960MoserCPAM}, fix $R>0$ and $0<r\le \min\{1,R/2\}$, and take
$\eta\in C^\infty(\R^2)$ such that
\[
\eta(x)=0 \ \text{if }|x|\le R-r,
\qquad
\eta(x)=1 \ \text{if }|x|\ge R,
\qquad
|\nabla\eta|\le \frac{2}{r}.
\]
For $l>0$, set
\[
\tilde v_{n,l}(x)=
\begin{cases}
\tilde v_n(x), & \tilde v_n(x)\le l,\\
l, & \tilde v_n(x)\ge l,
\end{cases}
\]
and for $\gamma>1$ define
\[
z_{n,l}(x)=\eta(x)^2\,\tilde v_{n,l}(x)^{2(\gamma-1)}\,\tilde v_n(x),
\qquad
w_{n,l}(x)=\eta(x)\,\tilde v_{n,l}(x)^{\gamma-1}\,\tilde v_n(x).
\]

We use the standard truncation inequality in the Moser iteration scheme
(see, e.g., \cite{1960MoserCPAM,2009AlvesJDF}): there exists $C>0$ independent of
$n,l,\gamma$ such that for all $x,y\in\R^2$,
\begin{equation}\label{eq6.2}
\begin{aligned}
&\frac{1}{\gamma^{2}}\bigl(w_{n,l}(x)-w_{n,l}(y)\bigr)^{2}\\
&\le
\bigl(\tilde v_n(x)-\tilde v_n(y)\bigr)\bigl(z_{n,l}(x)-z_{n,l}(y)\bigr)
+
C\,\bigl(\eta(x)-\eta(y)\bigr)^{2}
\bigl(\tilde v_n(x)^{2}\tilde v_{n,l}(x)^{2(\gamma-1)}
+\tilde v_n(y)^{2}\tilde v_{n,l}(y)^{2(\gamma-1)}\bigr).    
\end{aligned}
\end{equation}

Taking $z_{n,l}$ as a test function in the equation satisfied by $\tilde v_n$, we obtain
\begin{equation}\label{eq6.3}
\begin{aligned}
&\int_{\R^2}\nabla\tilde v_n\nabla z_{n,l}\,dx
+\frac{1}{2}\int_{\R^2}\int_{\R^2}
\frac{\bigl(\tilde v_n(x)-\tilde v_n(y)\bigr)\bigl(z_{n,l}(x)-z_{n,l}(y)\bigr)}
{|x-y|^{2+2s}}\,dx\,dy \\
&\qquad
+\int_{\R^2} V_n(x)\,\tilde v_n z_{n,l}\,dx
=\int_{\R^2} W_n(x)\,f(\tilde v_n)\,z_{n,l}\,dx.
\end{aligned}
\end{equation}

By \eqref{eq6.2}, dividing by $|x-y|^{2+2s}$ and integrating, we get
\[
\frac{1}{2}\int_{\R^2}\int_{\R^2}
\frac{\bigl(\tilde v_n(x)-\tilde v_n(y)\bigr)\bigl(z_{n,l}(x)-z_{n,l}(y)\bigr)}
{|x-y|^{2+2s}}\,dx\,dy
\ge
\frac{1}{\gamma^{2}}[w_{n,l}]_{s}^{2}
- C\,\mathcal E_{n,l},
\]
where
\[
\mathcal E_{n,l}=\frac{1}{2}
\int_{\R^2}\int_{\R^2}
\frac{\bigl(\eta(x)-\eta(y)\bigr)^{2}}{|x-y|^{2+2s}}
\bigl(\tilde v_n(x)^{2}\tilde v_{n,l}(x)^{2(\gamma-1)}
+\tilde v_n(y)^{2}\tilde v_{n,l}(y)^{2(\gamma-1)}\bigr)\,dx\,dy.
\]
Using $|\eta(x)-\eta(y)|\le \|\nabla\eta\|_\infty|x-y|$ and $r\le1$, a standard estimate yields
\[
\mathcal E_{n,l}
\le C\,\|\nabla\eta\|_\infty^{2}
\int_{\R^2}\tilde v_n^{2}\tilde v_{n,l}^{2(\gamma-1)}\,dx
\le C\int_{\R^2}\tilde v_n^{2}\tilde v_{n,l}^{2(\gamma-1)}|\nabla\eta|^{2}\,dx,
\]
with $C$ independent of $n,l,\gamma$.

Using also $V_n\ge V_0$, from \eqref{eq6.3} we infer
\begin{equation}\label{eq6.4}
\int_{\R^2}\nabla\tilde v_n\nabla z_{n,l}\,dx
+\frac{1}{\gamma^{2}}[w_{n,l}]_s^2
+V_0\int_{\R^2} \tilde v_n z_{n,l}\,dx
\le \int_{\R^2} W_n(x)\,f(\tilde v_n)\,z_{n,l}\,dx
+C\int_{\R^2}\tilde v_n^{2}\tilde v_{n,l}^{2(\gamma-1)}|\nabla\eta|^{2}\,dx.
\end{equation}

Expanding $\nabla z_{n,l}$, discarding the nonpositive truncation contribution, and applying Young's inequality, we obtain
\begin{equation}\label{eq6.5}
\begin{aligned}
&\int_{\R^2}\eta^2\tilde v_{n,l}^{2(\gamma-1)}|\nabla\tilde v_n|^2\,dx
+\frac{1}{\gamma^{2}}[w_{n,l}]_s^2
+V_0\int_{\R^2}\eta^2\tilde v_{n,l}^{2(\gamma-1)}\tilde v_n^2\,dx\\
&\quad\le
C\int_{\R^2}\eta^2\tilde v_{n,l}^{2(\gamma-1)}\tilde v_n\,|f(\tilde v_n)|
\,\tilde v_n\,dx
+C\int_{\R^2}\tilde v_n^2\tilde v_{n,l}^{2(\gamma-1)}|\nabla\eta|^2\,dx .
\end{aligned}
\end{equation}

We now estimate the nonlinear term. Since $W_n\le \|W\|_\infty$, arguing as in Lemma~\ref{Lem5.1} from $(f_1)$--$(f_2)$, for any $\varepsilon>0$ there exist
$q>2$, $k>1$ and $C_\varepsilon>0$ such that
\[
|f(t)t|
\le \varepsilon t^2 + C_\varepsilon |t|^q\bigl(e^{k4\pi t^2}-1\bigr)
\quad\text{for all }t\in\R.
\]
Therefore,
\begin{equation}\label{eq6.6}
\begin{aligned}
|W_n(x) f(\tilde v_n)\tilde v_n|
&\le \|W\|_\infty |f(\tilde v_n)\tilde v_n| \\
&\le \varepsilon \tilde v_n^2
+ C_\varepsilon |\tilde v_n|^q\bigl(e^{k4\pi \tilde v_n^2}-1\bigr)
\le \varepsilon \tilde v_n^2
+ C_\varepsilon |\tilde v_n|^q\bigl(e^{k4\pi h^2}-1\bigr).
\end{aligned}
\end{equation}
Substituting \eqref{eq6.6} into \eqref{eq6.5}, choosing $\varepsilon>0$ small, and absorbing the $\tilde v_n^2$ term into the left-hand side, we arrive at
\begin{equation}\label{eq6.7}
\begin{aligned}
&\int_{\R^2}\eta^2\tilde v_{n,l}^{2(\gamma-1)}|\nabla\tilde v_n|^2\,dx
+\frac{1}{\gamma^{2}}[w_{n,l}]_s^2
+V_0\int_{\R^2}\eta^2\tilde v_{n,l}^{2(\gamma-1)}\tilde v_n^2\,dx\\
&\quad\le
C\int_{\R^2}\eta^2\tilde v_{n,l}^{2(\gamma-1)}|\tilde v_n|^q
\bigl(e^{k4\pi h^2}-1\bigr)\,dx
+C\int_{\R^2}\tilde v_n^2\tilde v_{n,l}^{2(\gamma-1)}|\nabla\eta|^2\,dx.
\end{aligned}
\end{equation}

Using the Sobolev embedding $H^s(\R^2)\subset L^p(\R^2)$ for $p\in[2,2_s^*]$, we obtain
\begin{equation}\label{eq6.8}
\frac{1}{\gamma^{2}}[w_{n,l}]_s^2
+V_0\int_{\R^2} w_{n,l}^2\,dx
\ge \frac{C}{\gamma^2}\,\|w_{n,l}\|_{L^p(\R^2)}^2,
\end{equation}
for some $C>0$ independent of $n,l,\gamma$.

By the elementary inequality $(e^{A}-1)^m\le C_m(e^{mA}-1)$ for $A\ge0$ and all $m>1$,
Proposition~\ref{pro2.2} yields that for all $m>1$,
\[
\int_{\R^2}\bigl(e^{k4\pi h^2}-1\bigr)^m\,dx<\infty.
\]
Fix such an $m$ and set $t=\sqrt m>1$. Choose $q>\frac{2t}{t-1}$ and set
\[
\gamma=\frac{q(t-1)}{2t}>1.
\]
Then adapting the iteration argument in \cite{2009AlvesJDF} to \eqref{eq6.7},
we obtain, after letting $l\to\infty$,
\begin{equation}\label{eq6.9}
\|\tilde v_n\|_{L^\infty(|x|\ge R)}
\le C\,\|\tilde v_n\|_{L^p(|x|\ge R/2)}.
\end{equation}
A local version with cutoffs centered at $x_0\in B_R(0)$ yields
\begin{equation}\label{eq6.10}
\|\tilde v_n\|_{L^\infty(|x-x_0|\le \rho')}
\le C\,\|\tilde v_n\|_{L^p(|x-x_0|\le 2\rho')}.
\end{equation}
Using \eqref{eq6.9}, \eqref{eq6.10} and a covering argument, we deduce
\[
\|\tilde v_n\|_{L^\infty(\R^2)}\le C
\quad\text{for all }n\in\N^+,
\]
with $C$ independent of $n$.

Finally, since $\tilde v_n\to\tilde v$ in $W_{V_0}$, we have $\tilde v_n\to\tilde v$ in $L^p(\R^2)$ for some $p>2$.
Fix $\delta>0$. Choose $R>0$ such that
\[
\int_{|x|\ge R/2}|\tilde v(x)|^p\,dx<\delta,
\]
and also
\[
\int_{|x|\ge R/2}|\tilde v_n(x)|^p\,dx<\delta
\quad\text{for }n=1,2,\dots,N,
\]
for some fixed $N$.
Moreover, for all $n\ge N$,
\[
\int_{\R^2}|\tilde v_n-\tilde v|^p\,dx<\delta,
\]
hence
\[
\int_{|x|\ge R/2}|\tilde v_n(x)|^p\,dx\le C\delta
\quad\text{for all }n\in\N^+.
\]
Using \eqref{eq6.9}, we get
\[
\sup_{n\in\N^+}\|\tilde v_n\|_{L^\infty(|x|\ge R)}
\le C\,\sup_{n\in\N^+}\|\tilde v_n\|_{L^p(|x|\ge R/2)}
\le C'\delta.
\]
Since $\delta>0$ is arbitrary,
\[
\lim_{|x|\to+\infty}\tilde v_n(x)=0
\quad\text{uniformly in }n\in\N^+,
\]
and the proof is complete.
\end{proof}

\begin{Lem}\label{Lem6.4}
	There exists $\delta_0>0$ such that
	\[
	\|\tilde v_n\|_{L^\infty(\R^2)} \ge \delta_0
	\quad\text{for all }n\in\N^+.
	\]
\end{Lem}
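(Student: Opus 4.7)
The plan is to argue by contradiction. Assume there is a subsequence, still denoted $\{\tilde v_n\}$, with $\|\tilde v_n\|_{L^\infty(\R^2)}\to 0$. The strategy is to exploit the subcritical behaviour of $f$ near the origin encoded in $(f_1)$: when $\tilde v_n$ is uniformly small, the nonlocal nonlinearity is dominated by a sub-$L^2$ quantity, while the Nehari identity keeps $\|\tilde v_n\|_n$ uniformly bounded below, producing the desired contradiction.

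First I would record two uniform bounds. Since $\tilde v_n$ solves the translated equation, testing by $\tilde v_n$ itself yields the Nehari identity
\[
\|\tilde v_n\|_n^{2}
=\int_{\R^{2}}\Bigl(\frac{1}{|x|^{\mu}}*F(\tilde v_n)\Bigr)f(\tilde v_n)\,\tilde v_n\,dx,
\]
where $\|\cdot\|_n$ denotes the natural norm associated with $V_n$. Since the local and fractional energies are translation invariant and $\int_{\R^{2}}V(\varepsilon_n x)v_n^{2}\,dx=\int_{\R^{2}}V_n(x)\tilde v_n^{2}\,dx$, one has $\|\tilde v_n\|_n=\|v_n\|_{\varepsilon_n}$, so Lemma~\ref{Lem5.1} gives the uniform lower bound $\|\tilde v_n\|_n\ge\alpha>0$ with $\alpha$ independent of $n$. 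Moreover, the convergence $\tilde v_n\to\tilde v$ in $W_{V_0}$ obtained in Lemma~\ref{Lem6.2} yields $\|\tilde v_n\|_2\le M$ for some $M>0$ independent of $n$.

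Next I would invoke $(f_1)$: for every $\eta>0$ there is $\delta>0$ with $|f(t)|\le \eta\,t^{\frac{2-\mu}{2}}$ and, by integration, $|F(t)|\le \tfrac{2\eta}{4-\mu}\,t^{\frac{4-\mu}{2}}$ for $0\le t\le\delta$. Under the contradiction hypothesis, eventually $0\le\tilde v_n(x)\le\delta$ for all $x\in\R^{2}$, so these estimates hold pointwise for the solution. Combining them with Lemma~\ref{Lem2.3} applied with $t=r=\tfrac{4}{4-\mu}$ one obtains
\[
\int_{\R^{2}}\Bigl(\frac{1}{|x|^{\mu}}*F(\tilde v_n)\Bigr)f(\tilde v_n)\tilde v_n\,dx
\le C\,\eta^{2}\,\|\tilde v_n\|_2^{4-\mu}\le C\,\eta^{2}\,M^{4-\mu}.
\]
Together with the Nehari identity this gives $\alpha^{2}\le C\,\eta^{2}\,M^{4-\mu}$ for every $\eta>0$, which is impossible once $\eta$ is chosen small enough, completing the contradiction and producing $\delta_0>0$ as required. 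The main subtlety is purely bookkeeping: one has to check that the Nehari lower bound from Lemma~\ref{Lem5.1} is inherited by $\tilde v_n$ on the $V_n$-weighted space and that the $L^2$ bound on $\{\tilde v_n\}$ is independent of the translation $\tilde y_n$, both of which follow directly from translation invariance of the local and fractional parts and the strong convergence in Lemma~\ref{Lem6.2}; neither the critical Trudinger--Moser input nor $(f_5)$ plays any role here.
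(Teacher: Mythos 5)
Your argument is correct, but it takes a genuinely different route from the paper. The paper's proof is a two-line observation: the non-vanishing step inside Lemma~\ref{Lem6.2} already provides $r,\delta>0$ with $\delta\le\int_{B_r(0)}|\tilde v_n|^2\,dx$, and since trivially $\int_{B_r(0)}|\tilde v_n|^2\,dx\le|B_r|\,\|\tilde v_n\|_{L^\infty(\R^2)}^{2}$, one reads off $\delta_0=\sqrt{\delta/|B_r|}$ directly. You instead run the standard contradiction: if $\|\tilde v_n\|_{L^\infty}\to0$, then $(f_1)$ makes $F(\tilde v_n)$ and $f(\tilde v_n)\tilde v_n$ pointwise $O(\eta)\cdot\tilde v_n^{(4-\mu)/2}$, the Hardy--Littlewood--Sobolev inequality of Lemma~\ref{Lem2.3} then bounds the Choquard term by $C\eta^{2}\|\tilde v_n\|_{2}^{4-\mu}$, and the Nehari identity forces $\alpha^{2}\le C\eta^{2}M^{4-\mu}$ for every $\eta>0$, which is absurd. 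Your bookkeeping is sound: $\|\tilde v_n\|_{n}=\|v_n\|_{\varepsilon_n}\ge\alpha$ follows from translation invariance of all three energy terms together with Lemma~\ref{Lem5.1}, and the uniform $L^{2}$ bound needs only boundedness of $\{v_n\}$ (strong convergence in Lemma~\ref{Lem6.2} is more than enough). The trade-off: the paper's argument is shorter and needs nothing beyond the concentration output of Lemma~\ref{Lem6.2}, while yours is self-contained modulo Lemma~\ref{Lem5.1} and does not require recalling the specific $r,\delta$ from the non-vanishing step; both are acceptable here.
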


\begin{proof}
Recalling that
\[
0<\delta \le \int_{B_r(\tilde y_n)} |v_n|^2\,dx
\]
for some $r,\delta>0$, by the change of variables $x\mapsto x+\tilde y_n$ we get
\[
0<\delta \le \int_{B_r(0)} |\tilde v_n|^2\,dx
\le |B_r|\,\|\tilde v_n\|_{L^\infty(\R^2)}^2.
\]
Set
\[
\delta_0=\sqrt{\frac{\delta}{|B_r|}}>0.
\]
Then
\[
\|\tilde v_n\|_{L^\infty(\R^2)} \ge \delta_0
\]
for all $n\in\N^+$.
\end{proof}

\noindent\textbf{Concentration of the maximum points.}
By standard regularity for the equation satisfied by $\tilde v_n$, we have
$\tilde v_n\in C(\R^2)$ for every $n$. Since $\tilde v_n(x)\to0$ as $|x|\to\infty$
uniformly in $n$ (Lemma~\ref{Lem6.3}), each $\tilde v_n$ attains its maximum in
$\R^2$. Let $b_n\in\R^2$ be such that
\[
\tilde v_n(b_n)=\|\tilde v_n\|_{L^\infty(\R^2)}.
\]
By Lemma~\ref{Lem6.4} there exists $\delta_0>0$ such that
\[
\|\tilde v_n\|_{L^\infty(\R^2)}\ge\delta_0
\quad\text{for all }n\in\N^+.
\]
Using the uniform decay in Lemma~\ref{Lem6.3}, we can fix $R>0$ such that
\[
\sup_{n\in\N^+}\sup_{|x|\ge R}|\tilde v_n(x)|<\frac{\delta_0}{2}.
\]
Hence $b_n\in B_R(0)$ for every $n$, and the sequence $(b_n)$ is bounded in $\R^2$.

Recall that $v_n$ is the ground state solution and
\[
\tilde v_n(x)=v_n(x+\tilde y_n).
\]
Therefore the global maximum of $v_n$ is attained at
\[
z_n=b_n+\tilde y_n.
\]
Moreover,
\[
\varepsilon_n z_n
=\varepsilon_n b_n+\varepsilon_n\tilde y_n
=\varepsilon_n b_n+y_n.
\]
Since $(b_n)$ is bounded, $\varepsilon_n b_n\to0$. By Lemma~\ref{Lem6.2},
$y_n=\varepsilon_n\tilde y_n\to y\in M$. Hence
\[
\lim_{n\to\infty}\varepsilon_n z_n=y\in M.
\]
Since $V$ is continuous, then
\[
\lim_{n\to\infty}V(\varepsilon_n z_n)=V(y)=V_0.
\]

If $u_\varepsilon$ is a positive solution of problem \eqref{eq2.1}, then the rescaled function
\[
w_\varepsilon(x)=u_\varepsilon\!\left(\frac{x}{\varepsilon}\right)
\]
is a positive solution of \eqref{eq1.1}. Denote by $z_\varepsilon$ and $\eta_\varepsilon$
the global maximum points of $u_\varepsilon$ and $w_\varepsilon$, respectively. The change of variables gives
\[
\eta_\varepsilon=\varepsilon z_\varepsilon.
\]
Consequently, 
\[
\lim_{\varepsilon\to0}V(\eta_\varepsilon)
=\lim_{n\to\infty}V(\varepsilon_n z_n)
=V_0.
\]

\section*{Acknowledgments}
%We would like to thank the anonymous referee for his/her careful readings of our manuscript and the useful comments. 
Z. Yang is supported by National Natural Science Foundation of China (12301145, 12261107, 12561020) and Yunnan Fundamental Research Projects (202301AU070144, 202401AU070123).

\medskip
{\bf Data availability:}  Data sharing is not applicable to this article as no new data were created or analyzed in this study.

\medskip
{\bf Conflict of Interests:} The Author declares that there is no conflict of interest.

\bibliographystyle{plain} 
\bibliography{ref} 

\end{document}